\date{ }
\numberwithin{equation}{section}
\numberwithin{figure}{section}
\numberwithin{table}{section}
\theoremstyle{plain}
\newtheorem{thm}{Theorem}[section]
\theoremstyle{remark}
\newtheorem{rem}{Remark}[section]
\def\R{\mathbb R}
\def\hR{\mathbb R}
\def\bR{\mathbf{R}}
\def\bW{\mathbf{W}}
\def\be{\mathbf{e}}
\def\bc{\mathbf{c}}
\def\bx{\mathbf{x}}
\def\ba{\mathbf{a}}
\def\bI{\mathbf{I}}
\def\bQ{\mathbf{Q}}
\def\bG{\mathbf{G}}
\def\bw{\mathbf{w}}
\def\bV{\mathbf{V}}
\def\cL{\mathcal{L}}
\def\cG{\mathcal{G}}
\def\cB{\mathcal{B}}
\def\cN{\mathcal{N}}
\def\cE{\mathcal{E}}
\def\cD{\mathcal{D}}
\def\bU{\mathbf{U}}
\def\bV{\mathbf{V}}
\newcommand{\ben}{\begin{eqnarray}}
\newcommand{\een}{\end{eqnarray}}
\newcommand{\beq}{\begin{equation}}
\newcommand{\eeq}{\end{equation}}
\newcommand{\bea}{\begin{array}}
\newcommand{\eea}{\end{array}}
\newcommand{\bef}{\begin{figure}[H]}
\newcommand{\eef}{\end{figure}}
\newtheorem{scheme}{Scheme}[section]
\begin{document}
\title{General Numerical Framework to Derive Structure Preserving Reduced Order Models for Thermodynamically Consistent Reversible-Irreversible PDEs}
\author[J. Zhao]{
Zengyan Zhang and Jia Zhao\affil{1}\comma \corrauth}
\address{\affilnum{1}\ Department of Mathematics and Statistics, Binghamton University, Binghamton, NY, USA }
\email{ {\tt jiazhao@binghamton.edu.} (J.~Zhao)}

\begin{abstract}
In this paper, we propose a general numerical framework to derive structure-preserving reduced order models for thermodynamically consistent PDEs. Our numerical framework has two primary features: (a) a systematic way to extract reduced order models for thermodynamically consistent PDE systems while maintaining their inherent thermodynamic principles and (b) a strategic process to devise accurate, efficient, and structure-preserving numerical algorithms to solve the forehead reduced-order models. The platform's generality extends to various PDE systems governed by embedded thermodynamic laws. The proposed numerical platform is unique from several perspectives. First, it utilizes the generalized Onsager principle to transform the thermodynamically consistent PDE system into an equivalent one, where the transformed system's free energy adopts a quadratic form of the state variables. This transformation is named energy quadratization (EQ). Through EQ, we gain a novel perspective on deriving reduced order models. The reduced order models derived through our method continue to uphold the energy dissipation law. Secondly, our proposed numerical approach automatically provides numerical algorithms to discretize the reduced order models. The proposed algorithms are always linear, easy to implement and solve, and uniquely solvable. Furthermore, these algorithms inherently ensure the thermodynamic laws. In essence, our platform offers a distinctive approach to derive structure-preserving reduced-order models for a wide range of PDE systems abiding by thermodynamic principles.
\end{abstract}

\ams{}
\keywords{Model Order Reduction; Phase Field; Thermodynamically Consistent; Structure Preserving}
\maketitle

\section{Introduction}

Partial differential equation (PDE) models play an indispensable role in modeling physical phenomena in various scientific and engineering fields. They have been used to model complex systems, multidimensional phenomena, and various interdisciplinary problems. As a coarse-grained modeling approach, ensuring these PDE models remain consistent with underlying thermodynamic laws is essential, particularly when reversible and irreversible transitions occur, making the models both physically plausible and providing reliable predictions. PDEs that respect the thermodynamic laws are known as thermodynamically consistent PDEs. In particular, when the temperature fluctuation can be ignored, the thermodynamically consistent PDE system will respect laws of free energy dissipation, i.e., the free energy for the system is non-increasing in time. These models are sometimes called gradient flow models. There are many broadly used PDE models that fall into the thermodynamically consistent category, to name a few: the Allen-Cahn equation \cite{AC}, the Cahn-Hilliard equation \cite{Cahn&H1958}, the phase field crystal equation \cite{Elder2002,Gomez2012An}, the Navier-Stokes equation, and the Ericksen-Leslie model for liquid crystals \cite{beri94,OttingerBook}.

The thermodynamically consistent PDEs are usually coupled systems with nonlinearity. The analytical approach usually falls short, making the numerical approach necessary. Unfortunately, they are generally hard to solve due to the stiffness in nonlinearity and time-consuming to simulate numerically when long-time dynamics are desired. A numerical deviation from these laws can result in physically implausible solutions, defeating the purpose of the original modeling intention \cite{DVDM-book}. There is a decent size of numerical analysis community dedicated to deriving numerical algorithms that can preserve the thermodynamic structure, namely structure-preserving numerical algorithms. In particular, numerical algorithms that respect the free energy dissipation laws are usually named energy-stable algorithms \cite{Eyre1998}. If such energy stability is independent of the choices of the time step size, they are known as unconditionally energy stable \cite{Gomez2012An}. In this vein, there are many versatile research results achieved recently, to name a few: the stabilized approach \cite{ShenJ2}, the convex splitting method \cite{Eyre1998,WiseSINUMA2009,WangChengCMS2015,Wang&Wang&WiseDCDS2010}, the energy quadratization method \cite{Yang&Zhao&WangJCP2017,ZhaoIEQrelax}, the scalar auxiliary variable approach \cite{SAV-1,SAV-2,Jiang-SAV-relax}, the SVM method \cite{SVM-Gong} and Lagrangian multiplier approach \cite{LM-Cheng}.

Alternatively, a naive way to tackle the computational complexity of the thermodynamically consistent models is to embrace the reduced-order model (ROM) or model order reduction (MOR) that can reduce the spatial-temporal complexity. ROMs have gained significant attention in recent years for their capability to simplify complex PDE systems without compromising the integrity of the solution, enabling faster computational speed and more efficient simulations. The way for ROM to save computational cost is by projecting the problem from a high-dimensional system into a much lower-dimensional subspace while maintaining a small approximation error. In particular, the ROMs are usually required to conserve the properties and characteristics of the full-order model, i.e., the original model. The broadly used MOR techniques include proper orthogonal decomposition methods \cite{POD-1,POD-2}, reduced basis methods,  balancing methods, and nonlinear manifold methods, as well as many projection-based reductions.
Among these approaches, the proper orthogonal decomposition (POD) Galerkin method will be the major focus of this paper. The POD method computes an optimal subspace to fit the empirical data. The POD was first introduced to study turbulence by the fluid-dynamics community \cite{POD-1} as a way to decompose the random vector field representing turbulent fluid motion into a set of deterministic functions that each capture some portion of the total fluctuating kinetic energy in the flow \cite{POD-2}. The POD method has been widely used in computational fluid dynamics and structure analysis ever since.
In general, the empirical data is generated, and POD modes are computed during the offline stage. In the online stage, the POD-ROM is solved in real-time instead of the full model. Some special techniques are needed to handle the nonlinearity to ensure the ROM is fully independent of the full model's dimension. These techniques include the trajectory piecewise linear approximation, missing point estimation, gappy POD \cite{Gappy-POD}, empirical interpolation, and the discrete empirical interpolation method (DEIM) \cite{DEIM}. 

When it comes to developing POD-ROMs for thermodynamically consistent PDE systems, the challenge lies in not only maintaining the reduced computational complexity and controlled error to the original PDEs but also respecting their embedded thermodynamic laws. Unfortunately, a direct application of the classical POD-ROM strategies to the thermodynamically consistent PDEs will usually destroy the thermodynamic structure of the full-order model, which is problematic since these numerical solutions from the POD-ROM will violate the thermodynamic laws. Particular attention on adjusting the POD-ROM strategies for thermodynamically consistent PDEs is needed. Over the years, some seminal ideas have been proposed to design structure-preserving ROMs for the Hamiltonian (reversible) systems, but there is still little work on dissipative (irreversible) systems. This motivates our research in this paper.  
Here, we provide a brief summary of the existing work in the literature.
A general structure preserving reduced order modeling approach for gradient systems is proposed in \cite{ROM-1}. The authors use the symmetric interior penalty discontinuous Galerkin (SIPG) method for spatial discretization and the average vector field (AVF) method for temporal discretization. The nonlinear terms are taken care of by the discrete empirical interpolation method (DEIM). The major drawback of this approach is that the resulting system is fully implicit and nonlinear. A similar idea is applied to design structure-preserving integration and model order reduction of the skew-gradient reaction-diffusion systems \cite{ROM-skew}. A structure-preserving Galerkin POD reduced-order modeling for the Hamiltonian systems is introduced in \cite{WangZhu-POD-1}. The major idea is to introduce a modified skew-symmetric operator in the reduced order model such that the Hamiltonian is preserved. A further extension introduced in \cite{WangZhu-POD-2} overcomes the high computational complexity with non-polynomial nonlinearities in the Hamiltonian by the discrete empirical interpolation method. There is also some work in designing reduced-order models for the dissipative systems, in particular, the phase field models or gradient flow models. For instance, the authors introduce a reduced order model for the Allen-Cahn equation by embracing the idea of the scalar auxiliary variable (SAV) method in \cite{ROM-AC-1}. An alternative approach to develop the reduced order model for the Allen-Cahn equation is introduced in \cite{ROM-AC-2}, where the authors use the stabilized semi-implicit scheme. A rigorous numerical analysis is also provided. A finite difference approach in space and IMEX Runge-Kutta approach in time is introduced for developing reduced order models of the Allen-Cahn equation in \cite{ROM-AC-3}. Similar techniques of \cite{WangZhu-POD-1} are exploited to develop structure-preserving reduced-order modeling for the Korteweg-de Vries equation in \cite{ROM-KdV}. A method is introduced for the dissipative Hamiltonian systems, but it only applies to dissipative Hamiltonian systems with a quadratic Hamiltonian \cite{ROM-dissipative-1}. Meanwhile, there are several other research directions. The authors consider the inclusion of spatial adaptivity for the snapshot computation in the offline phase of model order reduction \cite{ROM-AdaptivePOD}. Using deep learning techniques, particularly deep neural networks, for model order reductions has also been considered \cite{ROM-NN-1}.

How to systematically derive such structure-preserving POD-ROM is the major focus of this paper. The pursuit of structure-preserving ROMs ensures that while the intricacies of the original system might be reduced for computational efficiency, the core governing principles remain untouched. This balance between efficiency and authenticity is crucial for the reliability and trustworthiness of numerical simulations in various scientific and engineering applications. We embrace the energy quadratization (EQ) idea we introduced to design numerical algorithms for thermodynamically consistent PDEs \cite{Yang&Zhao&WangJCP2017}. The EQ method has offered a transformative perspective in the realm of structure-preserving numerical approximations. By introducing auxiliary variables, the original PDE system is transformed so that the system's free energy adopts a quadratic form. It provides a fresh lens through which these problems can be approached.
As we can see later in this paper, this energy quadratization (EQ) process is a cornerstone in developing structure-preserving ROMs for thermodynamically consistent PDEs.

In this paper, we introduce a numerical platform that can systematically derive ROMs for reversible-irreversible thermodynamically consistent PDE models by embracing several novel techniques. First of all, we utilize the EQ technique \cite{Zhao2018EQreview, Yang&Zhao&WangJCP2017} to reformulate the generic thermodynamically consistent PDE models into the quadratic Onsager form. By this equivalent model reformulation, the reversible-irreversible thermodynamic structures of the original PDEs are fully disclosed. Then, thanks to the quadratic structure in our reformulated system, we are able to slightly modify the classical POD-ROM, which was inspired by \cite{EggerNM2019}, to derive structure-preserving POD-ROM. There are several unique advantages of our POD-ROM framework compared with existing results in the literature: (1) First of all, our approach is rather general in that the POD-ROM framework applies to most existing thermodynamically consistent models and respects their thermodynamic structures after model order reduction; (2) Secondly, the structure-preserving numerical integration of our POD-ROM framework is linear, making it easy-to-implement and cheap-to-compute; (3) Thirdly, given the linearized nature of the POD-ROM framework, all the nonlinear terms in the ROM can be treated explicitly (using many existing techniques such as DEIM) while still preserving the thermodynamic structure for the fully-discrete numerical solutions. These properties make our POD-ROM framework compatible and widely applicable.

The rest of this paper is structured as follows. Section 2 provides a comprehensive view of reformulating the thermodynamically consistent PDEs model via the generalized Onsager principle, particularly with the Onsager triplet. This allows us to recast the system using the energy quadratization (EQ) method, resulting in a system with free energy expressed in quadratic forms in the state variables. In Section 3, we elaborate on the procedure of designing structure-preserving reduced order models in the context of the EQ reformulation. Afterward, we introduce the numerical platform to develop structure-preserving numerical algorithms for the reduced order models in Section 4. In Section 5, we showcase a range of numerical examples for specific thermodynamically consistent PDE models, demonstrating the effectiveness of the proposed computational strategy. We then wrap this paper with brief concluding remarks in the last section.

\section{Thermodynamically consistent reversible-irreversible PDE models based on the generalized Onsager principle} \label{sec:GOP}

\subsection{Generalized Onsager principle}
Consider a domain $\Omega$, and denote the thermodynamic variable $\phi$. We recall the generalized Onsager principle. It consists of three key ingredients: the state or thermodynamic variable $\phi$, the free energy $\cE$, and the kinetic equation dictated by a mobility matrix (or operator) $\cG$. In this paper, we name it 
\beq
\mbox{ the Onsager triplet: } (\phi, \cG, \cE).
\eeq 
The kinetic equation, stemming from the Onsager linear response theory,   is given by
\begin{subequations} \label{eq:evolution-general}
\begin{align}
&  \partial_t \phi (\bx,t) = - \cG \frac{\delta \cE}{\delta \phi} \mbox{ in } \Omega, \\
&\cB(\phi(\bx,t)) = g(\bx,t), \mbox{ on } \partial \Omega,
\end{align}
\end{subequations}
where  $\cB$ is a trace operator,  and $\cG$ is the mobility operator that contains two parts:
\beq
\cG=\cG_a+\cG_s.
\eeq
$\cG_s$  is symmetric and positive semi-definite  to ensure thermodynamically consistency,  $\cG_a$  is  skew-symmetric,  and $\frac{\delta \cE}{\delta \phi}$ is the variational derivative of $\cE$, known as the chemical potential. Then, the triplet $(\phi,\cG,\cE)$ uniquely defines a thermodynamically consistent model. One intrinsic property  of \eqref{eq:evolution-general} owing to the thermodynamical consistency  is the energy dissipation law
\begin{subequations} \label{EDL}
\begin{align}
& \frac{d \cE}{d t} = \Big( \frac{\delta \cE}{\delta \phi},   \frac{\partial\phi}{\partial t} \Big)+ \dot{\cE}_{surf} =\dot{\cE}_{bulk}+\dot{\cE}_{surf},\\
&\dot{\cE}_{bulk}=-\Big( \frac{\delta \cE}{\delta \phi}, \cG_s \frac{\delta \cE}{\delta \phi} \Big) \leq 0, \\
& \Big( \frac{\delta \cE}{\delta \phi}, \cG_a \frac{\delta \cE}{\delta \phi} \Big) = 0, \quad  \dot{\cE}_{surf}=\int_{\partial \Omega} g_b ds, 
\end{align}
\end{subequations}
where the inner product is defined by $(f, g) = \int_\Omega fgd\bx$, $\forall f, g \in L^2(\Omega)$,
and $\dot{\cE}_{surf}$ stems from the boundary contribution, and $g_b$ is the boundary integrand.
When $\cG_a=0$, \eqref{eq:evolution-general} is a purely dissipative system. When $\cG_s=0$, it is a purely dispersive system. $\dot{\cE}_{surf}$ vanishes only for suitable boundary conditions, which include periodic and certain physical boundary conditions.
When the mass, momentum, and total energy conservation are present in hydrodynamic models, these conservation laws are viewed as constraints imposed on the hydrodynamic variables. Then, the energy dissipation rate will have to be calculated subject to the constraints.

\subsection{Model reformulation with the energy quadratization (EQ) method}
Now,  we illustrate the idea of the energy quadratization method. Denote the total energy as
\beq
\cE(\phi) = \int_\Omega e d\bx,
\eeq 
with $e$ the energy density function. We denote $\cL_0$ as a semi positive definite linear operator that can be separated from $e$. 
Introduce the auxiliary variable
\beq \label{eq:EQ-intermediate-variable}
q = \sqrt{2\Big(e -\frac{1}{2}|\cL_0^{\frac{1}{2}} \phi|^2+ \frac{A_0}{|\Omega|}\Big)},
\eeq
where  $A_0 >0$ is a constant so that $q$ is a well defined real variable. Then, we rewrite the energy as
\beq \label{eq:EQ-energy}
\cE(\phi, q) =  \frac{1}{2}\Big(\phi, \cL_0\phi \Big) + \frac{1}{2}\Big( q, q \Big) - A_0,
\eeq 
With the EQ approach above,  we transform the free energy density into a quadratic one by introducing an auxiliary variable to ``remove'' the nonlinear terms from the energy density.
Assuming $q =q(\phi)$ and denoting $g(\phi)= \frac{\partial q}{\partial \phi}$,
we reformulate  \eqref{eq:evolution-general} into an equivalent form
\begin{subequations} \label{eq:evolution-general-EQ}
\begin{align}
&\partial_t \phi = -(\cG_a + \cG_s) \Big[ \cL_0 \phi + q g(\phi)] , \\
&\partial_t q = g(\phi):\partial_t \phi, \\
&q|_{t=0} =\left. \sqrt{2\Big(e -\frac{1}{2}|\cL_0^{\frac{1}{2}} \phi|^2+ \frac{A_0}{|\Omega|}\Big)}\right|_{t=0}.
\end{align}
\end{subequations}
Now, instead of dealing with \eqref{eq:evolution-general} directly, we develop structure-preserving schemes for \eqref{eq:evolution-general-EQ}.
The advantage of using model \eqref{eq:evolution-general-EQ} over model \eqref{eq:evolution-general} is that the energy density is transformed into a quadratic one in \eqref{eq:evolution-general-EQ}.

Denote $\Psi =\begin{bmatrix} \phi \\ q \end{bmatrix}$. We rewrite \eqref{eq:evolution-general-EQ} into a vector form
\beq \label{eq:evolution-general-EQ-vector}
\partial_t \Psi  = -\cN(\Psi) \cL \Psi, 
\eeq 
where $\cN(\Psi)$ is the mobility operator, and $\cL$ is a linear operator.
\begin{subequations} \label{eq:nonlinear-terms}
\begin{align}
& \cN(\Psi) = \cN_s(\Psi) + \cN_a(\Psi), \\
& \cN_a(\Psi) = \cN_0^\ast \cG_a  \cN_0 , \\
&   \cN_s(\Psi) = \cN_0^\ast \cG_s \cN_0,
\end{align}
\end{subequations}
where 
$$
\cN_0= \begin{bmatrix}
   \bI & g(\phi))
\end{bmatrix} , 
\quad \cL = \begin{bmatrix}
\cL_0 & \\
& \bI
\end{bmatrix},
$$
and $\cN_0^\ast$ is the adjoint operator of $\cN_0$. We name it the Onsager-Q model, where
\beq  \label{eq:energy-law-EQ}
\frac{d \cE(\Psi)}{d t}
= \Big( \frac{\delta \cE}{\delta \Psi} \frac{\partial \Psi}{\partial t}, 1 \Big)
= -\Big( \ \cL \Psi,  \cN(\Psi) \cL \Psi  \Big) = -\Big( \cN_0 \cL \Psi,  \cG_s \cN_0 \cL \Psi  \Big)   \leq  0,
\eeq
where the energy of \eqref{eq:EQ-energy} are reformulated in a vector form as
\beq  \label{eq:EQ-energy-Vector}
\cE(\Psi) = \frac{1}{2} \Big(\Psi, \cL \Psi \Big) - A_0,
\eeq 
Note that the energy in \eqref{eq:EQ-energy} or \eqref{eq:EQ-energy-Vector} is quadratized so that we can develop a paradigm to derive linear, energy stable numerical schemes for the model.

\begin{remark}
The EQ approach is also applicable when $q =q(\phi,\nabla \phi)$ and denoting
\beq   \label{eq:q_derivative}
g(\phi)= \frac{\partial q}{\partial \phi},  \quad  \bG(\phi) = \frac{\partial q}{\partial \nabla \phi},
\eeq
the kinetic equation \eqref{eq:evolution-general} can be reformulated into an equivalent form
\begin{subequations} \label{eq:evolution-general-EQ-q}
\begin{align}
&\partial_t \phi = -(\cG_a + \cG_s) \Big[ \cL_0 \phi + q g(\phi) - \nabla \cdot ( q \bG(\phi))\Big] , \\
&\partial_t q = g(\phi): \partial_t \phi + \bG(\nabla \phi): \nabla \partial_t \phi, \\
&q|_{t=0} =\left. \sqrt{2\Big(e -\frac{1}{2}|\cL_0^{\frac{1}{2}} \phi|^2+ \frac{A_0}{|\Omega|}\Big)}\right|_{t=0}.
\end{align}
\end{subequations} 
For the simplicity of notation, the approach presented in this paper derives structure-preserving ROM for \eqref{eq:evolution-general-EQ} and it is also applicable for \eqref{eq:evolution-general-EQ-q}.
\end{remark}

\section{Structure-preserving reduced order models (ROMs)} \label{sec:SROM}
In this section, we present the general structure-preserving framework for reduced order models (ROMs).  
We emphasize that the transformed model in \eqref{eq:evolution-general-EQ} is equivalent to \eqref{eq:evolution-general}. Thus, we focus on designing the ROM for \eqref{eq:evolution-general-EQ}, which in turn is a good surrogate model for \eqref{eq:evolution-general}.

\subsection{Model order reduction (MOR)}
Consider the solution for  \eqref{eq:evolution-general-EQ} as
$$
\Psi(\bx, t) = \sum_{k=1}^{\infty} \ba_k(t) \psi_k(\bx),
$$
where $\psi_k(\bx)$ are the spatial modes and $\ba_k(t)$ are the corresponding time coefficients. For the ROM, we look for a sequence of
\beq  \label{eq:ROM-solution}
\Psi_r(\bx, t) = \sum_{k=1}^r \ba_k(t) \psi_k(\bx),
\eeq
such that it provides an accurate approximation to $\Psi(\bx, t)$ and $\psi_k(\bx)$ are the optimal basis modes when r is small. In general, we would like the modes orthonormal, i.e., 
$$
\int_\Omega \psi_i(\bx) \psi_j(\bx) d\bx = \left\{
\begin{array}{l}
1, \quad i =j, \\
0, \quad  i \neq j.
\end{array}
\right.
$$

Plugging the ROM solution of \eqref{eq:ROM-solution} into \eqref{eq:evolution-general-EQ-vector}, we have
\beq \label{eq:ROM}
\sum_{k=1}^r \frac{d}{dt} \ba_k(t)  \psi_k(\bx) = - \cN \Big( \sum_{k=1}^r  \ba_k(t)  \psi_k(\bx) \Big) \cL  \Big[ \sum_{k=1}^r  \ba_k(t)  \psi_k(\bx) \Big].
\eeq 
It is clear that the ROM in \eqref{eq:ROM} preserves the original structure. In fact, if we denote the reduced energy as
\beq
\cE_r(\Psi_r(\bx,t)) = \frac{1}{2} \Big(  \Psi_r(\bx, t), ~ \cL \Psi_r(\bx, t) \Big) - A_0,
\eeq 
we can have the following energy dissipation law
\begin{eqnarray*}
\frac{d \cE_r(\Psi_r(\bx,t)) }{dt} 
&=& \Big( \cL \Psi_r(\bx,t), ~\frac{d}{dt} \Psi_r(\bx, t) \Big) \\
&= &\Big( \cL \Big[ \sum_{k=1}^r  \ba_k(t)  \psi_k(\bx) \Big], ~ \sum_{k=1}^r \frac{d}{dt} \ba_k(t)  \psi_k(\bx)  \Big)  \\
&=& - \Big(  \cL  \Big[ \sum_{k=1}^r  \ba_k(t)  \psi_k(\bx) \Big],  ~ \cN \Big( \sum_{k=1}^r  \ba_k(t)  \psi_k(\bx) \Big) \cL  \Big[ \sum_{k=1}^r  \ba_k(t)  \psi_k(\bx) \Big]\Big) \\
& \leq & 0.
\end{eqnarray*}

\subsection{POD-Galerkin method}

For these structure-preserving spatial discretizations, either finite difference, finite elements, or spectral methods can be used. For simplicity, we assume periodic boundary conditions in the rest of this paper and use pseudo-spectral discretization for space. Then, we focus on the proper orthogonal decomposition (POD) for the selection of optimal spatial modes and the construction of structure-preserving ROMs in combination with Galerkin projection. 

\subsubsection{Pseudo-spectral spatial discretization}

We consider a rectangular domain $\Omega =[l_x,  r_x] \times [l_y, r_y]$, and denote $L_x = r_x - l_x$ and $L_y = r_y - l_y$. For Fourier spectral method, we discretize the domain into equally distanced rectangular meshes with $h_x = \frac{L_x}{N_x}$ and $h_y = \frac{L_y}{N_y}$, with $N_x$, $N_y$ the number of meshes in $x$ and $y$ directions respectively, and $h_x$, $h_y$ the corresponding mesh sizes. Then, we have the discrete coordinates
\beq
(x_m, y_n) = (l_x + mh_x, l_y +n h_y), \quad m = 0, 1,2, \cdots, N_x-1,  \quad n = 0, 1, 2, \cdots, N_y-1.
\eeq 
Given the domain is periodic, we have $x_{N_x} = x_0$, $y_{N_y} = y_0$. Assume $N_x$ and $N_y$ are even numbers with $N_x = 2K_x$ and $N_y=2K_y$. 

Denote $\phi_{ij}$ the numerical approximation for $\phi(x_i, y_j)$. Then, we have the discrete Fourier expansion in 2D as
\beq
\phi_{mn} = \frac{1}{N_x N_y} \sum_{k=-K_x+1}^{K_x} \sum_{l=-K_y+1}^{K_y} \hat{\phi}_{kl} \exp( 2\pi i (k  \frac{x_m}{L_x} + l  \frac{y_n}{L_y}))
\eeq 
and the corresponding Fourier inverse transform is given as 
\beq
\hat{\phi}_{kl}  = \sum_{m=0}^{N_x-1} \sum_{n=0}^{N_y-1} \phi_{mn} \exp(-2\pi i(k \frac{x_m}{L_x} + l \frac{y_n}{L_y}))
\eeq 

With the notations for the Fourier transform, we can calculate the first-order and second-order partial derivatives as
\beq
(\cD_{N_x} \phi)_{ij} = \frac{1}{N_x N_y}  \sum_{k=-K_x+1}^{K_x} \sum_{l=-K_y+1}^{K_y}  \frac{2\pi k i}{L_x} \hat{\phi}_{k,l} \exp(2\pi i(k x_i + l y_j)) 
\eeq 
and the second-order partial derivative is given as
\beq
(\cD^2_{N_x} \phi)_{ij} = \frac{1}{N_x N_y}  \sum_{k=-K_x+1}^{K_x} \sum_{l=-K_y+1}^{K_y}  (-\frac{4\pi^2 k^2 }{L_x^2}) \hat{\phi}_{k,l} \exp(2\pi i(k x_i + l y_j))
\eeq 
Similarly we can define the differential operators for $(\cD_{N_y} \phi)_{ij}$ and $(\cD^2_{N_y}\phi)_{ij}$. Then, we can introduce the discrete Laplacian, gradient, and divergence operators as
\beq
\Delta_N \phi = (\cD_{N_x}^2  + \cD_{N_y}^2) \phi, \quad \nabla_N \phi = \begin{pmatrix} \cD_{N_x} \phi \\ \cD_{N_y} \phi \end{pmatrix}, \quad \nabla_N \cdot \begin{pmatrix} \phi \\ \psi  \end{pmatrix} = \cD_{N_x} \phi + \cD_{N_y} \psi.
\eeq 
Then, we finally can introduce the  discrete operator
\beq  \label{eq:G-L}
\quad \cL_h = -\varepsilon^2\Delta_N +  C, \quad C>0.
\eeq 
Here, $C$ is a positive stabilizing constant. The operator $\cL^{-1}$ can be defined as
\beq
\Big(  \cL_h^{-1} \phi \Big)_{mn} = \frac{1}{N_x N_y} \sum_{k=-K_x+1}^{K_x} \sum_{l=-K_y+1}^{K_y} \frac{1}{\varepsilon^2 \lambda_{k,l} + C} \hat{\phi}_{kl} \exp( 2\pi i (k  \frac{x_m}{L_x} + l  \frac{y_l}{L_y})),
\eeq 
Here $\lambda_{k,l} = (\frac{2k \pi}{L_x})^2 + (\frac{2l \pi}{L_y})^2$. Other exponential operators can be defined in a similar manner.
Given the discrete functions in the 2D mesh $\phi, \psi \in \hR^{N_x, N_y}$, we can define the inner product and induced $l^2$ norm as
\beq
\| \phi \|_2 = \sqrt{(\phi, \phi)}, \quad  (\phi, \psi) =  h_x h_y \sum_{i=0}^{N_x-1} \sum_{j=0}^{N_y-1} \phi_{ij} \psi_{ij}.
\eeq 
It can be easily shown that the following summation by parts formula hold
\beq
(\phi, \Delta_N \psi) = -(\nabla_N \phi, \nabla_N \psi), \quad (\phi, \Delta^2_N \psi) = (\Delta_N \phi, \Delta_N \psi).
\eeq 

For more properties, interested readers can refer to \cite{Gong&Zhao&WangACM}. Without loss of generality, we use the notation $\cL_h$ to denote the corresponding discrete operator for $\cL$ and $\cN_h$ for $\cN$ using the pseudo-spectral spatial discretization.

\subsubsection{POD-Galerkin projection}
To give an accurate low-dimensional approximation from a subspace spanned by a set of reduced basis of dimension $r$ in $\mathbb R^n$, we use the proper orthogonal decomposition (POD) to construct a set of global basis, also known as POD modes, from a singular value decomposition (SVD) of some snapshot data of the system. Then we use Galerkin projection as the means for dimension reduction.

Suppose we have the sampling of the phase variable $\phi(\bx, t)$ as
\beq \label{eq:sampling-phi}
\Phi =  \begin{bmatrix}
\Phi_1  & \Phi_2 & \cdots  & \Phi_m
\end{bmatrix},
\eeq 
which might be measurements from simulations or experimental data. Here $\Phi_k$ is the data collected from $t=t_k$ on the equally distanced rectangular meshes in a vector form, i.e., $\Phi_k  \in \mathbb{R}^{n}$ with $n=N_x \times N_y$.
Typically, $n \gg m$. Denote the full singular value decomposition (SVD) of the data matrix $\Phi \in \mathbb{R}^{n,m}$ as
$$
\Phi= \hat{\bU}_\phi \hat{\Sigma}_\phi \hat{\bV}_\phi^T, 
$$
where $\hat{\bU}_\phi\in \mathbb{R}^{n,n}$, and $\hat{\bV}_\phi\in \mathbb{R}^{m,m}$ and $\hat{\Sigma}_\phi \in \mathbb{R}^{n,m}$. Our objective is to find $r \ll m$ optimal spatial modes that are necessary to represent the spatial dynamics accurately. Given a threshold $\varepsilon$, we can find $r$ such that $\| \Phi - \Phi_r\| < \varepsilon$, where $\Phi_r$ is from the reduced SVD for $\Phi$ as
$$
\Phi \approx \Phi_r = \bU_\phi \Sigma_\phi \bV_\phi^T,
$$
where $\bU_\phi \in \mathbb{R}^{n, r}$, $\Sigma_\phi \in \mathbb{R}^{r, r}$ and $\bV_\phi \in \mathbb{R}^{m, r}$. And we denote our optimal basis modes
\beq \label{eq:phi-r-mode}
\bU_\phi := \hat{\bU}_\phi[:, 1:r],
\eeq 
where the truncation preserves the $r$ most dominant modes. The truncated $r$ modes for $\Phi$ are then used as the low-rank, orthogonal basis to represent the spatial dynamics. Similarly, for the auxiliary variable $q(\bx,t)$, we can obtain the data from the sampling  in \eqref{eq:sampling-phi}, as
\beq \label{eq:sampling-q}
\bQ  =  \begin{bmatrix}
h(\Phi_1)  &  h(\Phi_2) & \cdots  & h(\Phi_m)
\end{bmatrix},
\eeq 
where $h(\Phi_k) \in \hR^n$ can be treated as the sampling for $q(\bx,t)$ at $t_k$ on the same spatial meshes. In a similar manner, we calculate the SVD for $\bQ\in \hR^{n,m}$ as
$$
\bQ = \hat{\bU}_q \hat{\Sigma}_q \hat{\bV}_q^T,
$$
and have the reduced SVD by picking $r\ll m$ as
\beq
\bQ_r = \bU_q \Sigma_q \bV^T_q.
\eeq 
By this approach, we obtain the truncated $r$ modes for $q(\bx,t)$ as
\beq  \label{eq:q-r-mode}
\bU_q :=\hat{\bU}_q[:,1:r].
\eeq 
With \eqref{eq:phi-r-mode} and \eqref{eq:q-r-mode}, we have the $r$ modes for the general variable $\Psi$ as
\beq
\bU = 
\begin{bmatrix}
\bU_\phi &  0 \\
0 & \bU_q
\end{bmatrix}.
\eeq 

Denoting
\beq
\ba(t) = \begin{bmatrix}
\ba_\phi(t) \\ \ba_q(t)
\end{bmatrix},  \quad \Psi = \begin{bmatrix}
\phi \\ q
\end{bmatrix},
\eeq 
where $\ba_\phi(t) \in \hR^r$ and $\ba_q(t) \in \hR^r$ are the time-dependent coefficient vectors, we approximate the solution using the POD expansion:
\beq
\Psi = \bU \ba(t),  \mbox{  i.e., }  \quad  \phi = \bU_\phi \ba_\phi(t), \quad q = \bU_q \ba_q(t).
\eeq

\begin{remark}
Note that the POD approach constructs a reduced basis that is optimal in the sense that a certain approximation error concerning the snapshot data is minimized. The minimum 2-norm error from approximating the snapshot data using the POD modes is given by
\[\sum_{j=1}^m \Big\Vert \phi_j-\Phi\Phi^T\phi_j\Big\Vert_2^2=\sum_{j=r+1}^{d_1} \sigma_{j}^2,~~\sum_{j=1}^m \Big\Vert h(\phi_j)-\bQ\bQ^T h(\phi_j)\Big\Vert_2^2=\sum_{j=r+1}^{d_2} \lambda_{j}^2,\]
where $d_1$ and $d_2$ are the rank of snapshot matrix $\Phi$ and $\bQ$ respectively, and $\sigma_{1}\geq\sigma_{2}\geq\cdots\geq\sigma_{d_1}>0$ and $\lambda_{1}\geq\lambda_{2}\geq\cdots\geq\lambda_{d_2}>0$ are the nonzero singular values of $\Phi$ and $\bQ$ respectively. For more details on the POD basis, we refer the reader to \cite{kunisch2002POD}.   
\end{remark}

Then, we apply POD-Galerkin projection to \eqref{eq:ROM} and derive the POD-ROM as
\beq \label{eq:POD-ROM}
\frac{d \ba(t)}{dt} = - \bU^T \cN_h \Big(\bU \ba(t) \Big) \cL_h \Big[\bU \ba(t)\Big],
\eeq 
which we name POD-ROM-vanilla.
By solving this system of much smaller dimensions as \eqref{eq:POD-ROM}, the solution of a high-dimensional nonlinear dynamical system can be approximated.

However, POD-ROM in \eqref{eq:POD-ROM} does not necessarily respect the energy dissipation law as the full model in \eqref{eq:energy-law-EQ}. Specifically, if we calculate the energy dissipation rate of \eqref{eq:POD-ROM} with respect to the discrete energy
$$
\cE_r(\bU \ba(t)) = \frac{1}{2} \Big[ \bU \ba(t) \Big]^T \cL_h \Big[\bU \ba(t) \Big],
$$
we have
\begin{subequations}
\begin{align}
\frac{ d \cE_r(\bU \ba(t))}{dt} 
&=  \Big( \cL_h \Big[\bU \ba(t) \Big] \Big)^T  \bU \frac{d\ba(t)}{dt}\\
&= -\Big( \cL_h \Big[\bU \ba(t) \Big] \Big)^T \bU \bU^T \cN_h\Big(\bU \ba(t)\Big) \cL_h \Big[\bU \ba(t) \Big],
\end{align}
\end{subequations}
where $\bU \bU^T \cN_h(\bU \ba(t))$ is not necessarily a semi-positive-definite matrix, which might violate the energy dissipation law.

\subsection{General POD-ROM structure-preserving framework}
To address the issue above, we propose two approaches that can automatically preserve the energy dissipation structure in the POD-ROM.

\subsubsection{Approach I: POD-ROM-I}
As a first remedy, we focus on modifying the matrix $\bU \bU^T \cN_h(\bU \ba(t)) $ to make it semi-positive definite. Inspired by \cite{WangZhu-POD-1}, we modify the mobility operator in \eqref{eq:POD-ROM} by assuming that there exists a mobility operator, $\hat\cN_h$ with the same property like $\cN_h$ such that,
\begin{equation}
\bU^T\cN_h=\hat\cN_h\bU^T.
\end{equation}
Then, by multiplying $\bU$ on both sides of the equation, we have
\begin{equation}
\hat\cN_h = \bU^T\cN_h\bU.
\end{equation}
Replacing the mobility operator $\cN_h$ in \eqref{eq:POD-ROM} with $\hat\cN_h$, we came up with the following POD-ROM
$$
\frac{d \ba(t)}{dt} =-\hat\cN_h \Big(\bU \ba(t) \Big) \bU^T \cL_h\Big[\bU \ba(t)\Big],
$$
i.e.,
\beq \label{eq:POD-ROM-I}
\frac{d \ba(t)}{dt} 
=-\bU^T\cN_h \Big(\bU \ba(t) \Big)\bU \bU^T \cL_h\Big[\bU \ba(t)\Big].
\eeq 
We named \eqref{eq:POD-ROM-I} as POD-ROM-I.

\begin{theorem}[Energy Stability]
The POD-ROM-I in \eqref{eq:POD-ROM-I} preserves the energy dissipation law
\beq
\frac{ d \cE_r(\bU \ba(t))}{dt}  =- \Big( \bU \bU^T\cL_h \Big[\bU \ba(t) \Big],~  \cN_h\Big(\bU \ba(t)\Big) \bU \bU^T \cL_h \Big[\bU \ba(t) \Big]\Big) \leq 0,
\eeq 
where the discrete energy is defined as
$
\cE_r(\bU \ba(t)) = \frac{1}{2} \Big[ \bU \ba(t) \Big]^T \cL_h \Big[\bU \ba(t) \Big].
$
\end{theorem}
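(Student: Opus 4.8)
The plan is to differentiate the quadratic reduced energy along trajectories of \eqref{eq:POD-ROM-I} and then exploit the structural splitting of the mobility. First I would record that $\cL_h$ is symmetric with respect to the discrete inner product: the block $-\varepsilon^2 \Delta_N + C$ is self-adjoint by the summation-by-parts formula for the pseudo-spectral Laplacian, and the auxiliary block is the identity. Differentiating $\cE_r(\bU\ba) = \frac{1}{2}(\bU\ba)^T \cL_h(\bU\ba)$ then gives, with no stray cross-term factor, $\frac{d\cE_r}{dt} = (\cL_h \bU\ba)^T \bU \frac{d\ba}{dt}$.

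Next I would substitute the evolution equation $\frac{d\ba}{dt} = -\bU^T \cN_h(\bU\ba)\,\bU\bU^T \cL_h(\bU\ba)$ from \eqref{eq:POD-ROM-I}. The key algebraic step is to recognize the orthogonal projector $P := \bU\bU^T$, using $\bU^T \bU = \bI$ from the orthonormality of the POD modes together with the symmetry $P^T = P$. Writing $w := \cL_h \bU\ba$, the derivative collapses to $\frac{d\cE_r}{dt} = -w^T \bU\bU^T \cN_h(\bU\ba)\, P w = -(P w)^T \cN_h(\bU\ba)(P w)$. Setting $v := P w = \bU\bU^T \cL_h(\bU\ba)$, this is exactly the single quadratic form $-\big(v,\, \cN_h(\bU\ba)\, v\big)$ displayed in the statement.

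Finally I would close the sign by invoking the splitting inherited from \eqref{eq:nonlinear-terms}, under which $\cN_h$ decomposes into a symmetric positive semi-definite part $\cN_0^\ast \cG_s \cN_0$ (because $\cG_s$ is symmetric positive semi-definite) and a skew-symmetric part $\cN_0^\ast \cG_a \cN_0$ (because $\cG_a$ is skew-symmetric). The skew-symmetric part contributes $v^T \cN_a v = 0$ for every $v$, while the symmetric part gives $v^T \cN_s v \geq 0$, so that $\frac{d\cE_r}{dt} = -v^T \cN_s v \leq 0$, as claimed.

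The only genuine subtlety, and the step I would watch most carefully, is that this symmetric-plus-skew decomposition together with the semi-definiteness of the symmetric part must survive the pseudo-spectral discretization, i.e. $\cN_h$ must retain a symmetric positive semi-definite part and a skew part with respect to the discrete inner product; this is precisely what the self-adjointness of the discrete operators (summation by parts) guarantees, inherited through the adjoint structure $\cN_0^\ast \cG \cN_0$. I would also note that $\cN_h(\bU\ba)$ is evaluated at a frozen state at each instant $t$, so the pointwise-in-time quadratic-form argument applies despite the nonlinear, state-dependent mobility, and no estimate in $t$ or convexity of $\cE_r$ is required.
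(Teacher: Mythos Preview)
Your proposal is correct and follows essentially the same approach as the paper: differentiate the quadratic energy using the self-adjointness of $\cL_h$, substitute \eqref{eq:POD-ROM-I}, and recognize the resulting expression as a quadratic form $-(Pw)^T \cN_h(\bU\ba)(Pw)$ with $P=\bU\bU^T$. The paper's proof is identical but terser---it records the chain of equalities and simply writes $\leq 0$ at the end---whereas you spell out the justification for that inequality via the $\cN_s+\cN_a$ splitting and note explicitly why the discrete operators inherit the required self-adjoint/skew structure; these are useful elaborations but not a different argument.
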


\begin{proof}
As a matter of fact, this energy dissipation law can be easily observed if take the following calculations
\begin{subequations}
\begin{align}
\frac{ d \cE_r(\bU \ba(t))}{dt} 
&=  \Big( \cL_h \Big[\bU \ba(t) \Big] \Big)^T  \bU \frac{d\ba(t)}{dt}\\
&= -\Big( \cL_h \Big[\bU \ba(t) \Big] \Big)^T \bU \bU^T \cN_h\Big(\bU \ba(t)\Big) \bU \bU^T \Big( \cL_h \Big[\bU \ba(t) \Big] \Big)\\
&= - \Big( \bU \bU^T\cL_h \Big[\bU \ba(t) \Big],~  \cN_h\Big(\bU \ba(t)\Big) \bU \bU^T \cL_h \Big[\bU \ba(t) \Big]\Big)\\
& \leq 0.
\end{align}
\end{subequations}
This completes the proof.
\end{proof}

However, even though the approach in \eqref{eq:POD-ROM-I} respects the energy dissipation property, it has a modified energy dissipation rate, which might lead to inaccurate dissipative dynamics. 

\subsubsection{Approach II: POD-ROM-II}
To overcome these issues, we introduce a new approach that is inspired by \cite{EggerNM2019}, where the author develops energy-stable numerical algorithms for dissipative systems. Specifically, we reformulate the reduced model by applying $\cL_h^*$ on both sides of the equation and then obtain the following POD-ROM by Galerkin projection as
\beq \label{eq:POD-ROM-II}
\bU^T \cL_h^* \bU \frac{d \ba(t)}{dt} = - \bU^T \cL_h^* \cN_h \Big(\bU \ba(t) \Big) \cL_h \Big[\bU \ba(t)\Big],
\eeq 
which we name POD-ROM-II.
Note that $\cL_h = \begin{bmatrix}
\cL_{0,h} & 0 \\ 0 & \bI
\end{bmatrix}$ is an invertible operator, such that the problem \eqref{eq:POD-ROM-II} is well-posed.
\begin{theorem}[Energy Stability]
The POD-ROM-II in \eqref{eq:POD-ROM-II} preserves the energy dissipation law
\beq
\frac{ d \cE_r(\bU \ba(t))}{dt}  =- \Big( \cL_h \Big[\bU \ba(t) \Big], ~ \cN_h\Big(\bU \ba(t)\Big) \cL_h \Big[\bU \ba(t) \Big]\Big) \leq 0,
\eeq 
where the discrete energy is defined as
$
\cE_r(\bU \ba(t)) = \frac{1}{2} \Big[ \bU \ba(t) \Big]^T \cL_h \Big[\bU \ba(t) \Big].
$
\end{theorem}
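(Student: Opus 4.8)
The plan is to follow exactly the template of the POD-ROM-I energy-stability proof given just above: I would compute $\frac{d\cE_r}{dt}$ directly from the quadratic form, then test the POD-ROM-II equation \eqref{eq:POD-ROM-II} against the reduced coefficient vector $\ba(t)$ and read off both sides as the claimed identity. First I would differentiate the reduced energy. Since $\cL_h$ is self-adjoint --- this follows from its block-diagonal structure $\cL_h = \begin{bmatrix} \cL_{0,h} & 0 \\ 0 & \bI \end{bmatrix}$ together with the summation-by-parts identity $(\phi, \Delta_N \psi) = -(\nabla_N \phi, \nabla_N \psi)$, which makes $\Delta_N$, and hence $\cL_{0,h} = -\varepsilon^2 \Delta_N + C$, symmetric --- the product rule applied to $\cE_r(\bU \ba(t)) = \frac{1}{2}[\bU\ba(t)]^T \cL_h[\bU\ba(t)]$ gives $\frac{d\cE_r}{dt} = \big(\cL_h[\bU\ba(t)]\big)^T \bU \frac{d\ba(t)}{dt}$.

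Second, I would left-multiply \eqref{eq:POD-ROM-II} by $\ba(t)^T$. On the left, $\ba^T \bU^T \cL_h^* \bU \frac{d\ba}{dt} = \big(\cL_h[\bU\ba]\big)^T \bU\frac{d\ba}{dt}$ after using $\cL_h^* = \cL_h$, which is precisely the expression obtained in the first step, namely $\frac{d\cE_r}{dt}$. On the right, the same identity $\ba^T\bU^T\cL_h^* = (\cL_h[\bU\ba])^T$ turns $-\ba^T\bU^T\cL_h^*\cN_h(\bU\ba)\cL_h[\bU\ba]$ into $-\big(\cL_h[\bU\ba], \cN_h(\bU\ba)\cL_h[\bU\ba]\big)$, which is the asserted dissipation rate. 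Note that, in contrast to POD-ROM-I, no $\bU\bU^T$ projection survives, so the rate here is the \emph{exact}, unmodified one.

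Third, for the sign I would invoke the splitting $\cN_h = \cN_{s,h} + \cN_{a,h}$ inherited from \eqref{eq:nonlinear-terms}: the skew-symmetric part $\cN_{a,h} = \cN_0^*\cG_a\cN_0$ contributes zero to the quadratic form $(\cL_h[\bU\ba], \cN_{a,h}\cL_h[\bU\ba])$, while the symmetric positive semi-definite part $\cN_{s,h} = \cN_0^*\cG_s\cN_0$ contributes a nonnegative value because $\cG_s$ is positive semi-definite. Hence $\frac{d\cE_r}{dt}\leq 0$, completing the argument.

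The main obstacle --- and precisely the reason POD-ROM-II is formed by applying $\cL_h^*$ \emph{before} the Galerkin projection --- is that this pre-multiplication is what lets the test vector $\bU\frac{d\ba}{dt}$ pair against $\cL_h[\bU\ba]$ to reconstruct the energy rate without the spurious $\bU\bU^T$ factor present in POD-ROM-I. Thus the whole argument hinges on the self-adjointness $\cL_h^* = \cL_h$; once that is established the identity is forced and the inequality is routine. The invertibility of $\cL_h$ noted before the statement is needed only to guarantee that the reduced matrix $\bU^T\cL_h^*\bU$ is nonsingular, so that \eqref{eq:POD-ROM-II} is a well-posed ODE for $\ba(t)$; it plays no role in the energy balance itself.
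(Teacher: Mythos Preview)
Your proposal is correct and follows essentially the same approach as the paper's proof: differentiate the quadratic energy, recognize the result as $\ba(t)^T\bU^T\cL_h^\ast\bU\,\frac{d\ba}{dt}$, substitute the right-hand side of \eqref{eq:POD-ROM-II}, and read off the dissipation rate. Your added justifications for the self-adjointness of $\cL_h$ and for the sign via the decomposition $\cN_h=\cN_{s,h}+\cN_{a,h}$ are details the paper leaves implicit, but the overall line of argument is identical.
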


\begin{proof}
As a matter of fact, this energy dissipation law can be easily observed if take the following calculations
\begin{subequations}
\begin{align}
\frac{ d \cE_r(\bU \ba(t))}{dt} 
&=  \Big( \cL_h \Big[\bU \ba(t) \Big] \Big)^T  \bU \frac{d\ba(t)}{dt}\\
&= \ba(t)^T \bU^T \cL_h^\ast \bU \frac{d\ba(t)}{dt} \\
&= -\Big( \cL_h \Big[\bU \ba(t) \Big] \Big)^T\cN_h\Big(\bU \ba(t)\Big) \Big( \cL_h \Big[\bU \ba(t) \Big] \Big)\\
&= - \Big( \cL_h \Big[\bU \ba(t) \Big], ~ \cN_h\Big(\bU \ba(t)\Big) \cL_h \Big[\bU \ba(t) \Big]\Big)\\
& \leq 0.
\end{align}
\end{subequations}
This completes the proof.
\end{proof}

\begin{rem}
Note that POD-ROM-I in \eqref{eq:POD-ROM-I} and POD-ROM-II \eqref{eq:POD-ROM-II} are rather generic, and they apply to all the reversible-irreversible models that fit in the generic form. However, POD-ROM-II in \eqref{eq:POD-ROM-II} is superior to either POD-ROM-vanilla in \eqref{eq:POD-ROM} or POD-ROM-I in \eqref{eq:POD-ROM-I}, since \eqref{eq:POD-ROM-II}  also preserves the original energy dissipation structure, as shown in \eqref{eq:energy-law-EQ}.
\end{rem}

We can also expand the notations and rewrite POD-ROM-II in matrix forms. The equation of \eqref{eq:POD-ROM-II} can be written as
$$
\begin{bmatrix}
\bU_\phi^T \cL_{0,h}^\ast  \bU_\phi & \\
&\bU_q^T \bU_q 
\end{bmatrix}
\frac{d}{dt} 
\begin{bmatrix}
\ba_\phi(t) \\
\ba_q (t)
\end{bmatrix}
= 
\begin{bmatrix}
\bU_\phi^T & \\
&\bU_q^T 
\end{bmatrix}
\begin{bmatrix}
\cL_{0,h}^\ast & 0 \\
0 & \bI
\end{bmatrix}
\cN_h\Big(\begin{bmatrix}
\bU_\phi \ba_\phi(t) \\
\bU_q \ba_q(t)
\end{bmatrix}\Big)
\begin{bmatrix}
\cL_{0,h} & 0 \\
0 & \bI
\end{bmatrix}
\begin{bmatrix}
\bU_\phi \ba_\phi (t) \\
\bU_q \ba_q (t)
\end{bmatrix}.
$$
The structure-preserving model of \eqref{eq:POD-ROM-II} reads
\begin{subequations}
\begin{align}
\bU_\phi^T \cL_{0,h}^\ast \bU_\phi \frac{d \ba_\phi(t)}{dt} &= -\bU_\phi^T  \cL_{0,h}^\ast \cG\Big(  \cL_{0,h} \bU_\phi \ba_\phi(t) + g(\bU_\phi\ba_\phi(t)) \bU_q  \ba_q(t) \Big), \\
\frac{d \ba_q(t)}{dt} &= -\bU_q^T g(\bU_\phi \ba_\phi(t))^T \cG \Big( \cL_{0,h} \bU_\phi \ba_\phi(t) + g(\bU_\phi \ba_\phi(t)) \bU_q \ba_q(t) \Big).
\end{align}
\end{subequations}

Furthermore, the POD-ROM can be simplified as
\begin{subequations} \label{eq:POD-ROM-linear}
\begin{align}
A_0 \frac{d \ba_\phi(t)}{dt} = -A_1 \ba_\phi(t) - A_2 \ba_q(t), \\
\frac{d \ba_q(t)}{dt} =  - A_3 \ba_\phi(t) - A_4 \ba_q(t),
\end{align}
\end{subequations}
with the following notations for the operators
\begin{subequations} \label{eq:POD-ROM-linear-2}
\begin{align}
& A_ 0= \bU_\phi^T \cL_{0,h}^\ast \bU_\phi, \\
& A_1 = \bU_\phi^T  \cL_{0,h}^\ast \cG \cL_{0,h} \bU_\phi, \\
& A_2(t) = \bU_\phi^T  \cL_{0,h}^\ast \cG g(\bU_\phi\ba_\phi(t)) \bU_q , \\
& A_3(t) = \bU_q^T g(\bU_\phi\ba_\phi(t))^T\cG\cL_{0,h}\bU_\phi,\\
& A_4(t) = \bU_q^T g(\bU_\phi \ba_\phi(t))^T \cG g(\bU_\phi \ba_\phi(t)) \bU_q.
\end{align}
\end{subequations}
The model in \eqref{eq:POD-ROM-linear}-\eqref{eq:POD-ROM-linear-2} will provide guidance on designing structure-preserving numerical algorithms for solving the POD-ROMs.

\section{Linear time-stepping  algorithms for the POD-ROM} \label{Sec:time-marching}

With our framework, the POD-ROM in \eqref{eq:POD-ROM-I} and \eqref{eq:POD-ROM-II} become trivial to be solved. Consider the time domain $[0, T]$. We discretize it into equally distanced intervals $0=t_0 < t_1 < \cdots < t_N=T$ with $h=\frac{T}{N}$, i.e. $t_i = ih$. Then we denote $\ba^{n}$ as the numerical approximation to $\ba(t_n)$. The initial value can be obtained as 
$$
\ba_\phi^0 = \bU_\phi^T\phi_0, \quad \ba_q^0=\bU_q^T h(\phi_0).
$$
With these notations, we discuss the numerical integration for the POD-ROM.

To integrate the ROM in time, we need a structure-preserving time-integrator. The widely used time integration for structure-preserving ROM is the average vector field (AVF) method. 
However, its drawback is obvious. It is highly nonlinear and computationally expensive, and some nonlinear iterative method has to be used (which is also not easy to implement). The existence and uniqueness of the solution strongly limit the time-step size.

\subsection{Linear semi-implicit structure-preserving time integration}
Among the many advantages, our proposed POD-ROMs can be easily integrated in time with linear numerical schemes. If we take the semi-implicit Crank-Nicolson time discretization, we will have the following schemes for the POD-ROM-II in \eqref{eq:POD-ROM-II}.

\begin{scheme}[CN scheme for the POD-ROM-II] \label{sch:POD-ROM-CN}
After we calculated $\ba^{n-1}$ and $\ba^n$, we can obtain $\ba^{n+1}:=\begin{bmatrix} \ba_\phi^{n+1}\\\ \ba_q^{n+1}\end{bmatrix}$ via the following linear scheme
\beq \label{eq:POD-ROM-CN}
\bU^T \cL_h^\ast \bU \frac{\ba^{n+1} - \ba^n}{\delta t} = -\bU^T \cL_h^\ast \cN_h( \bU \overline{\ba}^{n+\frac{1}{2}})  \cL_h [\bU \ba^{n+\frac{1}{2}}],
\eeq 
with the notations $\overline{\ba}^{n+\frac{1}{2}} = \frac{3}{2}\ba^{n} - \frac{1}{2}\ba^{n-1}$ and $\ba^{n+\frac{1}{2}} = \frac{1}{2} \ba^{n+1} + \frac{1}{2}\ba^n$.
\end{scheme}

\begin{thm}
\label{thm:POD-ROM-CN}
The scheme \ref{sch:POD-ROM-CN} is linear, and it preserves the discrete energy dissipation law
\beq
\cE_r(\bU \ba^{n+1}) - \cE_r(\bU \ba^n) =  -\delta t \Big( \cL_h[ \bU \ba^{n+\frac{1}{2}}] \Big)^T  \cN_h( \bU \overline{\ba}^{n+\frac{1}{2}}) \Big( \cL_h [\bU \ba^{n+\frac{1}{2}}] \Big)
\eeq
where the discrete energy $\cE_r$ is defined as $\cE_r(\bU \ba ) = \frac{1}{2} \Big( \bU \ba \Big)^T \cL_h \Big(\bU \ba \Big)$.
\end{thm}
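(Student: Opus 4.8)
The plan is to establish the two assertions separately: first linearity (together with the implicit unique solvability that the framework requires), and then the discrete energy law, whose derivation I would model closely on the continuous computation in \eqref{eq:energy-law-EQ} and on the proof of the POD-ROM-II energy stability.

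For linearity, the key observation is that the nonlinear mobility is frozen at the extrapolated state $\overline{\ba}^{n+\frac{1}{2}} = \frac{3}{2}\ba^n - \frac{1}{2}\ba^{n-1}$, which depends only on the already-computed $\ba^n$ and $\ba^{n-1}$; hence $\cN_h(\bU\overline{\ba}^{n+\frac12})$ is a \emph{known} operator at step $n+1$. Substituting $\ba^{n+\frac12} = \frac12(\ba^{n+1}+\ba^n)$ into \eqref{eq:POD-ROM-CN} and collecting the unknown $\ba^{n+1}$ on the left, I would rewrite the scheme as $M\ba^{n+1} = \text{r.h.s.}$ with
$$M = \frac{1}{\delta t}\bU^T\cL_h^\ast\bU + \frac12\bU^T\cL_h^\ast\cN_h(\bU\overline{\ba}^{n+\frac12})\cL_h\bU,$$
which is manifestly linear in $\ba^{n+1}$. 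To confirm the scheme is well-defined I would show $M$ is positive definite: for $x\neq 0$ the first term gives $\frac{1}{\delta t}x^T\bU^T\cL_h^\ast\bU x>0$ since $\cL_h^\ast$ is symmetric positive definite and $\bU$ has full column rank, while the second term equals $\frac12 y^T\cN_h y$ with $y=\cL_h\bU x$, which is nonnegative because $\cN_h=\cN_s+\cN_a$ with $\cN_s$ semi-positive-definite and $\cN_a$ skew-symmetric, so $y^T\cN_a y=0$.

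For the energy law, I would first use that $\cL_h$ is self-adjoint ($\cL_h^\ast=\cL_h$, guaranteed by the summation-by-parts identities for the pseudo-spectral operators) to telescope the quadratic energy exactly:
$$\cE_r(\bU\ba^{n+1})-\cE_r(\bU\ba^n)=\tfrac12\big(\bU(\ba^{n+1}-\ba^n)\big)^T\cL_h\,\bU(\ba^{n+1}+\ba^n)=(\ba^{n+1}-\ba^n)^T\bU^T\cL_h^\ast\bU\,\ba^{n+\frac12},$$
where the cross terms cancel precisely because $\cL_h$ is symmetric. I would then test \eqref{eq:POD-ROM-CN} by left-multiplying with $\delta t\,(\ba^{n+\frac12})^T$; using the symmetry of $\bU^T\cL_h^\ast\bU$, the left-hand side becomes exactly the energy difference above, and the right-hand side becomes $-\delta t\,(\cL_h[\bU\ba^{n+\frac12}])^T\cN_h(\bU\overline{\ba}^{n+\frac12})\cL_h[\bU\ba^{n+\frac12}]$, which is the claimed identity. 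The sign then follows as in the continuous case: with $y=\cL_h[\bU\ba^{n+\frac12}]$, the skew part contributes $y^T\cN_a y=0$ and the symmetric part gives $y^T\cN_s y\geq0$.

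The one step that must be arranged carefully — the main obstacle — is the discrete summation-by-parts in time that produces the exact telescoping of the quadratic energy. This works precisely because the Crank-Nicolson midpoint $\ba^{n+\frac12}=\frac12(\ba^{n+1}+\ba^n)$ appears in the $\cL_h$-term and because $\cL_h$ is symmetric; were either ingredient absent, the cross terms would not cancel and the energy difference would pick up an uncontrolled remainder. By contrast, the second-order extrapolation in the mobility argument is chosen only to retain linearity and does \emph{not} interfere with the energy identity, since the dissipation sign depends solely on the $\cN_s+\cN_a$ structure of $\cN_h$, which holds at every argument. Everything else is routine bookkeeping with transposes and the block-diagonal forms of $\cL_h$ and $\bU$.
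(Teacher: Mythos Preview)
Your proposal is correct and follows essentially the same route as the paper: the paper's proof consists of the single remark that linearity is evident and that the energy identity follows by multiplying both sides of \eqref{eq:POD-ROM-CN} by $\delta t\,(\ba^{n+\frac12})^T$, which is precisely the test you describe. Your additional positive-definiteness argument for $M$ is not part of this theorem in the paper (it is stated and proved separately as the unique solvability result), but it is correct and does no harm here.
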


\begin{proof}
It is easy to observe that only a linear system needs to be solved at each time step in \eqref{eq:POD-ROM-CN}, i.e., Scheme \ref{sch:POD-ROM-CN} is linear. The energy stability equality is obtained by multiplying $\delta t (\ba^{n+\frac{1}{2}})^T$ on both sides of  \eqref{eq:POD-ROM-CN}.
\end{proof}

Similarly, if we use the semi-implicit second-order backward differential formula for the temporal discretization, we arrive at the following second-order linear numerical algorithm.

\begin{scheme}[BDF2 scheme for the POD-ROM-II] \label{sch:POD-ROM-BDF2}
After we calculated $\ba^{n-1}$ and $\ba^n$, we can obtain $\ba^{n+1}:=\begin{bmatrix} \ba_\phi^{n+1}\\ \ba_q^{n+1}\end{bmatrix}$ via the following linear scheme
\beq \label{eq:POD-ROM-BDF2}
\bU^T \cL_h^\ast \bU  \frac{3\ba^{n+1} - 4\ba^n + \ba^{n-1}}{2\delta t} = -\bU^T \cL_h^\ast \cN_h( \bU \overline{\ba}^{n+1})  \cL_h [ \bU \ba^{n+1}],
\eeq 
with the notation $\overline{\ba}^{n+1} = 2\ba^n - \ba^{n-1}$.
\end{scheme}

\begin{thm}
\label{thm:POD-ROM-BDF2}
The scheme \ref{sch:POD-ROM-BDF2} is linear, and it satisfies the following discrete energy dissipation law
\beq 
\hat{\cE}_r(\bU \ba^{n+1}, \bU \ba^n) - \hat{\cE}_r(\bU \ba^n, \bU \ba^{n-1}) \leq
-\delta t \Big( \cL_h [\bU \ba^{n+1}] \Big)^T \cN_h( \bU \overline{\ba}^{n+1})  \Big( \cL_h [\bU \ba^{n+1}] \Big),
\eeq
where the modified discrete free energy is defined as 
$$
\hat{\cE}_r(\bU \ba_1, \bU \ba_2) = \frac{1}{4} \Big[ \bU \ba_1 \Big]^T \cL_h \Big[\bU \ba_1 \Big] + \frac{1}{4} \Big[ \bU (2\ba_1-\ba_2) \Big]^T \cL_h \Big[\bU (2\ba_1 -\ba_2) \Big].
$$
\end{thm}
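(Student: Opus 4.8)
The linearity claim is immediate: in \eqref{eq:POD-ROM-BDF2} the nonlinear mobility $\cN_h$ is evaluated only at the \emph{explicit} extrapolation $\bU\overline{\ba}^{n+1}=\bU(2\ba^n-\ba^{n-1})$, which depends on the already-computed $\ba^n,\ba^{n-1}$, while $\cL_h$ is linear. Hence the only occurrence of the unknown $\ba^{n+1}$ is through the linear terms $\tfrac{3}{2\delta t}\bU^T\cL_h^\ast\bU\,\ba^{n+1}$ on the left and $-\bU^T\cL_h^\ast\cN_h(\bU\overline{\ba}^{n+1})\cL_h\bU\,\ba^{n+1}$ on the right, so \eqref{eq:POD-ROM-BDF2} is a square linear system for $\ba^{n+1}$ (well-posed because $\bU^T\cL_h^\ast\bU$ is invertible, as $\cL_h=\mathrm{diag}(\cL_{0,h},\bI)$ with $\cL_{0,h}=-\varepsilon^2\Delta_N+C$, $C>0$, is self-adjoint and positive definite; in particular $\cL_h^\ast=\cL_h$).

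The plan for energy stability is to test the scheme against $\delta t\,(\ba^{n+1})^T$. Multiplying \eqref{eq:POD-ROM-BDF2} on the left by $\delta t\,(\ba^{n+1})^T$ and using $(\ba^{n+1})^T\bU^T\cL_h^\ast=(\cL_h\bU\ba^{n+1})^T$, the right-hand side collapses exactly to the claimed dissipation term, giving the identity
\[
\tfrac12\,(\ba^{n+1})^T\bU^T\cL_h\bU\,(3\ba^{n+1}-4\ba^n+\ba^{n-1})
= -\delta t\,\big(\cL_h[\bU\ba^{n+1}]\big)^T\cN_h(\bU\overline{\ba}^{n+1})\big(\cL_h[\bU\ba^{n+1}]\big).
\]
It therefore remains to rewrite the left-hand bilinear form in terms of the modified energy $\hat{\cE}_r$. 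Writing $M=\bU^T\cL_h\bU$ (symmetric positive definite) and $\|x\|_M^2=x^T M x=(\bU x)^T\cL_h(\bU x)$, the key is the BDF2 quadratic (G-stability) identity
\[
\tfrac12\,(\ba^{n+1})^T M\,(3\ba^{n+1}-4\ba^n+\ba^{n-1})
= \hat{\cE}_r(\bU\ba^{n+1},\bU\ba^n)-\hat{\cE}_r(\bU\ba^n,\bU\ba^{n-1})
+\tfrac14\big\|\ba^{n+1}-2\ba^n+\ba^{n-1}\big\|_M^2,
\]
where I have used that $\hat{\cE}_r(\bU\ba_1,\bU\ba_2)=\tfrac14\|\ba_1\|_M^2+\tfrac14\|2\ba_1-\ba_2\|_M^2$. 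Verifying this identity is the main technical step; it reduces (after expanding the squares and using symmetry of $M$) to checking that the residual of the two sides equals the perfect square $\tfrac14\|\ba^{n+1}-2\ba^n+\ba^{n-1}\|_M^2$, a short algebraic computation I expect to be the one delicate point since the correct bookkeeping of the cross terms is what forces the precise form of $\hat{\cE}_r$.

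Combining the two displays yields
\[
\hat{\cE}_r(\bU\ba^{n+1},\bU\ba^n)-\hat{\cE}_r(\bU\ba^n,\bU\ba^{n-1})
= -\delta t\,\big(\cL_h[\bU\ba^{n+1}]\big)^T\cN_h(\bU\overline{\ba}^{n+1})\big(\cL_h[\bU\ba^{n+1}]\big)
-\tfrac14\big\|\ba^{n+1}-2\ba^n+\ba^{n-1}\big\|_M^2 .
\]
Since $M$ is positive definite the last term is nonnegative, and dropping it gives exactly the asserted inequality. (The appearance of $\leq$ rather than the equality seen for the Crank--Nicolson scheme in Theorem~\ref{thm:POD-ROM-CN} is precisely due to this nonnegative BDF2 defect; and if one further invokes that the symmetric part $\cN_s$ of $\cN_h$ is positive semi-definite while the skew part contributes nothing to the quadratic form, the right-hand side is itself $\leq 0$, yielding genuine energy decay.) The essential obstacle is thus purely the algebraic G-stability identity above; everything else is the linearity observation and the testing step.
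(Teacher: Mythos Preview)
Your proposal is correct and follows essentially the same approach as the paper: multiply \eqref{eq:POD-ROM-BDF2} by $\delta t\,(\ba^{n+1})^T$, invoke the BDF2 quadratic identity $(\tfrac{3a-4b+c}{2},a)_M=\tfrac14[\|a\|_M^2+\|2a-b\|_M^2-\|b\|_M^2-\|2b-c\|_M^2+\|a-2b+c\|_M^2]$, and drop the nonnegative defect $\tfrac14\|a-2b+c\|_M^2$. The paper states this identity directly (in inner-product notation) and then says the inequality follows; your write-up is more explicit but not different in substance.
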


\begin{proof}
Notice the inequality
\begin{equation*}
\begin{aligned}
   (\frac{3a-4b+c}{2},~a)&=\frac{1}{4} \Big[ (a,~a) + (2a-b,~2a-b) - (b,~b) - (2b-c,~2b-c) + (a-2b+c,~a-2b+c)\Big]\\
   &\geq \frac{1}{4} \Big[ (a,~a) + (2a-b,~2a-b) - (b,~b) - (2b-c,~2b-c)\Big].
\end{aligned} 
\end{equation*}

If we multiply $\delta t (\ba^{n+1})^T$ on both sides of \eqref{eq:POD-ROM-BDF2}, the energy stability inequality is obtained.
\end{proof}

\begin{thm}
The Scheme \ref{sch:POD-ROM-CN} and Scheme \ref{sch:POD-ROM-BDF2} are both linear and uniquely solvable. 
\end{thm}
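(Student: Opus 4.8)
The statement has two parts, and linearity was already recorded in Theorems \ref{thm:POD-ROM-CN} and \ref{thm:POD-ROM-BDF2}, so the real content is unique solvability. The plan is to rewrite each scheme as a single linear algebraic system $M\ba^{n+1}=b$ for the unknown coefficient vector $\ba^{n+1}$ and to show the coefficient matrix $M$ is nonsingular. For Scheme \ref{sch:POD-ROM-CN}, substituting $\ba^{n+\frac12}=\frac12(\ba^{n+1}+\ba^n)$ into \eqref{eq:POD-ROM-CN} and collecting the $\ba^{n+1}$ terms gives $M_{\mathrm{CN}}=\frac{1}{\delta t}\bU^T\cL_h^\ast\bU+\frac12\bU^T\cL_h^\ast\cN_h(\bU\overline{\ba}^{n+\frac12})\cL_h\bU$, where I stress that $\cN_h$ is evaluated at the \emph{known} extrapolated state $\overline{\ba}^{n+\frac12}=\frac32\ba^n-\frac12\ba^{n-1}$, which is precisely what makes the system linear. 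For Scheme \ref{sch:POD-ROM-BDF2} the same manipulation on \eqref{eq:POD-ROM-BDF2} yields $M_{\mathrm{BDF2}}=\frac{3}{2\delta t}\bU^T\cL_h^\ast\bU+\bU^T\cL_h^\ast\cN_h(\bU\overline{\ba}^{n+1})\cL_h\bU$. Both matrices share the common form $\alpha\,\bU^T\cL_h^\ast\bU+\beta\,\bU^T\cL_h^\ast\cN_h\cL_h\bU$ with $\alpha,\beta>0$, so a single argument covers both cases.

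Next I would assemble the structural facts on which nonsingularity rests. First, the POD modes form orthonormal columns, so $\bU$ has full column rank and $\bU v\neq 0$ whenever $v\neq 0$. Second, $\cL_h$ is self-adjoint, $\cL_h^\ast=\cL_h$, and positive definite: this follows from $\cL_{0,h}=-\varepsilon^2\Delta_N+C$ with $C>0$ together with the summation-by-parts identity that makes $-\Delta_N$ symmetric positive semi-definite. Third, the discrete mobility inherits the splitting $\cN_h=\cN_{s,h}+\cN_{a,h}$ from $\cG=\cG_s+\cG_a$, with $\cN_{s,h}$ symmetric positive semi-definite and $\cN_{a,h}$ skew-symmetric (these properties hold at any evaluation point and hence at $\overline{\ba}$).

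The crux is to show that the symmetric part of $M$ is positive definite, which forces $M$ to be nonsingular. Writing $W=\cL_h\bU$ and using $\cL_h^\ast=\cL_h$, the mobility term is the congruence $W^T\cN_h W$; its symmetric part is $W^T\cN_{s,h}W\succeq 0$, while $W^T\cN_{a,h}W$ is skew-symmetric and therefore contributes nothing to any quadratic form $v^T(\cdot)v$. For the mass term, $\bU^T\cL_h\bU$ is symmetric, and for $v\neq0$ we have $v^T\bU^T\cL_h\bU v=(\bU v)^T\cL_h(\bU v)>0$ by full column rank of $\bU$ and positive definiteness of $\cL_h$. Combining these, for any $v\neq 0$,
\beq
v^T M v=\alpha\,(\bU v)^T\cL_h(\bU v)+\beta\,(W v)^T\cN_{s,h}(W v)\ \geq\ \alpha\,(\bU v)^T\cL_h(\bU v)>0,
\eeq
so the symmetric part $\tfrac12(M+M^T)$ is symmetric positive definite; hence $Mv=0$ implies $v=0$ and $M$ is invertible. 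The linear system therefore admits a unique solution $\ba^{n+1}$ at every step, for both schemes and for every $\delta t>0$.

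The main obstacle is the strict positivity of the mass term $\bU^T\cL_h\bU$: the argument uses $C>0$ essentially, since with $C=0$ the constant Fourier mode lies in the kernel of $\cL_{0,h}$ and one would only obtain semi-definiteness, forcing a more delicate analysis that also exploits the mobility block. The remaining points are bookkeeping — checking that the symmetric/skew splitting of $\cN_h$ survives the congruence $W^T\cN_h W$ (it does, since congruence preserves both symmetry and skew-symmetry) and invoking the standard fact that a real matrix whose symmetric part is positive definite is automatically nonsingular, even though $M$ itself is not symmetric when $\cG_a\neq 0$.
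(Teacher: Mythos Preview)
Your proposal is correct and follows essentially the same route as the paper: rewrite each scheme as a linear system with coefficient matrix $\alpha\,\bU^T\cL_h^\ast\bU+\beta\,\bU^T\cL_h^\ast\cN_h\cL_h\bU$, then show $v^TMv>0$ for $v\neq0$ by bounding below with the strictly positive mass term $\alpha\,\bU^T\cL_h\bU$. You are more explicit than the paper about the symmetric/skew splitting of $\cN_h$ and the role of $C>0$ in making $\cL_{0,h}$ strictly positive definite, but the argument is the same.
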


\begin{proof}
Given the similarity between Scheme \ref{sch:POD-ROM-CN} and Scheme \ref{sch:POD-ROM-BDF2}, we only show the proof for Scheme \ref{sch:POD-ROM-CN}. After some basic algebraic calculation, we can rewrite Scheme \ref{sch:POD-ROM-CN} as
\beq
\Big[\frac{1}{\delta t} \bU^T \cL_h^\ast \bU +\frac{1}{2} \bU^T \cL_h^\ast \cN_h(\bU \overline{\ba}^{n+\frac{1}{2}}) \cL_h\bU \Big]\ba^{n+1} = \frac{1}{\delta t} \bU^T \cL_h^\ast [\bU  \ba^n] -\frac{1}{2} \bU^T \cL_h^\ast \cN(\bU \overline{\ba}^{n+\frac{1}{2}}) \cL_h[\bU \ba^n].
\eeq 

To show the existence and uniqueness of the solution for \eqref{eq:POD-ROM-CN}, we only need to show there is only a zero solution for
$$
A\ba^{n+1} =0, \quad\text{where}\quad A = \frac{1}{\delta t} \bU^T \cL_h^\ast \bU +\frac{1}{2} \bU^T \cL_h^\ast \cN_h(\bU \overline{\ba}^{n+\frac{1}{2}}) \cL_h\bU .
$$
If $\ba$ is a solution to $A\ba^{n+1}=0$, we have
$$
0 =  \ba^T A \ba = \frac{ \ba^T \bU^T \cL^\ast_h \bU \ba}{\delta t} +  \frac{1}{2}\ba^T \Big[  \bU^T \cL_h^\ast \cN_h( \bU \overline{\ba}^{n+\frac{1}{2}})  \cL_h \bU \Big] \ba \geq  \frac{ \ba^T \bU^T \cL^\ast_h \bU \ba}{\delta t},
$$
i.e. $\ba=0$, by noticing $\bU^T \cL^\ast_h \bU$ is a positive definite matrix.  So, $A$ is invertible, and there is a unique solution for Scheme \ref{eq:POD-ROM-CN}. Furthermore, we have shown that $A$ is a positive definite matrix.
\end{proof}

For the POD-ROM-I in \eqref{eq:POD-ROM-I}, similar numerical techniques can be applied. Specifically, we have the following time-marching linear numerical schemes that will also be used for comparisons in the numerical result section.
\begin{scheme}[CN scheme for POD-ROM-I] \label{scheme:CN-ApproachI}
The CN scheme for \eqref{eq:POD-ROM-I} reads
\beq 
\frac{\ba^{n+1} - \ba^n}{\delta t} =  - \bU^T \cN_h( \bU \overline{\ba}^{n+\frac{1}{2}}) \bU \bU^T \cL_h[ \bU \ba^{n+\frac{1}{2}}].
\eeq 
\end{scheme}

\begin{scheme}[BDF2 scheme for POD-ROM-I] \label{scheme:BDF2-ApproachI}
The BDF2 scheme for \eqref{eq:POD-ROM-I} reads 
\beq
\frac{3\ba^{n+1} - 4\ba^n + \ba^{n-1}}{2\delta t} = - \bU^T \cN_h( \bU \overline{\ba}^{n+1}) \bU \bU^T \cL_h[ \bU \ba^{n+1}].
\eeq 
\end{scheme}

From the discussion above, we conclude that Scheme \ref{sch:POD-ROM-CN} and Scheme \ref{sch:POD-ROM-BDF2} are linear, uniquely solvable, and preserve the energy dissipation structure. However, they both have one defect.  We recall the original definition for $q$ in \eqref{eq:EQ-intermediate-variable}, denoting $q:=h(\phi)$. After temporal discretization, the numerical result $\bU_q \ba_q^{n+1}$ from Scheme \ref{sch:POD-ROM-CN} or Scheme \ref{sch:POD-ROM-BDF2} is not necessarily equal to $h(\bU_\phi \ba_\phi^{n+1})$ anymore, which leads to numerical error.

\subsection{Relaxation technique to improve the accuracy}

One remedy to fix the issue above is to utilize the relaxation technique we proposed to improve the accuracy and stability of EQ and SAV methods \cite{ZhaoIEQrelax, Jiang-SAV-relax}. Namely, We can relax the numerical solution $\bU_q \ba_q^{n+1}$ from Scheme \ref{sch:POD-ROM-CN} and Scheme \ref{sch:POD-ROM-BDF2} such that the numerical error $ \| \bU_q \ba_q^{n+1} - h(\bU_\phi \ba_\phi^{n+1}) \|$ will be damped to zero gradually. Therefore, we develop the following relaxed schemes, which will be used in this paper to conduct numerical simulations.

\begin{scheme}[Relaxed CN scheme for the POD-ROM-II] \label{sch:Relaxed-POD-ROM-CN}
After we calculated $\ba^{n-1}$ and $\ba^n$, we can obtain $\ba^{n+1}$ via the following three steps:
\begin{itemize}
\item Step 1. Obtain $\hat{\ba}^{n+1}:=\begin{bmatrix} \ba_\phi^{n+1}\\\hat{\ba}_q^{n+1}\end{bmatrix}$ via the linear scheme
\beq \label{eq:Relaxed-POD-ROM-CN}
\bU^T \cL_h^\ast \bU \frac{\hat{\ba}^{n+1} - \ba^n}{\delta t} = -\bU^T \cL_h^\ast \cN_h( \bU \overline{\ba}^{n+\frac{1}{2}})  \cL_h [ \bU \hat{\ba}^{n+\frac{1}{2}}],
\eeq 
with the notations $\overline{\ba}^{n+\frac{1}{2}} = \frac{3}{2}\ba^{n} - \frac{1}{2}\ba^{n-1}$ and $\hat{\ba}^{n+\frac{1}{2}} = \frac{1}{2} \hat{\ba}^{n+1} + \frac{1}{2}\ba^n$.
\item Step 2. Update $\bU_q\ba_q^{n+1}$ via the relaxation strategy 
$$
\bU_q\ba_q^{n+1} = \xi_0 \bU_q\hat{\ba}_q^{n+1} + (1-\xi_0) h(\bU_\phi\ba_\phi^{n+1}), \quad \xi_0 \in [0, 1],
$$
where $\xi_0$ is a solution for the optimization problem:
\beq
\label{eq:relaxed-CN-xi}
\begin{aligned}
\xi_0=\min_{\xi\in[0,1]}\xi, \quad\text{s.t.}~&\frac{1}{2}(\bU_q\ba_q^{n+1},~\bU_q\ba_q^{n+1})-\frac{1}{2}(\bU_q\hat{\ba}_q^{n+1},~\bU_q\hat{\ba}_q^{n+1})\\
&\leq\delta t\eta\Big(\cL_h[\bU\hat{\ba}^{n+\frac{1}{2}}],~\cN_h(\bU\overline{\ba}^{n+\frac{1}{2}})\cL_h[\bU\hat{\ba}^{n+\frac{1}{2}}]\Big),
\end{aligned}
\eeq
with an artificial parameter $\eta\in[0,1]$ that can be assigned.
\item Step 3. Update $\ba^{n+1}$ as  $\ba^{n+1}:=\begin{bmatrix} \ba_\phi^{n+1}\\ \ba_q^{n+1}\end{bmatrix}$.
\end{itemize}

\end{scheme}

\begin{rem}
The Scheme \ref{sch:Relaxed-POD-ROM-CN} relaxes the solution $\bU_q\hat{\ba}_q^{n+1}$ to get $\bU_q\ba_q^{n+1}$ which is closer to the original definition of $q$ and then we obtain $\ba_q^{n+1}$ by $\bU_q^T\bU_q\ba_q^{n+1}$. Since $\bU_q^T\bU_q=\bI_r$, the relaxation strategy in Step 2 is equivalent to relaxing the numerical solution $\hat{\ba}_q^{n+1}$ via 
$$
\ba_q^{n+1}=\xi_0\hat{\ba}_q^{n+1}+(1-\xi_0)\bU_q^Th(\bU_\phi\ba_\phi^{n+1}).
$$
\end{rem}

\begin{rem}[Optimal choice for $\xi_0$] We emphasize that the optimization problem in \eqref{eq:relaxed-CN-xi} can be simplified as the following algebraic optimization problem
\begin{equation*}
\xi_0=\min_{\xi\in[0,1],~a\xi^2+b\xi+c\leq0}\xi,
\end{equation*}
where the coefficients are given by
\begin{equation*}
\begin{aligned}
& a= \frac{1}{2}\Big(\bU_q\hat{\ba}_q^{n+1}-h(\bU_\phi\ba_\phi^{n+1}),~\bU_q\hat{\ba}_q^{n+1}-h(\bU_\phi\ba_\phi^{n+1})\Big), \\
& b=\Big(h(\bU_\phi\ba_\phi^{n+1}),~\bU_q\hat{\ba}_q^{n+1}-h(\bU_\phi\ba_\phi^{n+1})\Big),\\
& c = \frac{1}{2}\Big(h(\bU_\phi\ba_\phi^{n+1}),~h(\bU_\phi\ba_\phi^{n+1})\Big)-\frac{1}{2}\Big(\bU_q\hat{\ba}_q^{n+1},~\bU_q\hat{\ba}_q^{n+1}\Big)-\delta t\eta\Big(\cL_h[\bU\hat{\ba}^{n+\frac{1}{2}}],~\cN_h(\bU\overline{\ba}^{n+\frac{1}{2}})\cL_h[\bU\hat{\ba}^{n+\frac{1}{2}}]\Big).
\end{aligned}
\end{equation*}
We emphasize that the solution set is nonempty since $\xi=1$ is in the feasible domain. Also, notice that $a+b+c>0$ and $\delta t\eta\Big(\cL_h[\bU\hat{\ba}^{n+\frac{1}{2}}],~\cN_h(\bU\overline{\ba}^{n+\frac{1}{2}})\cL_h[\bU\hat{\ba}^{n+\frac{1}{2}}]\Big)>0$. With $a>0$, the optimization problem in \eqref{eq:relaxed-CN-xi} can be solved as
\[\xi_0=\max\{0,\frac{-b-\sqrt{b^2-4ac}}{2a}\}.\]
\end{rem}

\begin{thm}
The Scheme \ref{sch:Relaxed-POD-ROM-CN} is unconditionally energy stable.
\end{thm}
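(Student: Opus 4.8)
The plan is to exploit the two-stage structure of Scheme \ref{sch:Relaxed-POD-ROM-CN}: its Step 1 is literally the linear Crank--Nicolson scheme for POD-ROM-II (Scheme \ref{sch:POD-ROM-CN}, with $\ba^{n+1}$ renamed $\hat{\ba}^{n+1}$), whose exact energy law is already furnished by Theorem \ref{thm:POD-ROM-CN}, while Step 2 only overwrites the $q$-component. The key structural fact is that, because $\cL_h=\begin{bmatrix}\cL_{0,h}&0\\0&\bI\end{bmatrix}$ is block-diagonal and $\bU=\begin{bmatrix}\bU_\phi&0\\0&\bU_q\end{bmatrix}$, the reduced energy splits additively,
$$\cE_r(\bU\ba)=\frac{1}{2}\Big(\bU_\phi\ba_\phi,~\cL_{0,h}\bU_\phi\ba_\phi\Big)+\frac{1}{2}\Big(\bU_q\ba_q,~\bU_q\ba_q\Big),$$
so the first term sees only $\ba_\phi$ and the second only $\ba_q$.

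First I would apply Theorem \ref{thm:POD-ROM-CN} to the intermediate iterate $\hat{\ba}^{n+1}$ to obtain the exact identity
$$\cE_r(\bU\hat{\ba}^{n+1})-\cE_r(\bU\ba^n)=-\delta t\,D,\qquad D:=\Big(\cL_h[\bU\hat{\ba}^{n+\frac{1}{2}}],~\cN_h(\bU\overline{\ba}^{n+\frac{1}{2}})\cL_h[\bU\hat{\ba}^{n+\frac{1}{2}}]\Big).$$
Since the discrete mobility inherits the Onsager splitting $\cN_h=\cN_{s,h}+\cN_{a,h}$ with $\cN_{s,h}$ symmetric positive semi-definite and $\cN_{a,h}$ skew-symmetric, the skew part annihilates in the quadratic form and $D\geq 0$.

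Next, because Step 2 leaves $\ba_\phi^{n+1}$ fixed and replaces only $\bU_q\hat{\ba}_q^{n+1}$ by $\bU_q\ba_q^{n+1}$, the additive splitting yields
$$\cE_r(\bU\ba^{n+1})-\cE_r(\bU\hat{\ba}^{n+1})=\frac{1}{2}\Big(\bU_q\ba_q^{n+1},~\bU_q\ba_q^{n+1}\Big)-\frac{1}{2}\Big(\bU_q\hat{\ba}_q^{n+1},~\bU_q\hat{\ba}_q^{n+1}\Big),$$
which is exactly the left-hand side of the constraint in the optimization \eqref{eq:relaxed-CN-xi}. That constraint is feasible ($\xi=1$ returns $\hat{\ba}_q^{n+1}$ and makes the left-hand side vanish, so the minimizer $\xi_0$ exists), and it bounds this energy change above by $\delta t\,\eta\,D$.

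Finally I would add the two relations:
$$\cE_r(\bU\ba^{n+1})-\cE_r(\bU\ba^n)\leq \delta t\,\eta\,D-\delta t\,D=-(1-\eta)\,\delta t\,D\leq 0,$$
which holds for every $\delta t>0$ because $\eta\in[0,1]$ and $D\geq0$; the total absence of a step-size restriction is precisely the meaning of unconditional energy stability. I expect the analysis to be essentially free; the only real care is bookkeeping---making sure the dissipation $D$ produced by Theorem \ref{thm:POD-ROM-CN} is built from the \emph{same} intermediate midpoint $\hat{\ba}^{n+\frac{1}{2}}$ and mobility argument $\overline{\ba}^{n+\frac{1}{2}}$ that appear in the optimization constraint, so the two contributions cancel with the correct sign. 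The one place the thermodynamic hypotheses genuinely enter is the sign $D\geq0$, which rests on the semi-definiteness of the symmetric part of $\cN_h$.
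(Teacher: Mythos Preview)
Your proof is correct and follows essentially the same route as the paper: apply Theorem \ref{thm:POD-ROM-CN} to Step 1, use the block-diagonal splitting of $\cE_r$ so that the relaxation affects only the $q$-contribution, invoke the optimization constraint \eqref{eq:relaxed-CN-xi}, and add. Your additional remarks on why $D\geq 0$ (via the symmetric/skew splitting of $\cN_h$) and on the feasibility of $\xi=1$ are accurate and slightly more explicit than the paper's presentation, but the argument is the same.
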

\begin{proof}
According to the Theorem \ref{thm:POD-ROM-CN}, the first step of Scheme \ref{sch:Relaxed-POD-ROM-CN} gives us the following energy dissipation law
\begin{equation*}
\begin{aligned}
&\frac{1}{2} \Big[\Big(\bU_\phi\ba_\phi^{n+1},~\cL_{0,h} \bU_\phi \ba_\phi^{n+1} \Big) + \Big(\bU_q\hat{\ba}_q^{n+1},~\bU_q\hat{\ba}_q^{n+1}\Big)\Big] - \frac{1}{2} \Big[\Big(\bU_\phi\ba_\phi^{n},~\cL_{0,h} \bU_\phi \ba_\phi^{n} \Big) + \Big(\bU_q\ba_q^{n},~\bU_q\ba_q^{n}\Big)\Big] \\
&=-\delta t \Big( \cL_h( \bU \hat{\ba}^{n+\frac{1}{2}}) \Big)^T  \cN_h( \bU \overline{\ba}^{n+\frac{1}{2}}) \Big( \cL_h [ \bU \hat{\ba}^{n+\frac{1}{2}}] \Big).
\end{aligned}
\end{equation*}
Also, from \eqref{eq:relaxed-CN-xi}, we know
\begin{equation*}
\frac{1}{2}(\bU_q\ba_q^{n+1},~\bU_q\ba_q^{n+1})-\frac{1}{2}(\bU_q\hat{\ba}_q^{n+1},~\bU_q\hat{\ba}_q^{n+1})\leq\delta t\eta\Big(\cL_h[\bU\hat{\ba}^{n+\frac{1}{2}}],~\cN_h(\bU\overline{\ba}^{n+\frac{1}{2}})\cL_h[\bU\hat{\ba}^{n+\frac{1}{2}}]\Big).
\end{equation*}
Adding the above two equations together gives us the following energy dissipation law
\beq
\begin{aligned}
&\frac{1}{2} \Big[\Big(\bU_\phi\ba_\phi^{n+1},~\cL_{0,h} \bU_\phi \ba_\phi^{n+1} \Big) + \Big(\bU_q\ba_q^{n+1},~\bU_q\ba_q^{n+1}\Big)\Big] - \frac{1}{2} \Big[\Big(\bU_\phi\ba_\phi^{n},~\cL_{0,h} \bU_\phi \ba_\phi^{n} \Big) + \Big(\bU_q\ba_q^{n},~\bU_q\ba_q^{n}\Big)\Big] \\
&\leq-\delta t(1-\eta) \Big( \cL_h( \bU \hat{\ba}^{n+\frac{1}{2}}) \Big)^T  \cN_h( \bU \overline{\ba}^{n+\frac{1}{2}}) \Big( \cL_h [ \bU \hat{\ba}^{n+\frac{1}{2}}] \Big)\leq0.
\end{aligned}
\eeq
\end{proof}
 
\begin{scheme}[Relaxed BDF2 scheme for the POD-ROM-II] \label{sch:Relaxed-POD-ROM-BDF2}
After we calculated $\ba^{n-1}$ and $\ba^n$, we can obtain $\ba^{n+1}$ via the following three steps:
\begin{itemize}
\item Step 1. Obtain $\hat{\ba}^{n+1}:=\begin{bmatrix} \ba_\phi^{n+1}\\ \hat{\ba}_q^{n+1}\end{bmatrix}$ via the linear scheme
\beq \label{eq:Relaxed-POD-ROM-BDF2}
\bU^T \cL_h^\ast \bU  \frac{3 \hat{\ba}^{n+1} - 4\ba^n + \ba^{n-1}}{2\delta t} = -\bU^T \cL_h^\ast \cN_h( \bU \overline{\ba}^{n+1})  \cL_h [ \bU \hat{\ba}^{n+1}],
\eeq 
with the notation $\overline{\ba}^{n+1} = 2\ba^n - \ba^{n-1}$.
\item Step 2. Update $\bU_q\ba_q^{n+1}$ via the relaxation strategy
$$
\bU_q\ba_q^{n+1} = \xi_0 \bU_q\hat{\ba}_q^{n+1} + (1-\xi_0) h(\bU_\phi\ba_\phi^{n+1}), \quad \xi_0 \in [0, 1],
$$
where $\xi_0$ is a solution for the optimization problem:
\beq
\label{eq:relaxed-BDF2-xi}
\begin{aligned}
\xi_0=\min_{\xi\in[0,1]}\xi, \quad\text{s.t.}~&\frac{1}{4}\Big((\bU_q\ba_q^{n+1},~\bU_q\ba_q^{n+1})+(2\bU_q\ba_q^{n+1}-\bU_q\ba_q^n,~2\bU_q\ba_q^{n+1}-\bU_q\ba_q^n)\Big)\\
&-\frac{1}{4}\Big((\bU_q\hat{\ba}_q^{n+1},~\bU_q\hat{\ba}_q^{n+1})+(2\bU_q\hat{\ba}_q^{n+1}-\bU_q\ba_q^n,~2\bU_q\hat{\ba}_q^{n+1}-\bU_q\ba_q^n)\Big)\\
&\leq\delta t\eta\Big(\cL_h[\bU\hat{\ba}^{n+1}],~\cN_h(\bU\overline{\ba}^{n+1})\cL_h[\bU\hat{\ba}^{n+1}]\Big),
\end{aligned}
\eeq
with an artificial parameter $\eta\in[0,1]$ that can be assigned.
\item Step 3. Update $\ba^{n+1}$ as  $\ba^{n+1}:=\begin{bmatrix} \ba_\phi^{n+1}\\ \ba_q^{n+1}\end{bmatrix}$.
\end{itemize}

\end{scheme}

\begin{rem}[Optimal choice for $\xi_0$] In a similar manner, the optimization problem in \eqref{eq:relaxed-BDF2-xi} can be simplified as the following algebraic optimization problem
\begin{equation*}
\xi_0=\min_{\xi\in[0,1],~a\xi^2+b\xi+c\leq0}\xi,
\end{equation*}
where the coefficients are given by
\begin{equation*}
\begin{aligned}
a= &\frac{5}{4}\Big(\bU_q\hat{\ba}_q^{n+1}-h(\bU_\phi\ba_\phi^{n+1}),~\bU_q\hat{\ba}_q^{n+1}-h(\bU_\phi\ba_\phi^{n+1})\Big), \\
b=& \frac{1}{2}\Big(\bU_q\hat{\ba}_q^{n+1}-h(\bU_\phi\ba_\phi^{n+1}),~5h(\bU_\phi\ba_\phi^{n+1})-2\bU_q a_q^n\Big),\\
c = &\frac{1}{4}\Big[\Big(h(\bU_\phi\ba_\phi^{n+1}),~h(\bU_\phi\ba_\phi^{n+1})\Big)+\Big(2h(\bU_\phi\ba_\phi^{n+1})-\bU_q\ba_q^n,~2h(\bU_\phi\ba_\phi^{n+1})-\bU_q\ba_q^n\Big)\\
&-\Big(\bU_q\hat{\ba}_q^{n+1},~\bU_q\hat{\ba}_q^{n+1}\Big)-\Big(2\bU_q\hat{\ba}_q^{n+1}-\bU_q\ba_q^n,~2\bU_q\hat{\ba}_q^{n+1}-\bU_q\ba_q^n\Big)\Big]\\
&-\delta t\eta\Big(\cL_h[\bU\hat{\ba}^{n+1}],~\cN_h(\bU\overline{\ba}^{n+1})\cL_h[\bU\hat{\ba}^{n+1}]\Big).
\end{aligned}
\end{equation*}
We emphasize that the solution set is nonempty since $\xi=1$ is in the feasible domain. Also, notice that $a+b+c>0$ and $\delta t\eta\Big(\cL_h[\bU\hat{\ba}^{n+1}],~\cN_h(\bU\overline{\ba}^{n+1})\cL_h[\bU\hat{\ba}^{n+1}]\Big)>0$. With $a>0$, the optimization problem in \eqref{eq:relaxed-BDF2-xi} can be solved as
\[\xi_0=\max\{0,\frac{-b-\sqrt{b^2-4ac}}{2a}\}.\]
\end{rem}

\begin{thm}
The Scheme \ref{sch:Relaxed-POD-ROM-BDF2} is unconditionally energy stable.
\end{thm}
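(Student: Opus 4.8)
The plan is to mirror the argument used for the relaxed CN scheme (Theorem for Scheme \ref{sch:Relaxed-POD-ROM-CN}), now built on the modified two-step energy $\hat{\cE}_r$ introduced in Theorem \ref{thm:POD-ROM-BDF2} rather than the single-step energy $\cE_r$. The decisive structural fact is that $\cL_h$ and $\bU$ are block diagonal in the $(\phi,q)$ splitting, so that for any coefficient vector $\ba=\begin{bmatrix}\ba_\phi\\ \ba_q\end{bmatrix}$ one has
\[
[\bU\ba]^T\cL_h[\bU\ba]=\big(\bU_\phi\ba_\phi,\cL_{0,h}\bU_\phi\ba_\phi\big)+\big(\bU_q\ba_q,\bU_q\ba_q\big),
\]
hence $\hat{\cE}_r$ splits into a $\phi$-part and a $q$-part. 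Since the relaxation step (Step 2) alters only the $q$-coefficient while freezing $\ba_\phi^{n+1}$, the $\phi$-part of $\hat{\cE}_r$ is untouched by the relaxation.

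First I would apply Theorem \ref{thm:POD-ROM-BDF2} verbatim to the linear solve in Step 1, whose output is $\hat{\ba}^{n+1}=\begin{bmatrix}\ba_\phi^{n+1}\\ \hat{\ba}_q^{n+1}\end{bmatrix}$. This yields
\[
\hat{\cE}_r(\bU\hat{\ba}^{n+1},\bU\ba^n)-\hat{\cE}_r(\bU\ba^n,\bU\ba^{n-1})\le -\delta t\,\cD,\qquad \cD:=\big(\cL_h[\bU\hat{\ba}^{n+1}]\big)^T\cN_h(\bU\overline{\ba}^{n+1})\big(\cL_h[\bU\hat{\ba}^{n+1}]\big).
\]
Next I would invoke the relaxation constraint \eqref{eq:relaxed-BDF2-xi}, which is feasible since $\xi=1$ (i.e. $\ba_q^{n+1}=\hat{\ba}_q^{n+1}$) lies in the admissible set. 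By the block splitting above, and because $\ba_\phi^{n+1}$ and $\bU_q\ba_q^n$ are common to both states, the left-hand side of \eqref{eq:relaxed-BDF2-xi} is exactly the change in the $q$-part of $\hat{\cE}_r$ under relaxation, namely $\hat{\cE}_r(\bU\ba^{n+1},\bU\ba^n)-\hat{\cE}_r(\bU\hat{\ba}^{n+1},\bU\ba^n)$, so that
\[
\hat{\cE}_r(\bU\ba^{n+1},\bU\ba^n)-\hat{\cE}_r(\bU\hat{\ba}^{n+1},\bU\ba^n)\le \delta t\,\eta\,\cD.
\]

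Adding the two displayed inequalities telescopes the intermediate term $\hat{\cE}_r(\bU\hat{\ba}^{n+1},\bU\ba^n)$ and gives
\[
\hat{\cE}_r(\bU\ba^{n+1},\bU\ba^n)-\hat{\cE}_r(\bU\ba^n,\bU\ba^{n-1})\le -\delta t\,(1-\eta)\,\cD.
\]
To close the argument I would observe that $\cD\ge 0$: writing $\cN_h=\cN_{s,h}+\cN_{a,h}$ as the discrete analogue of \eqref{eq:nonlinear-terms}, with $\cN_{a,h}$ skew-symmetric and $\cN_{s,h}=\cN_0^\ast\cG_s\cN_0$ positive semi-definite (since $\cG_s$ is positive semi-definite), the skew part contributes nothing to the quadratic form, so $\cD=\big(\cL_h[\bU\hat{\ba}^{n+1}]\big)^T\cN_{s,h}\big(\cL_h[\bU\hat{\ba}^{n+1}]\big)\ge 0$. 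As $\eta\in[0,1]$, the right-hand side is $\le 0$, establishing unconditional energy stability.

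The only genuinely delicate point is the bookkeeping in the second step: verifying that the left-hand side of the BDF2 relaxation constraint \eqref{eq:relaxed-BDF2-xi} coincides precisely with the change in the $q$-part of $\hat{\cE}_r$ induced by the relaxation. This amounts to checking that the two quadratic groupings $(\cdot,\cdot)+(2\cdot-\bU_q\ba_q^n,\,2\cdot-\bU_q\ba_q^n)$ in \eqref{eq:relaxed-BDF2-xi} reproduce exactly the $q$-block of $\hat{\cE}_r(\cdot,\bU\ba^n)$ with the shared second argument $\bU\ba^n$. Everything else — the cancellation of the $\phi$-block, the sign of $\cD$, and the nonemptiness of the feasible set for $\xi_0$ — is routine and parallels the relaxed CN case.
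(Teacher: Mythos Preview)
Your proposal is correct and follows essentially the same approach as the paper: apply Theorem \ref{thm:POD-ROM-BDF2} to Step 1, invoke the relaxation constraint \eqref{eq:relaxed-BDF2-xi} for Step 2, and add the two inequalities so the intermediate energy telescopes, leaving $-\delta t(1-\eta)\cD\le 0$. Your version is in fact slightly more explicit than the paper's about the block-diagonal splitting, the identification of the constraint's left-hand side with the $q$-part of $\hat{\cE}_r$, and the nonnegativity of $\cD$, but the argument is the same.
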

\begin{proof}
According to the Theorem \ref{thm:POD-ROM-BDF2}, the first step of Scheme \ref{sch:Relaxed-POD-ROM-BDF2} gives us the following energy dissipation law
\begin{equation*}
\begin{aligned}
&\frac{1}{4} \Big[\Big(\bU_\phi\ba_\phi^{n+1},~\cL_{0,h} \bU_\phi \ba_\phi^{n+1} \Big) + \Big(2\bU_\phi\ba_\phi^{n+1}-\bU_\phi\ba_q^n,~\cL_{0,h}(2\bU_\phi\ba_\phi^{n+1}-\bU_\phi\ba_\phi^n)\Big)\\
&+\Big(\bU_q\hat{\ba}_q^{n+1},~\bU_q\hat{\ba}_q^{n+1}\Big) + \Big(2\bU_q\hat{\ba}_q^{n+1}-\bU_q\ba_q^n,~2\bU_q\hat{\ba}_q^{n+1}-\bU_q\ba_q^n\Big)\Big] \\
&-\frac{1}{4} \Big[\Big(\bU_\phi\ba_\phi^n,~\cL_{0,h} \bU_\phi \ba_\phi^n \Big) + \Big(2\bU_\phi\ba_\phi^n-\bU_\phi\ba_q^{n-1},~\cL_{0,h}(2\bU_\phi\ba_\phi^n-\bU_\phi\ba_\phi^{n-1})\Big)\\
&+\Big(\bU_q\ba_q^n,~\bU_q\ba_q^n\Big) + \Big(2\bU_q\ba_q^n-\bU_q\ba_q^{n-1},~2\bU_q\ba_q^n-\bU_q\ba_q^{n-1}\Big)\Big] \\
&\leq-\delta t \Big( \cL_h( \bU \hat{\ba}^{n+1}) \Big)^T  \cN_h( \bU \overline{\ba}^{n+1}) \Big( \cL_h [ \bU \hat{\ba}^{n+1}] \Big).
\end{aligned}
\end{equation*}
Also, from \eqref{eq:relaxed-BDF2-xi}, we know
\begin{multline*}
\frac{1}{4}\Big((\bU_q\ba_q^{n+1},~\bU_q\ba_q^{n+1})+(2\bU_q\ba_q^{n+1}-\bU_q\ba_q^n,~2\bU_q\ba_q^{n+1}-\bU_q\ba_q^n)\Big) \\
- \frac{1}{4}\Big((\bU_q\hat{\ba}_q^{n+1},~\bU_q\hat{\ba}_q^{n+1})+(2\bU_q\hat{\ba}_q^{n+1}-\bU_q\ba_q^n,~2\bU_q\hat{\ba}_q^{n+1}-\bU_q\ba_q^n)\Big)\\
\leq\delta t\eta\Big(\cL_h[\bU\hat{\ba}^{n+1}],~\cN_h(\bU\overline{\ba}^{n+1})\cL_h[\bU\hat{\ba}^{n+1}]\Big).  
\end{multline*}

Adding the above two equations together gives us the following energy dissipation law
\beq
\begin{aligned}
&\frac{1}{4} \Big[\Big(\bU_\phi\ba_\phi^{n+1},~\cL_{0,h} \bU_\phi \ba_\phi^{n+1} \Big) + \Big(2\bU_\phi\ba_\phi^{n+1}-\bU_\phi\ba_q^n,~\cL_{0,h}(2\bU_\phi\ba_\phi^{n+1}-\bU_\phi\ba_\phi^n)\Big)\\
&+\Big(\bU_q\ba_q^{n+1},~\bU_q\ba_q^{n+1}\Big) + \Big(2\bU_q\ba_q^{n+1}-\bU_q\ba_q^n,~2\bU_q\ba_q^{n+1}-\bU_q\ba_q^n\Big)\Big] \\
&-\frac{1}{4} \Big[\Big(\bU_\phi\ba_\phi^n,~\cL_{0,h} \bU_\phi \ba_\phi^n \Big) + \Big(2\bU_\phi\ba_\phi^n-\bU_\phi\ba_q^{n-1},~\cL_{0,h}(2\bU_\phi\ba_\phi^n-\bU_\phi\ba_\phi^{n-1})\Big)\\
&+\Big(\bU_q\ba_q^n,~\bU_q\ba_q^n\Big) + \Big(2\bU_q\ba_q^n-\bU_q\ba_q^{n-1},~2\bU_q\ba_q^n-\bU_q\ba_q^{n-1}\Big)\Big] \\
&\leq-\delta t(1-\eta) \Big( \cL_h( \bU \hat{\ba}^{n+1}) \Big)^T  \cN_h( \bU \overline{\ba}^{n+1}) \Big( \cL_h [ \bU \hat{\ba}^{n+1}] \Big).
\end{aligned}
\eeq
\end{proof}

Similar relaxation techniques can also be applied to the numerical schemes for POD-ROM-I in Scheme \ref{scheme:CN-ApproachI} and Scheme \ref{scheme:BDF2-ApproachI}. For instance, we can have the following relaxed CN scheme.
\begin{scheme}[Relaxed CN scheme for the POD-ROM-I] \label{sch:Relaxed-POD-ROM-CN-I}
After we calculated $\ba^{n-1}$ and $\ba^n$, we can obtain $\ba^{n+1}$ via the following three steps:
\begin{itemize}
\item Step 1. Obtain $\hat{\ba}^{n+1}:=\begin{bmatrix} \ba_\phi^{n+1}\\\hat{\ba}_q^{n+1}\end{bmatrix}$ via the linear scheme
\beq \label{eq:Relaxed-POD-ROM-CN-I}
\frac{\hat{\ba}^{n+1} - \ba^n}{\delta t} = -\bU^T \cN_h( \bU \overline{\ba}^{n+\frac{1}{2}}) \bU \bU^T  \cL_h [ \bU \hat{\ba}^{n+\frac{1}{2}}],
\eeq 
with the notations $\overline{\ba}^{n+\frac{1}{2}} = \frac{3}{2}\ba^{n} - \frac{1}{2}\ba^{n-1}$ and $\hat{\ba}^{n+\frac{1}{2}} = \frac{1}{2} \hat{\ba}^{n+1} + \frac{1}{2}\ba^n$.
\item Step 2. Update $\bU_q\ba_q^{n+1}$ via the relaxation strategy 
$$
\bU_q\ba_q^{n+1} = \xi_0 \bU_q\hat{\ba}_q^{n+1} + (1-\xi_0) h(\bU_\phi\ba_\phi^{n+1}), \quad \xi_0 \in [0, 1],
$$
where $\xi_0$ is a solution for the optimization problem:
\beq
\label{eq:relaxed-CN-xi-I}
\begin{aligned}
\xi_0=\min_{\xi\in[0,1]}\xi, \quad\text{s.t.}~&\frac{1}{2}(\bU_q\ba_q^{n+1},~\bU_q\ba_q^{n+1})-\frac{1}{2}(\bU_q\hat{\ba}_q^{n+1},~\bU_q\hat{\ba}_q^{n+1})\\
&\leq\delta t\eta\Big( \bU \bU^T \cL_h[\bU\hat{\ba}^{n+\frac{1}{2}}],~\cN_h(\bU\overline{\ba}^{n+\frac{1}{2}}) \bU \bU^T\cL_h[\bU\hat{\ba}^{n+\frac{1}{2}}]\Big),
\end{aligned}
\eeq
with an artificial parameter $\eta\in[0,1]$ that can be assigned.
\item Step 3. Update $\ba^{n+1}$ as  $\ba^{n+1}:=\begin{bmatrix} \ba_\phi^{n+1}\\ \ba_q^{n+1}\end{bmatrix}$.
\end{itemize}
\end{scheme}
The proof for the energy stability is similarly with the one for Scheme \ref{sch:Relaxed-POD-ROM-CN}.

\subsection{Discrete empirical interpolation method for nonlinear terms}
For the nonlinear term, we can utilize the discrete empirical interpolation method (DEIM) \cite{DEIM} to reduce the computational costs. Mainly, as we can tell from \eqref{eq:POD-ROM-II} and \eqref{eq:nonlinear-terms}, the only nonlinearity comes from the discrete mobility operator $\cN_h(\bU \ba(t))$, which is the term $g(\bU_\phi\ba_\phi(t))$. When a general nonlinearity is present, the cost to evaluate the nonlinear terms still depends on the dimension of the original system since $\bU_\phi \ba_\phi(t)\in\R^n$. The DEIM approach proposed in \cite{DEIM} developed an interpolation-based projection to approximate the nonlinear term where the interpolation indices are selected to limit the growth of an error bound. With the data \eqref{eq:sampling-phi}, we can have
\beq
\mathbf{N} = \begin{bmatrix}
g(\phi_1) & g(\phi_2) & \cdots & g(\phi_{m-1}) & g(\phi_m)
\end{bmatrix}.
\eeq 
The approximation from projecting $g(\bU_\phi\ba_\phi(t))$, denoted as $g(t)$, onto the subspace obtained from the POD approach is of the form
\beq
g(t) \approx \bW \bc(t),
\eeq
where $\bW= \begin{bmatrix} \bw_1&\cdots&\bw_k \end{bmatrix}$
$\in \hR^{n,k}$ with $k\ll n$ consists of the first $k$ columns of the left singular matrix from SVD, and a time-dependent coefficient vector $\bc(t) \in \hR^k$. To determine $\bc(t)$ which is a highly overdetermined system, we construct a basis dependent interpolation matrix, $P=\begin{bmatrix} \be_{\gamma_1}&\cdots&\be_{\gamma_k}\end{bmatrix} \in \hR^{n,k}$ in an algorithmic way given in \cite{DEIM} where $\be_{\gamma_i}\in\hR^n$ is the $\gamma_i$th column of the identity matrix $\bI_n\in\hR^{n,n}$ for $i=1,\cdots,n$. Suppose $P^T \bW \in \hR^{k,k}$ is nonsingular, then the coefficient vector $\bc(t)$ is determined uniquely from
\beq
P^T g(t) = P^T \bW \bc(t).
\eeq 
Finally, $g(t)$ can be approximated by
\beq
g(t) \approx \bW\bc(t)=\bW (P^T \bW)^{-1} P^T g(t).
\eeq 
If $g$ is a component-wise function, we can further have
\beq
g(t) \approx \bW (P^T \bW)^{-1}  g(P^T \bU_\phi \ba_\phi(t)).
\eeq 
Notice the fact $ \bW (P^T \bW)^{-1} \in \hR^{n,k}$ which is precomputed and $P^T \bU_\phi \in \hR^{k,r}$, which are independent of $n$ of the full-order system.  For simplicity, the DEIM is not utilized in the later numerical examples. But we emphasize that the application of DEIM doesn't affect our theoretical results in the previous sections since the nonlinearity in our POD-ROM is dealt explicitly as shown in Scheme \ref{sch:Relaxed-POD-ROM-CN} and Scheme \ref{sch:Relaxed-POD-ROM-BDF2}.

\section{Numerical Examples}
So far, we have provided a unified platform to develop structure-preserving ROMs for thermodynamically consistent PDE models and their structure-preserving numerical approximations. The leading idea is to transform the thermodynamically consistent PDE models into an equivalent form using the energy quadratization approach. The free energy of the transformed system is in the quadratic form of the state variables, which could be further exploited to develop ROMs.

To illustrate the effectiveness of the numerical platform, we present several examples in this section. Specifically, we apply the general framework to several thermodynamically consistent phase field models. Phase field models have numerous applications in various science and engineering fields, along with interdisciplinary applications. The principle of phase field modeling is the concept of using a smooth function to represent the state of the system by introducing an artificial thickness as a transition layer, i.e., this function varies continuously across the interface, allowing for the transition between different states or phases to be represented over a finite width rather than as a sharp interface. Here, we investigate several popular thermodynamically consistent phase field models.

\subsection{Allen-Cahn equation}
 We start with the Allen-Cahn (AC) equation
\begin{subequations}
\begin{align}
& \partial_t \phi = -M( -\varepsilon^2 \Delta  \phi - \phi^3 + \phi), \quad (\bx, t) \in \Omega \times (0, T],\\
& \phi(\bx, 0) = \phi_0(\bx), \quad \bx \in \Omega,
\end{align}
\end{subequations}
with periodic boundary conditions. Here $\Omega$ is a smooth domain, $M >0$ is the mobility constant, and $\varepsilon$ is a parameter to control the interfacial thickness. The Onsager triplet for the AC equation is
$$
(\phi,\quad \cG, \quad \cE) := \Big( \phi, \quad  M, \quad \int_\Omega \Big[ \frac{\varepsilon^2}{2}|\nabla \phi|^2 + \frac{1}{4}(\phi^2 - 1)^2 \Big] d\bx \Big).
$$

If we introduce the auxiliary variables
$$
q := \frac{\sqrt{2}}{2}(\phi^2-1 -\gamma_0), \quad g(\phi):=\frac{\partial q}{\partial \phi} = \sqrt{2}\phi,
$$
we can rewrite the equation as
\begin{subequations}
\begin{align}
& \partial_t \phi = -M \Big(-\varepsilon^2 \Delta \phi + \gamma_0 \phi  + q g(\phi) \Big), \\
& \partial_t q = g(\phi) \partial_t \phi.
\end{align}
\end{subequations}
Denote $\Psi = \begin{bmatrix}
\phi \\ q
\end{bmatrix}$, $\cG_s = M$, $\cL_0 = -\varepsilon^2 \Delta + \gamma_0$, $\cL = \begin{bmatrix}
\cL_0  &  0  \\
0 & 1
\end{bmatrix}$ and $\cN_0 = \begin{bmatrix}
\bI  & g(\phi)
\end{bmatrix}$, such that the equation is written as
\beq
\partial_t \Psi = -\cN(\Psi) \cL  \Psi, \quad \mbox{ where } \quad \cN(\Psi) = \cN_0^T\cG_s\cN_0 .
\eeq 
Hence, the structure-preserving POD-ROMs can be derived from the techniques introduced in previous sections. Now, we consider a specific numerical example.

\textbf{Example 1.}
 We choose the model parameters $M=1$ and  $\varepsilon=0.02$. The domain $\Omega$ is set as $[0, 1]^2$. The initial condition is
 \beq \label{eq:AC-initial-condition}
 \phi_0(x,y) = 2 \Big[ \sum_{i=1}^7 \frac{1}{2} (1 - \tanh \frac{\sqrt{ (x-X(i))^2+(y-Y(i))^2} - R(i)}{\varepsilon}) \Big] - 1,
 \eeq 
 where the parameters are
\begin{subequations}
\begin{align}
&X=\begin{bmatrix} 1/4 & 1/8 & 1/4 & 1/2 & 3/4 & 1/2 & 3/4 \end{bmatrix},  \\
&Y = \begin{bmatrix}  1/4 & 3/8&  5/8&  1/8&  1/8&  1/2& 3/4 \end{bmatrix},  \\
&R = \begin{bmatrix} 1/20& 1/16 &1/12 &1/12& 1/10& 1/8& 1/8  \end{bmatrix}.
\end{align}
\end{subequations}
This represents seven disks of various sizes in different domain locations. Consider $T=15$, we first generate the data using an accurate numerical solver and sample the following data
$
\begin{bmatrix}
\Phi_1 & \Phi_2 & \cdots & \Phi_{m}
\end{bmatrix}
$,
where $\Phi_k \in \bR^{n}$ with $n=N_xN_y$ are the numerical solution at $t=0.1k$ in a vector form. We use $N_x=N_y=128$ and $m=150$ in this example. Then, we follow the POD-ROM numerical framework proposed in previous sections. The numerical parameters are $\delta t = 10^{-3}$ and $\gamma_0=1$. The distribution of the singular values is summarized in Figure \ref{fig:AC-SVD}.

\begin{figure}[H]
\centering
\includegraphics[width=0.85\textwidth]{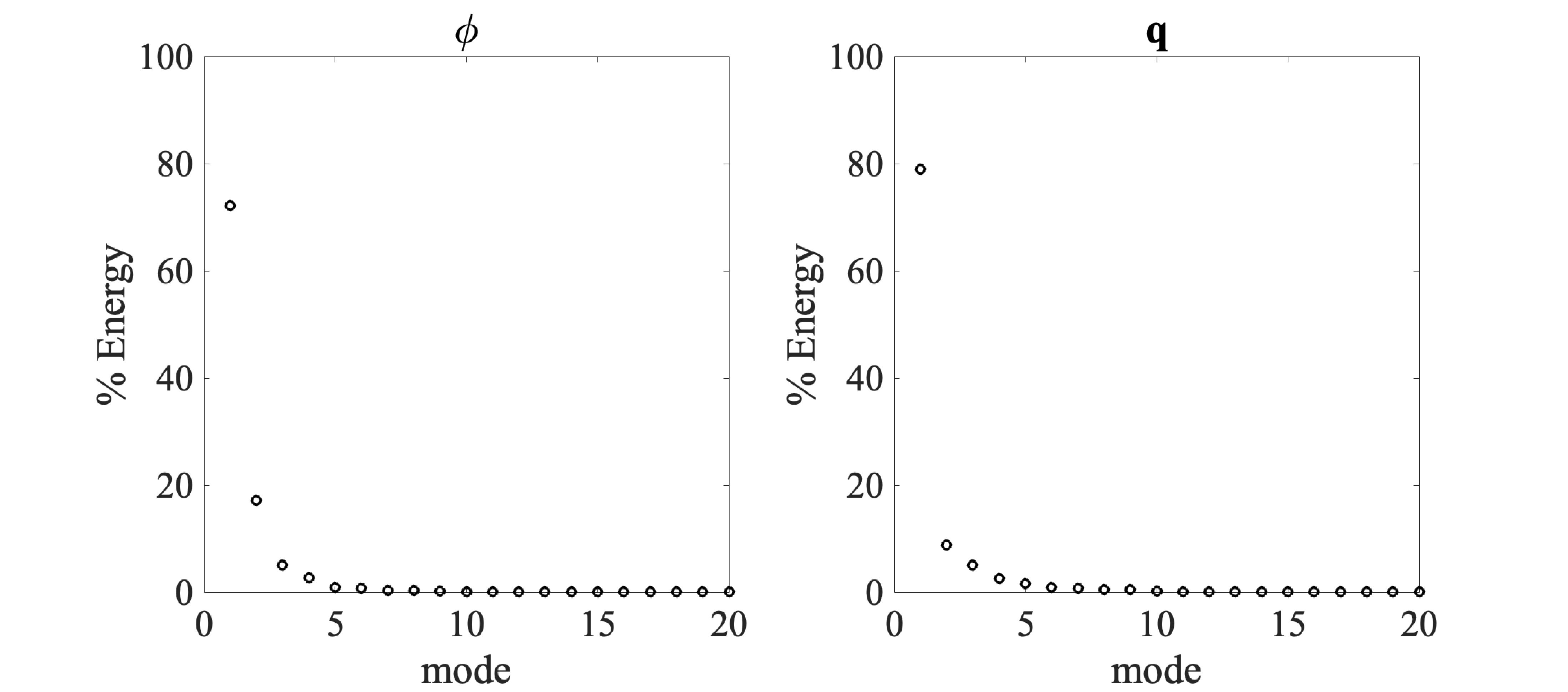}
\caption{Singular value distributions for the data collected from the Allen-Cahn equation.}
\label{fig:AC-SVD}
\end{figure}

The numerical results using the ROM and proposed schemes are summarized in Figure \ref{fig:AC-Example1-Compare}. 
The figure shows that the numerical approximation of the reduced order model, even with just r=4 modes, provides a reasonably good approximation compared to results from the full order model. Moreover, the reduced order model with r=10 modes offers an even more accurate approximation.

\begin{figure}[H]
\centering

\subfigure[Numerical solution from POD-ROM-II with r=1 at $t=1,5,10,14$]{
\includegraphics[width=0.2\textwidth]{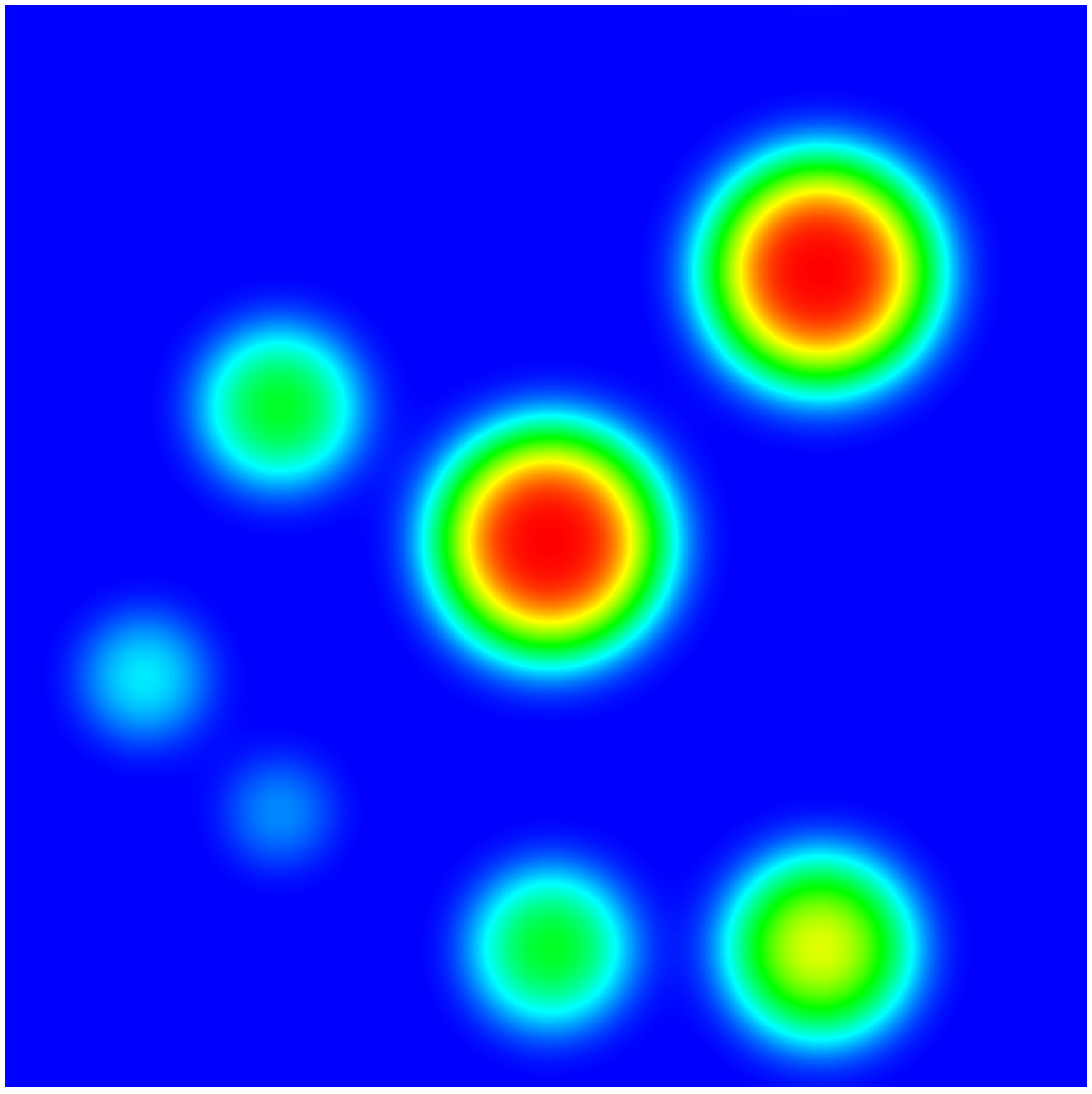}
\includegraphics[width=0.2\textwidth]{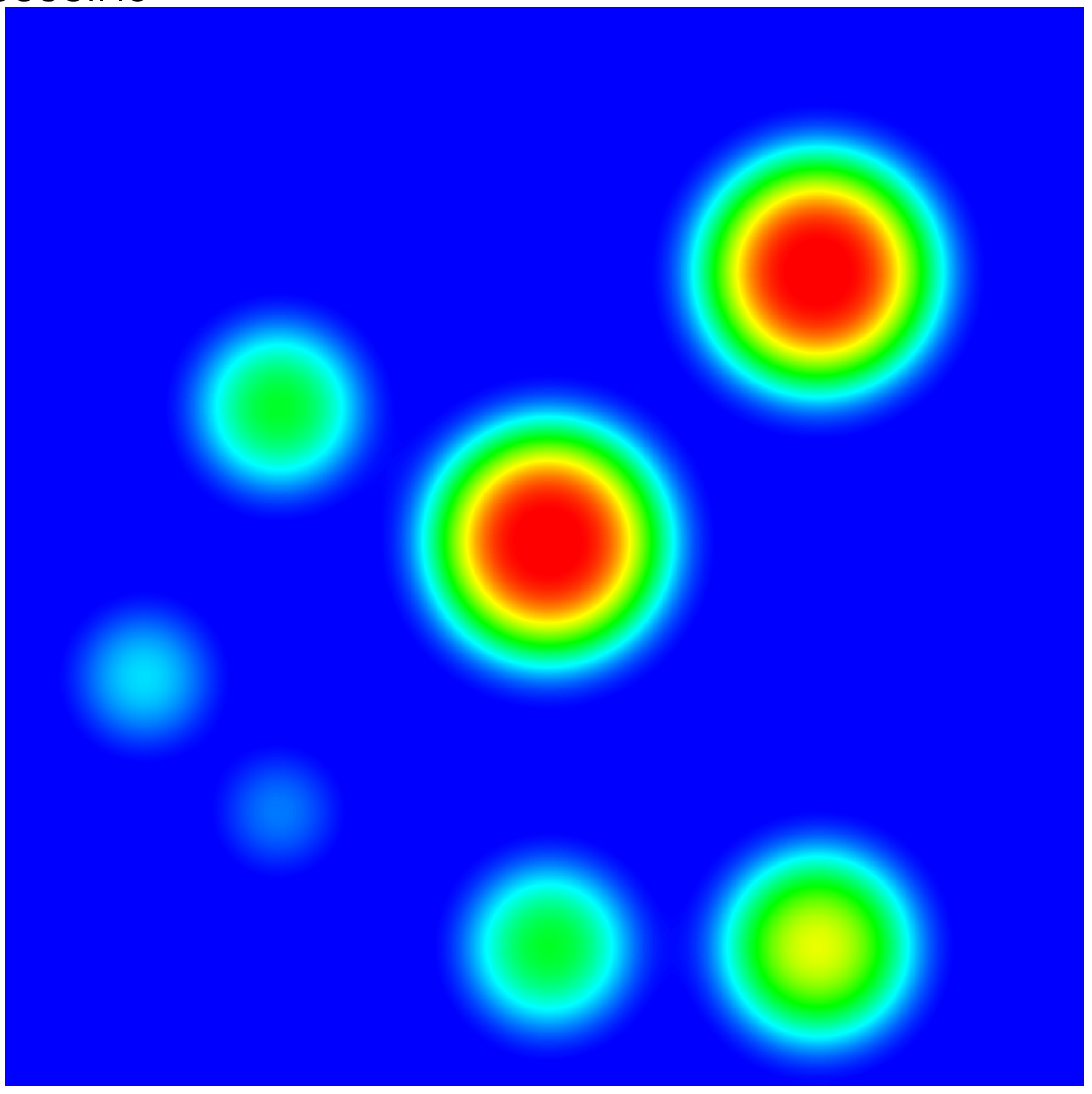}
\includegraphics[width=0.2\textwidth]{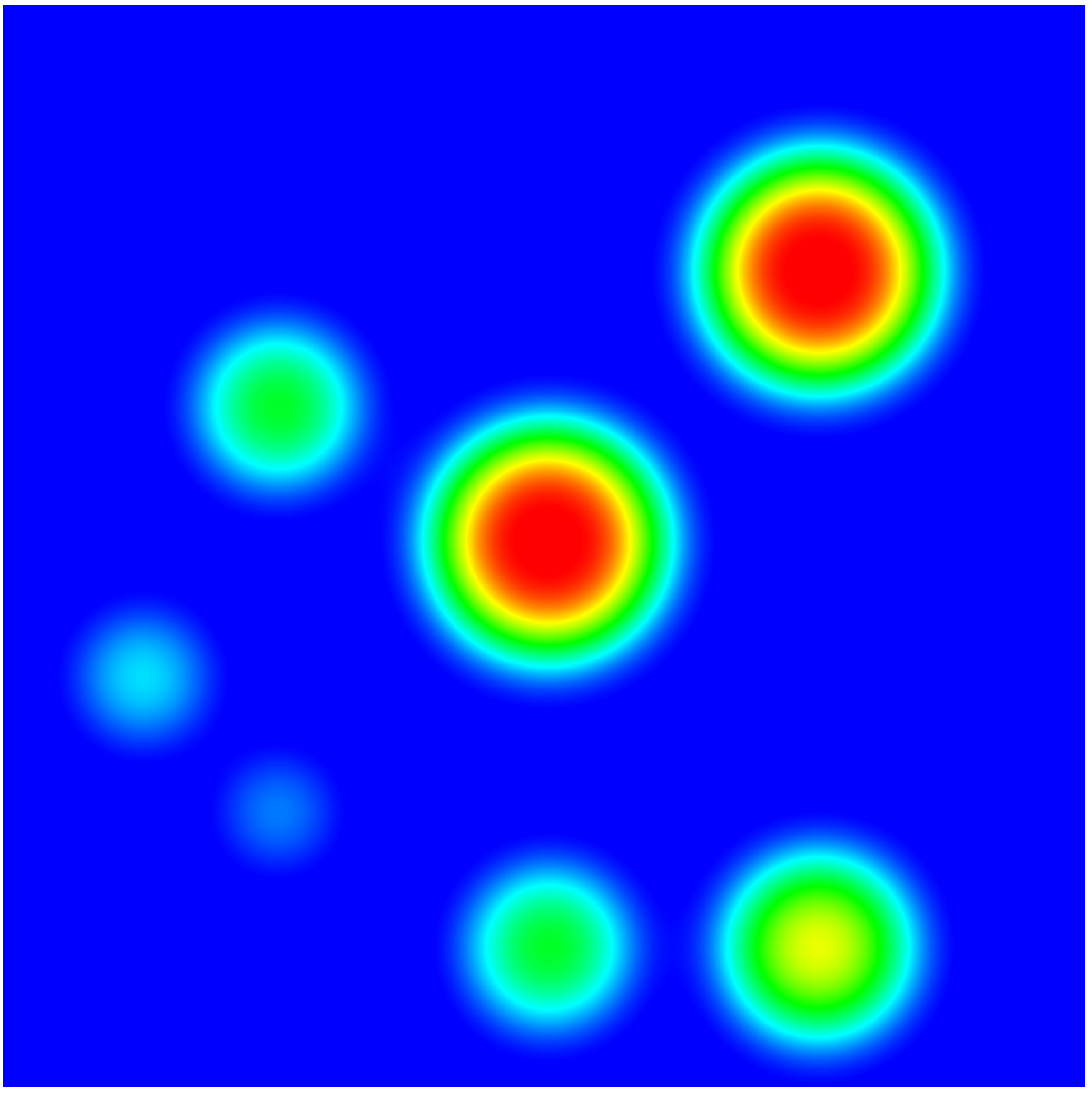}
\includegraphics[width=0.2\textwidth]{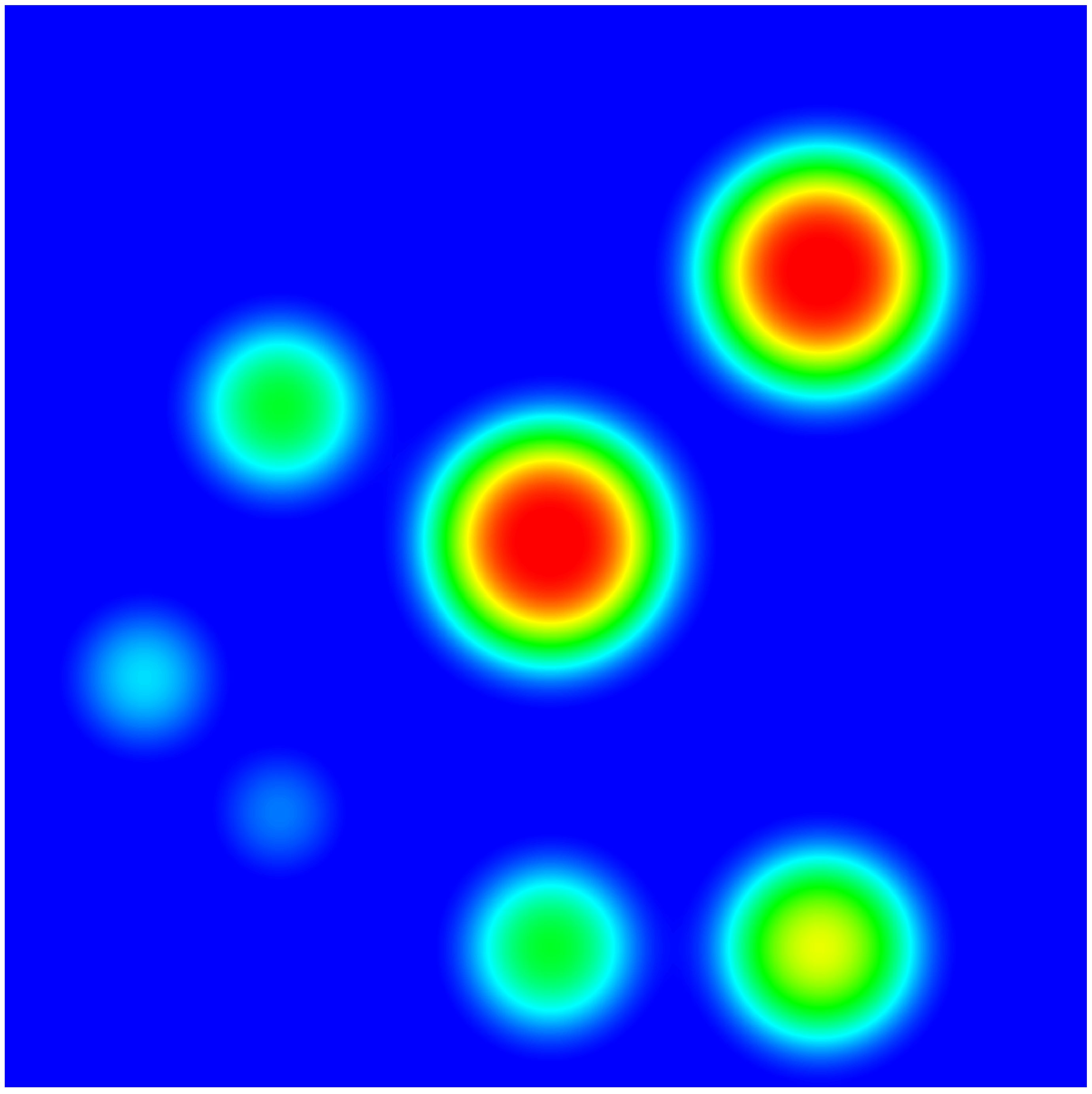}
}

\subfigure[Numerical solution from POD-ROM-II with r=4 at $t=1,5,10,14$]{
\includegraphics[width=0.2\textwidth]{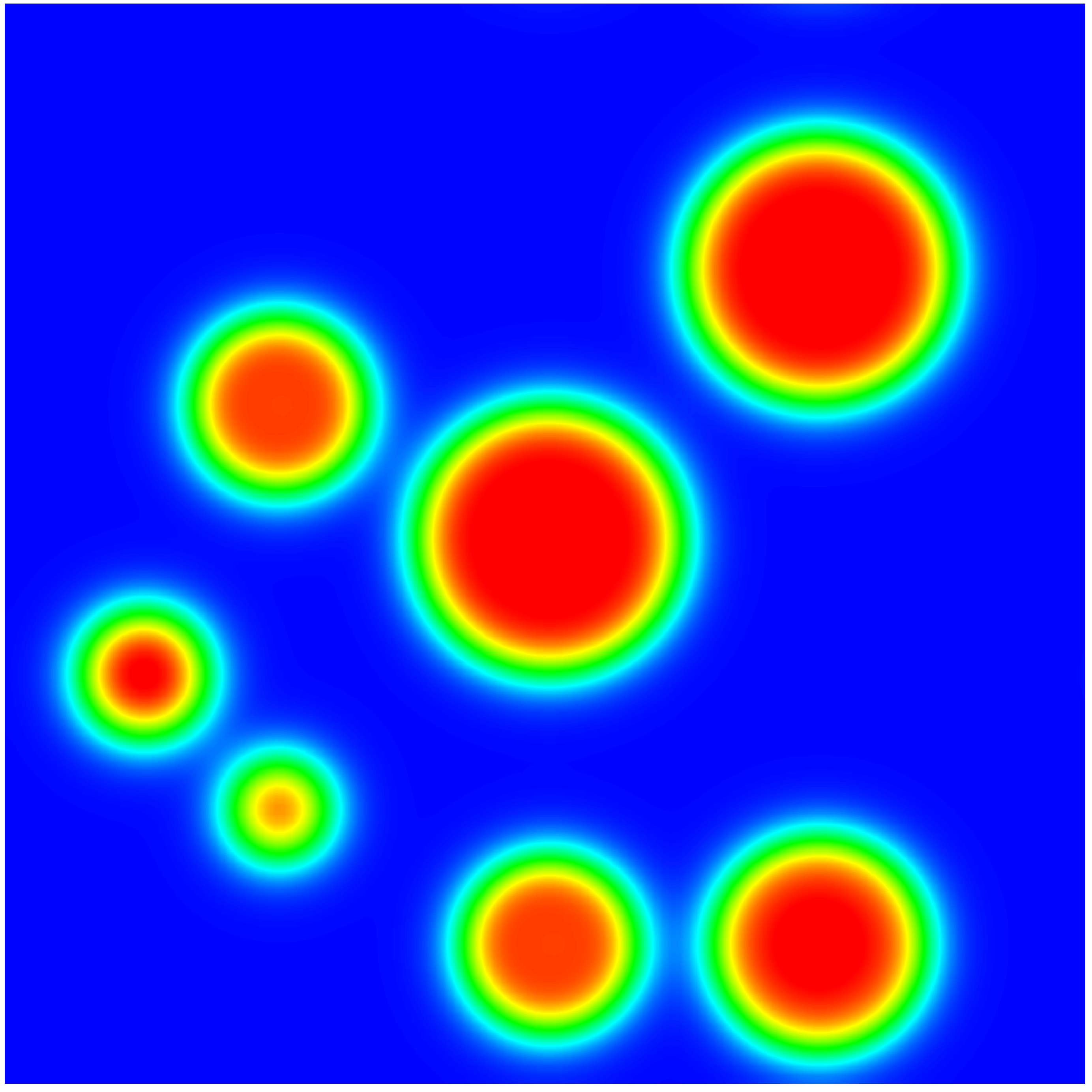}
\includegraphics[width=0.2\textwidth]{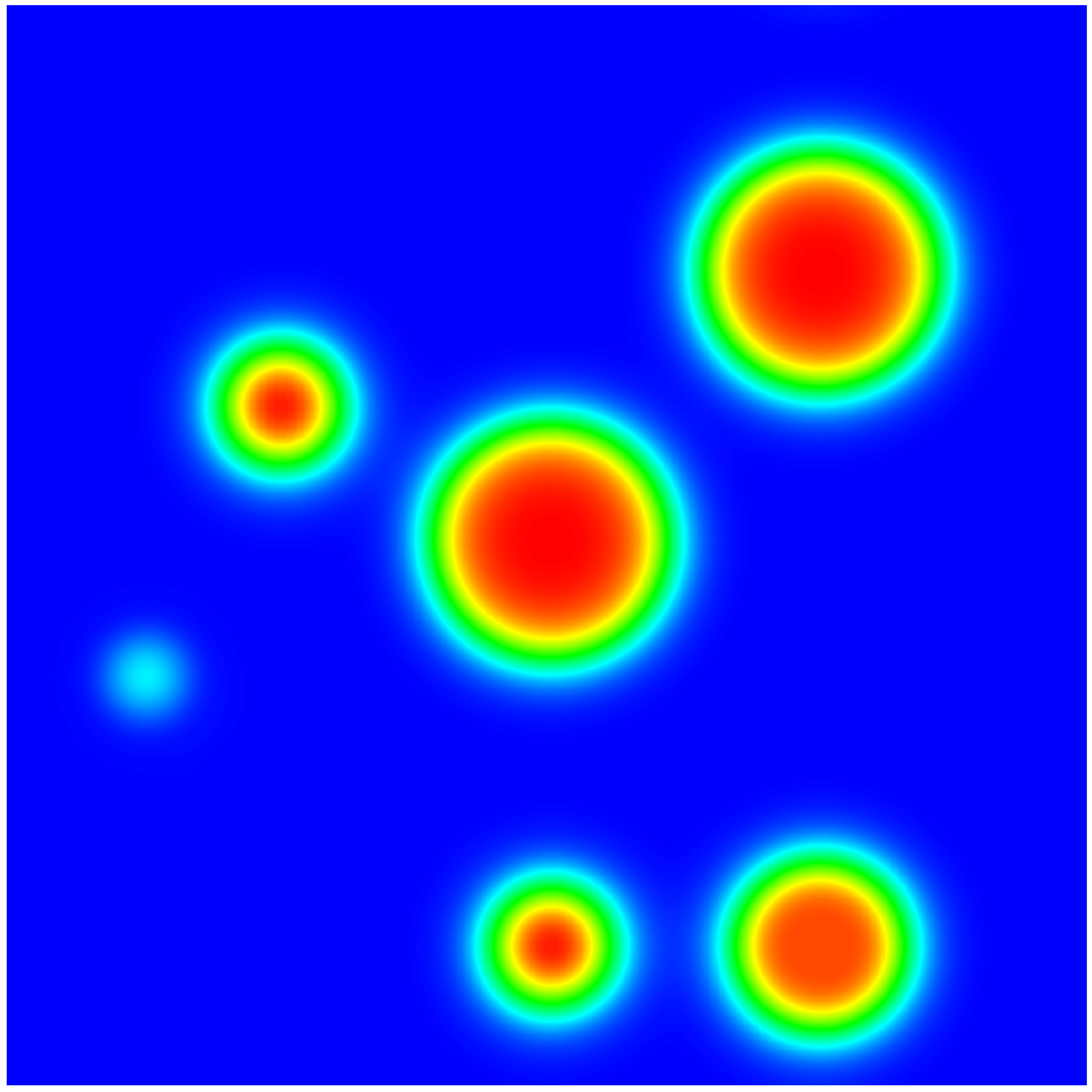}
\includegraphics[width=0.2\textwidth]{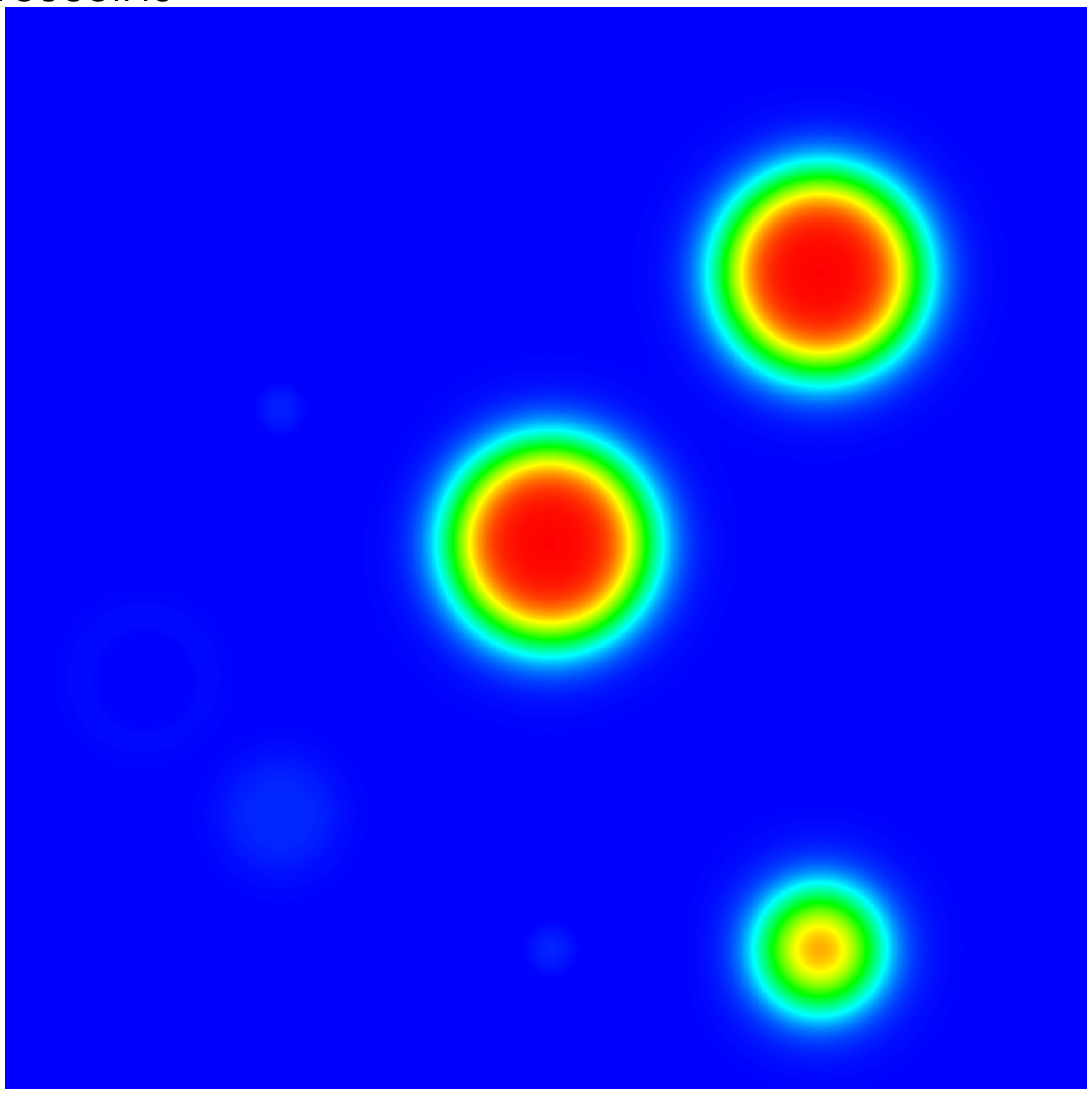}
\includegraphics[width=0.2\textwidth]{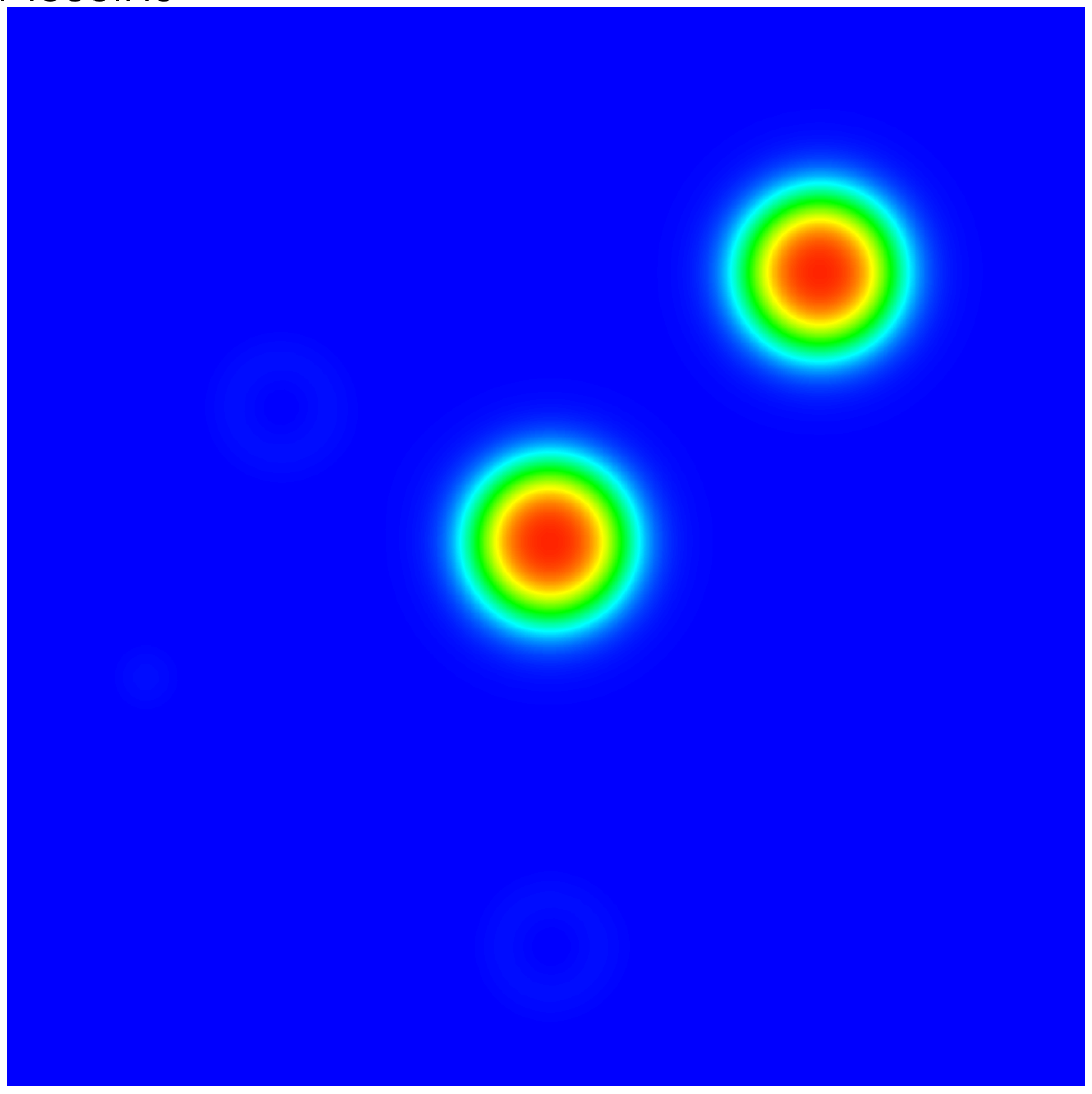}
}

\subfigure[Numerical solution from POD-ROM-II with r=10 at $t=1,5,10,14$]{
\includegraphics[width=0.2\textwidth]{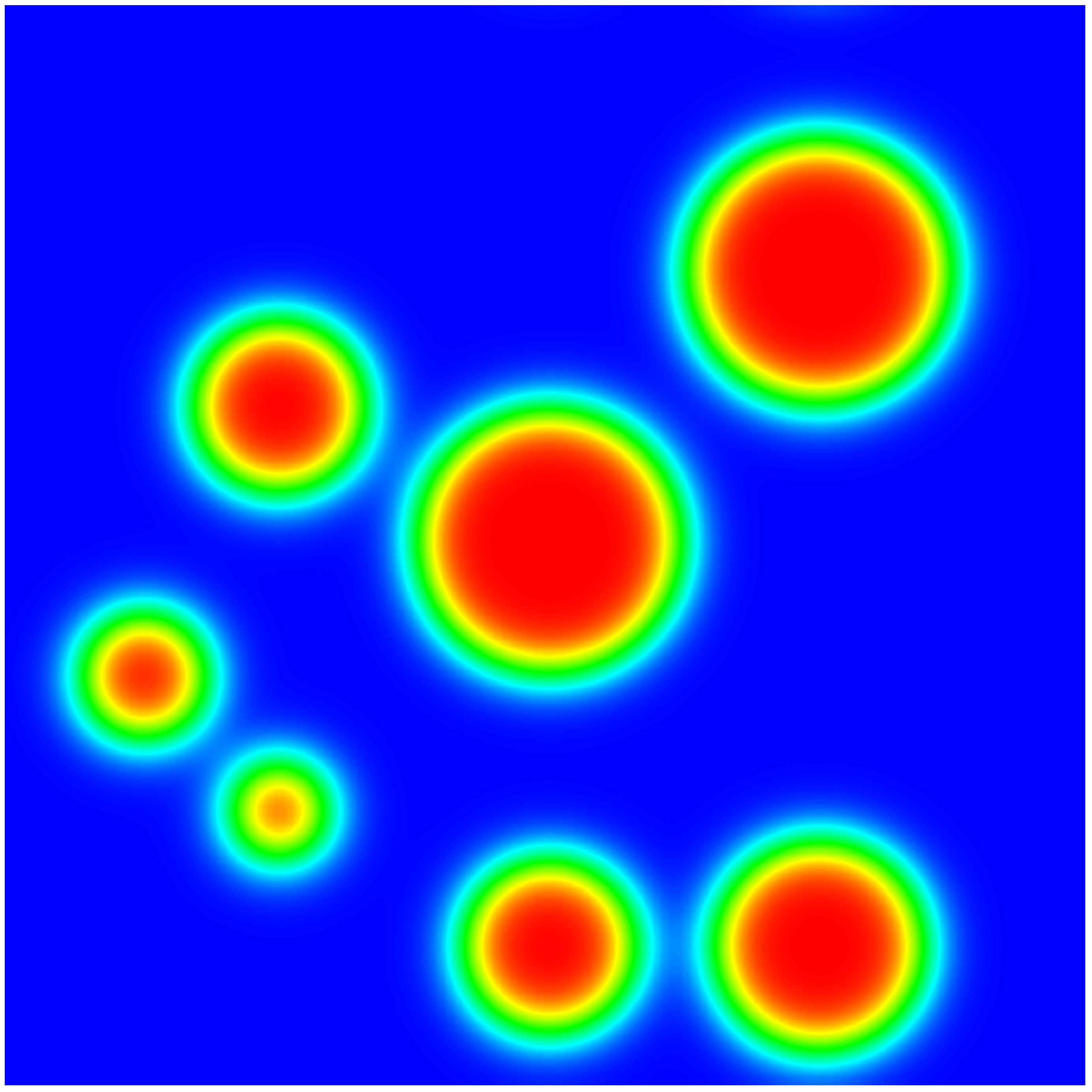}
\includegraphics[width=0.2\textwidth]{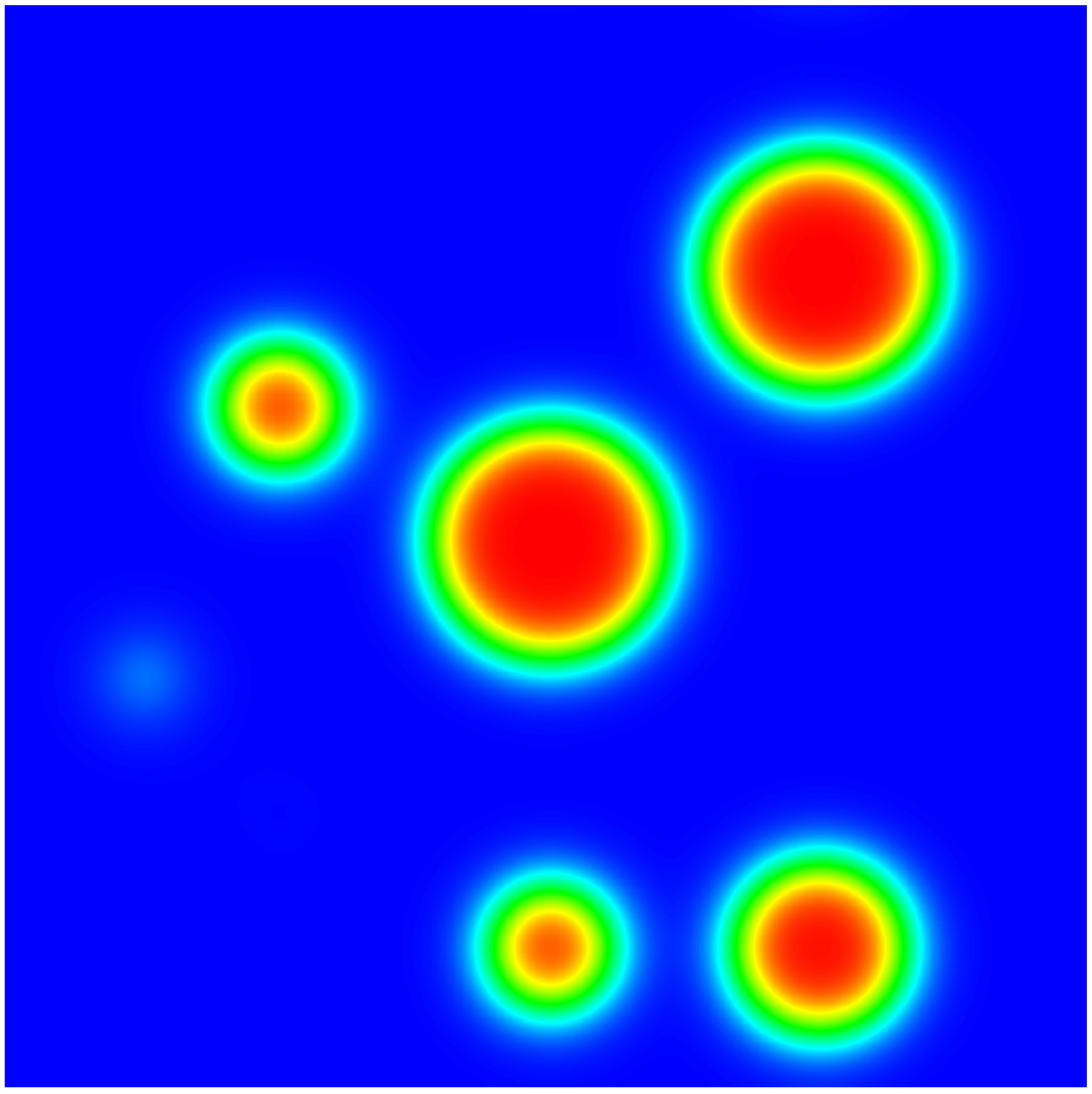}
\includegraphics[width=0.2\textwidth]{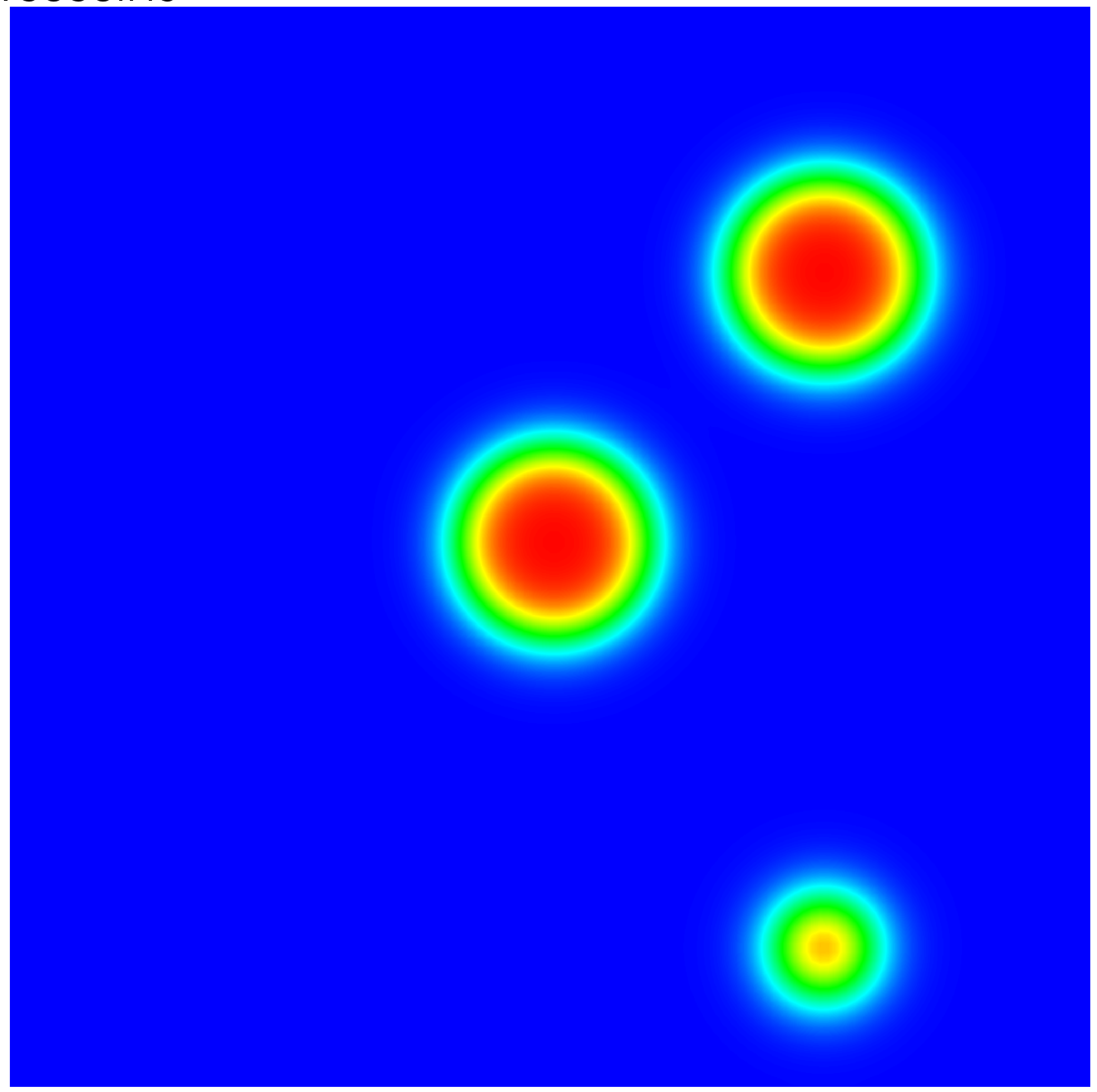}
\includegraphics[width=0.2\textwidth]{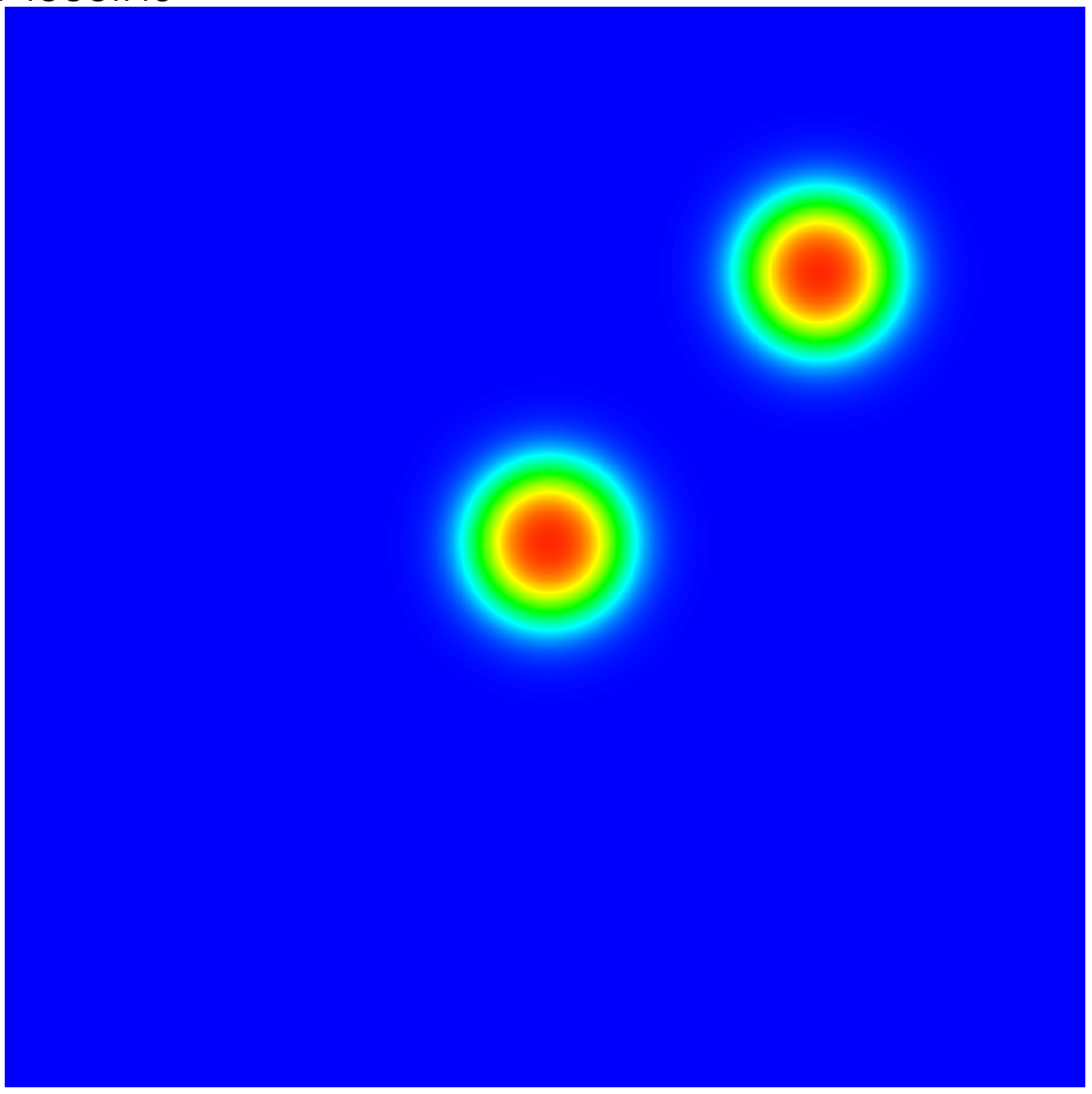}
}

\subfigure[Numerical solution from the full order model at $t=1,5,10,14$]{
\includegraphics[width=0.2\textwidth]{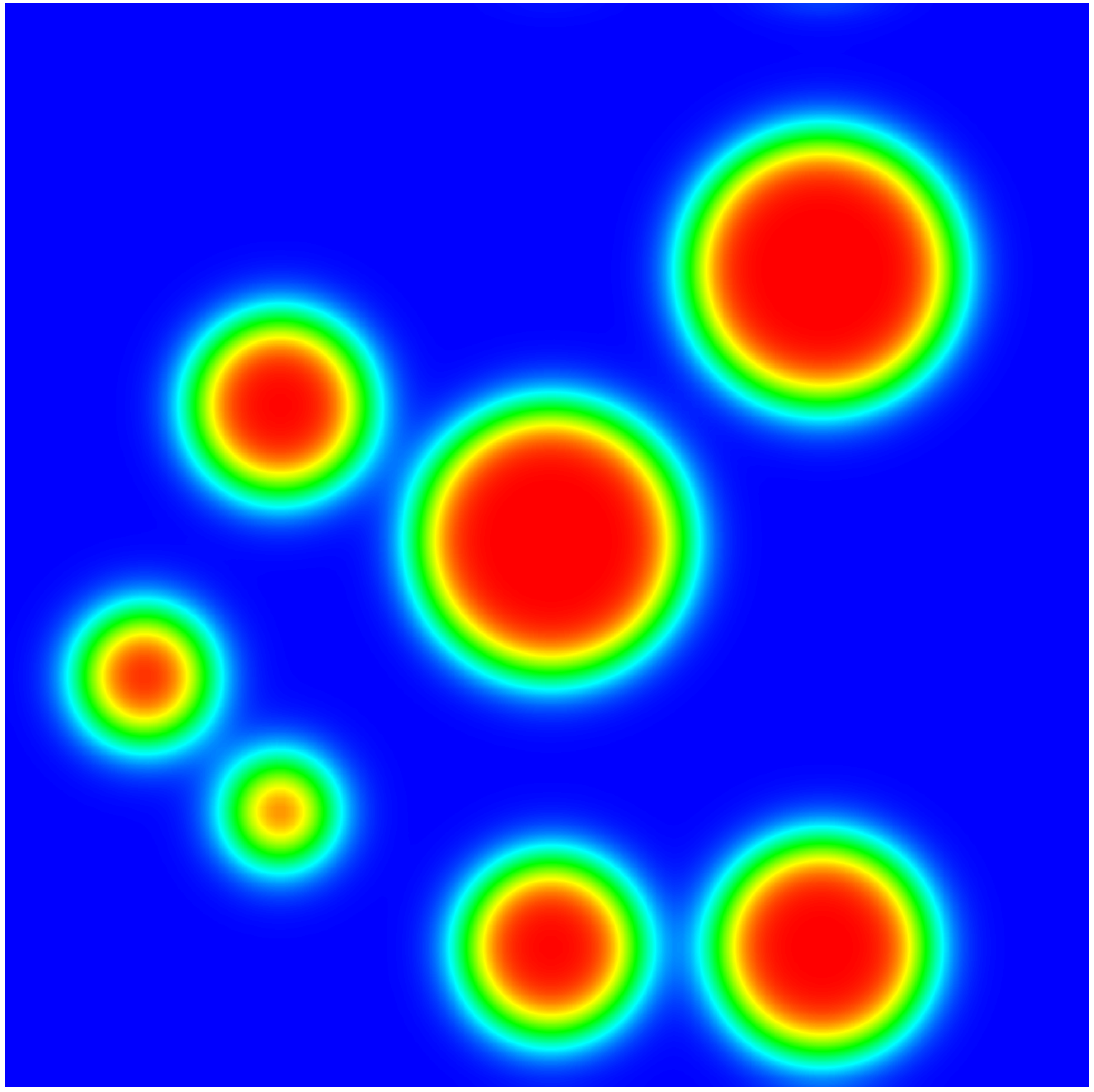}
\includegraphics[width=0.2\textwidth]{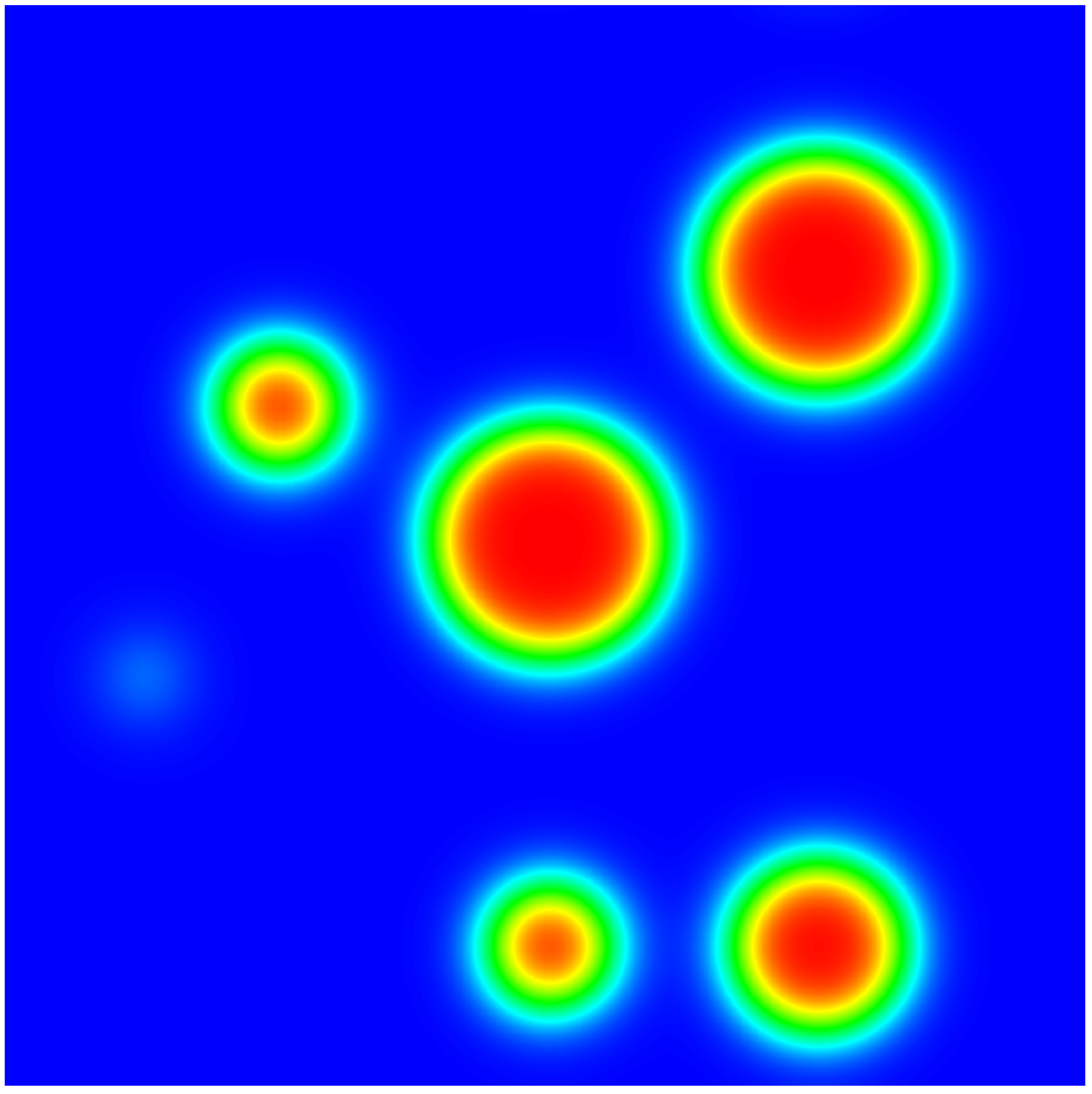}
\includegraphics[width=0.2\textwidth]{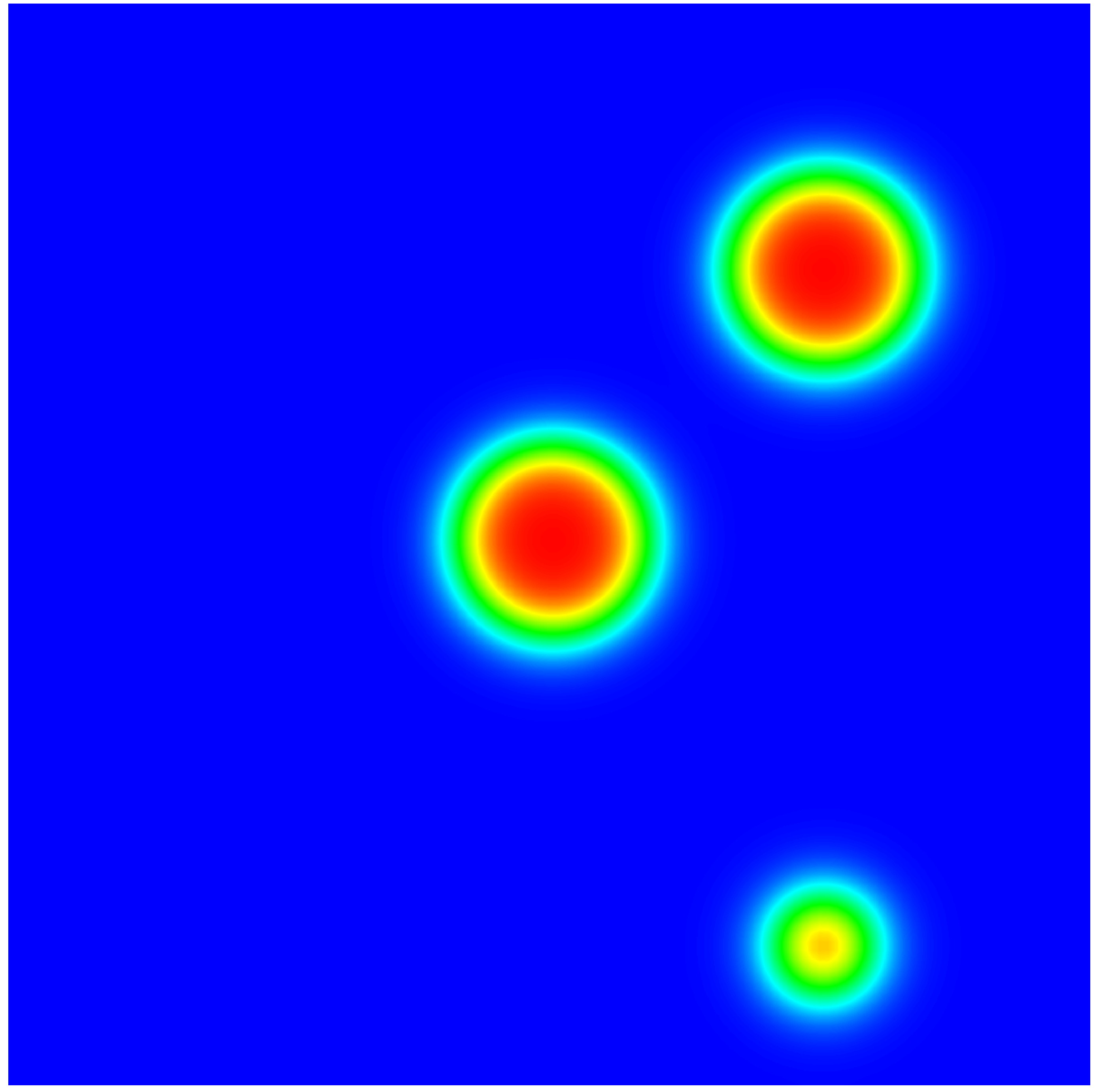}
\includegraphics[width=0.2\textwidth]{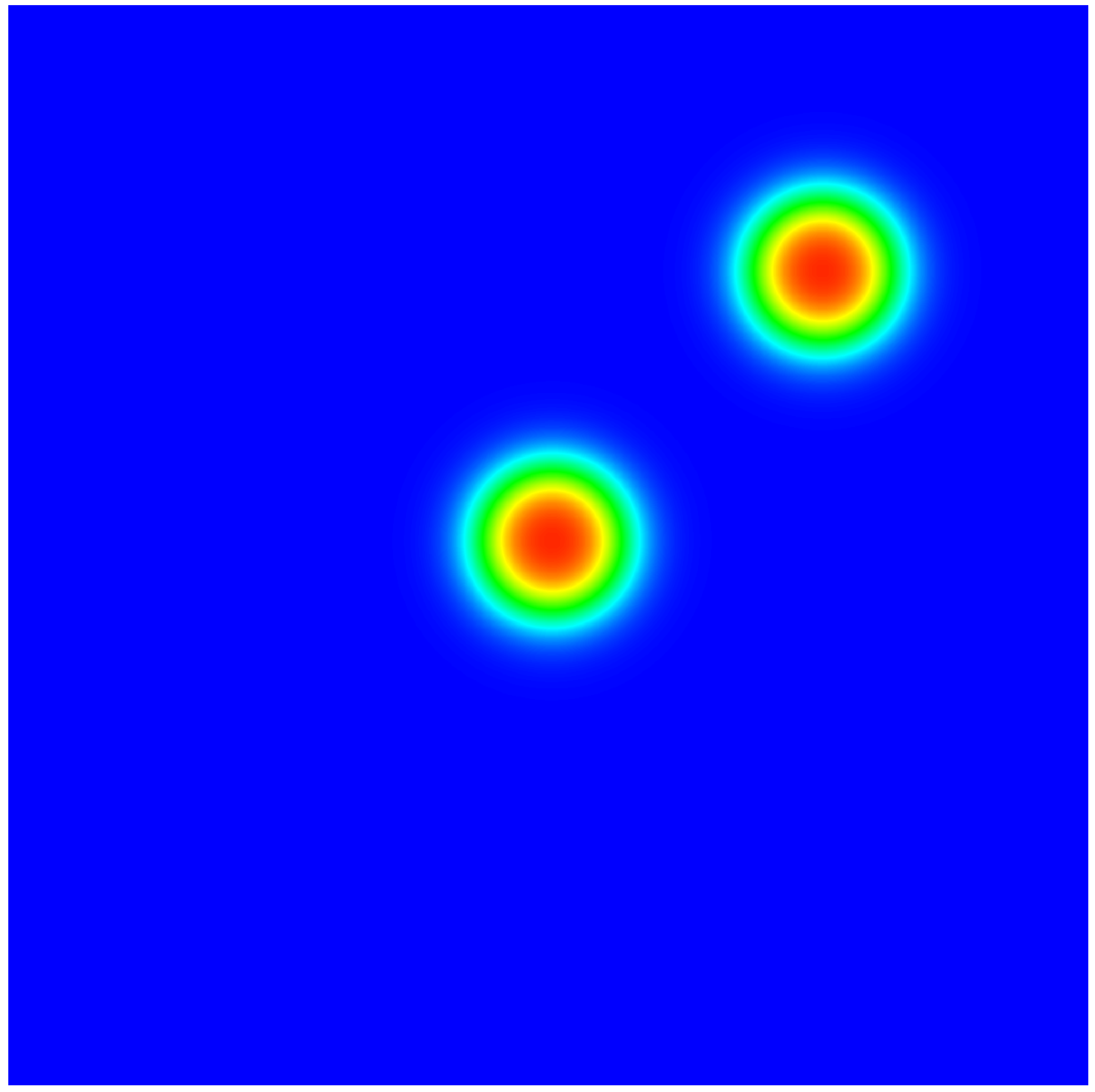}
}

\caption{A comparison of the numerical solutions between the full order model and the POD-ROM-II with various numbers of modes using Scheme \ref{sch:Relaxed-POD-ROM-CN}.}
\label{fig:AC-Example1-Compare}
\end{figure}

The energy dissipation across different modes is illustrated in Figure \ref{fig:AC-Energy-Compare}(a), which highlights that the reduced order model with $r=4$ modes effectively traces the energy dissipation trends, while the one with $r=10$ modes precisely replicates the energy dissipation. Additionally, we present the energy evolution using both approach I and approach II with $r=10$ modes. It appears that approach II provides more accurate results. As we discussed, the reason why the approach I is not as accurate is that it preserves an energy dissipation law with a modified energy dissipation rate.

\begin{figure}[H]
\center
\subfigure[]{\includegraphics[width=0.45\textwidth]{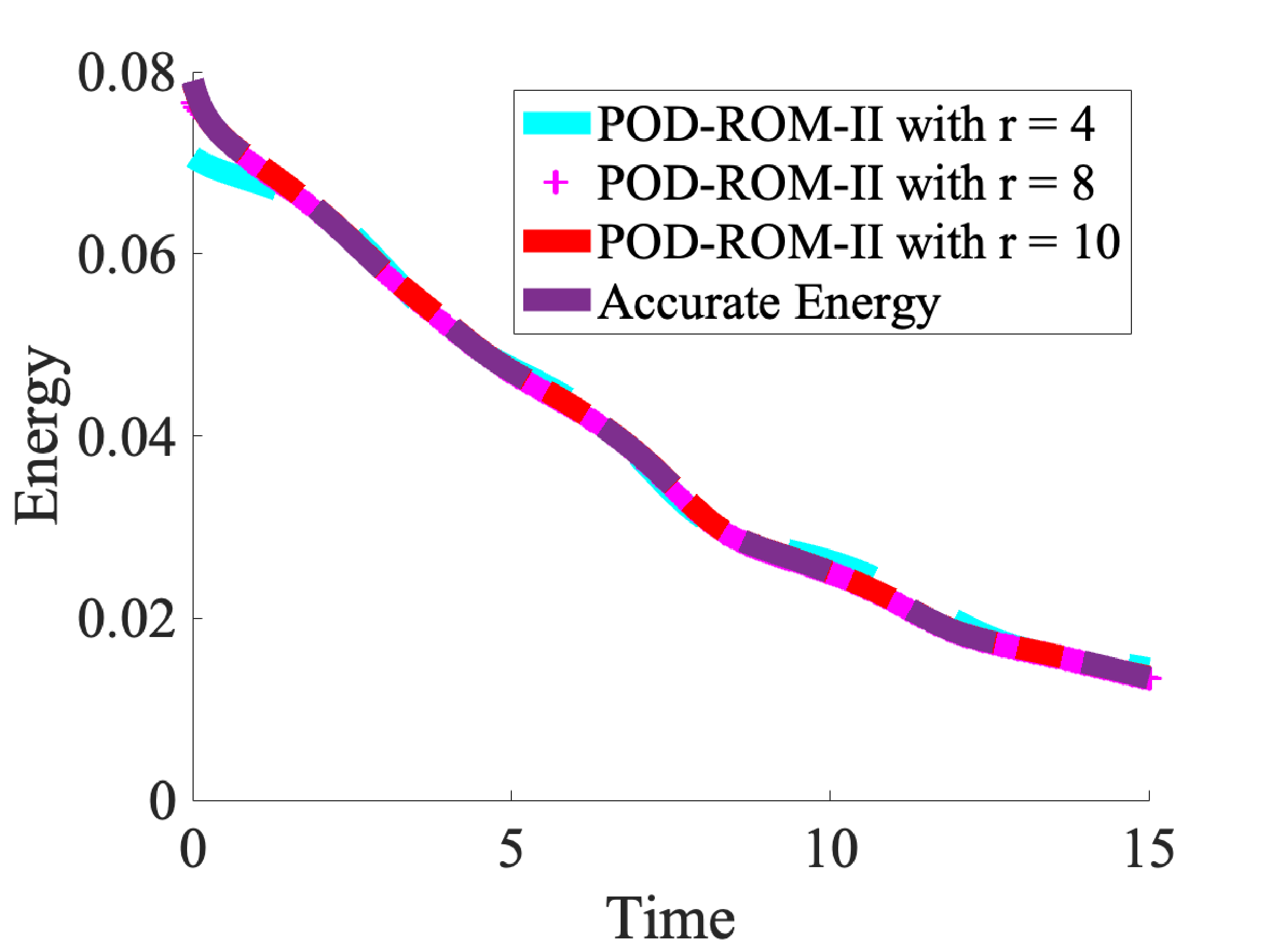}}
\subfigure[]{\includegraphics[width=0.45\textwidth]{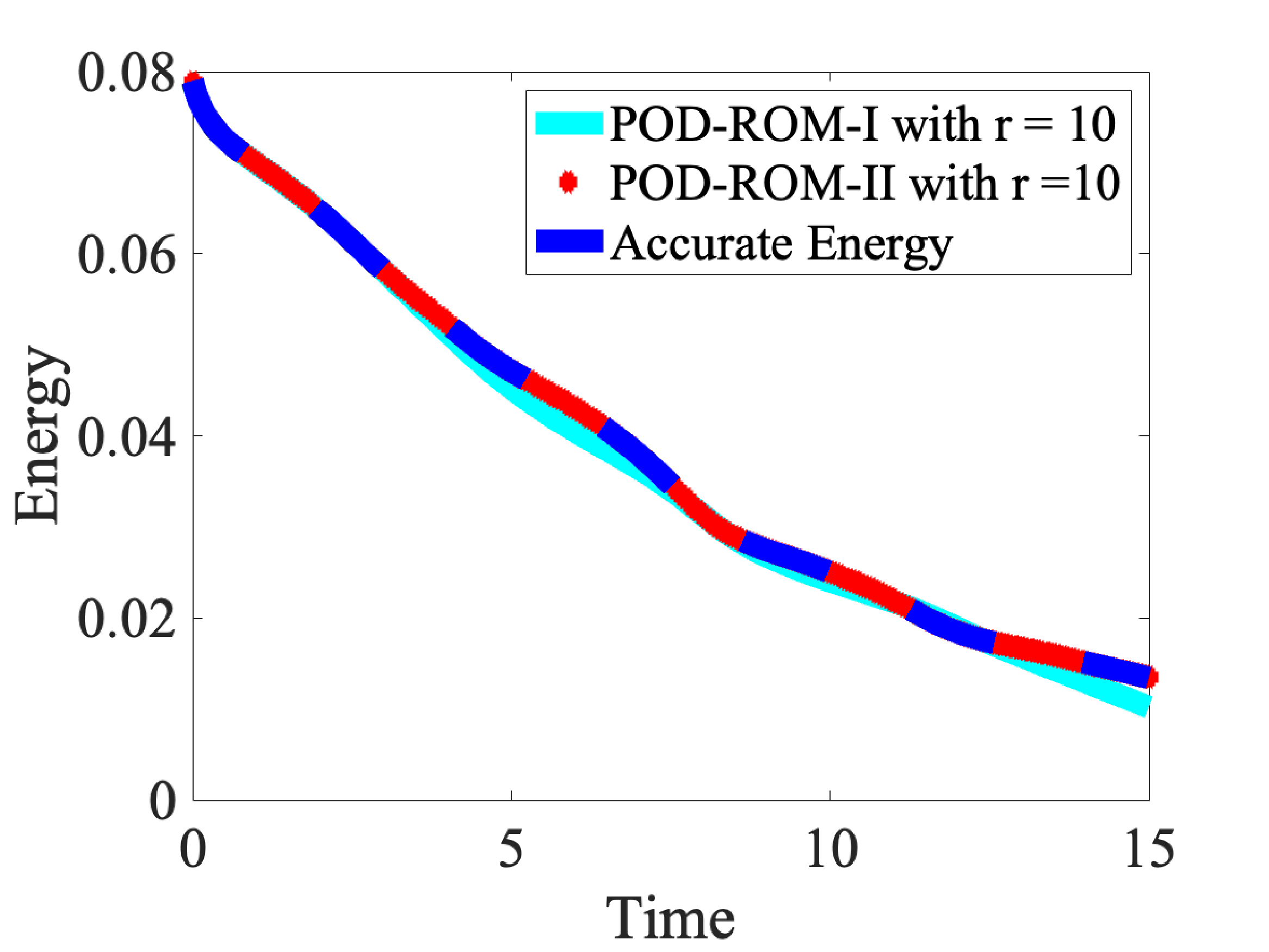}}

\label{fig:AC-Energy-Compare}
\caption{A comparison of the energy dissipation for the Allen-Cahn equation. (a) A comparison of the energy dissipation results between the full order model and POD-ROM-II with various modes using Scheme \ref{sch:Relaxed-POD-ROM-CN}; (b) A comparison of the energy dissipation results between POD-ROM-I using Scheme \ref{sch:Relaxed-POD-ROM-CN-I} and POD-ROM-II using Scheme \ref{sch:Relaxed-POD-ROM-CN} This figure underscores the proficiency of the proposed POD-ROM approach in capturing the thermodynamic structures of the full order model with only a few modes.}
\end{figure}

\subsection{Cahn-Hilliard equation}
In the next problem, we investigate the Cahn-Hilliard (CH) equation
\begin{subequations}
\begin{align}
& \partial_t \phi = M\Delta \mu , \quad (\bx, t) \in \Omega \times (0, T], \\
& \mu = - \varepsilon^2 \Delta  \phi + \phi^3 - \phi, \quad (\bx, t) \in \Omega \times (0, T],\\
& \phi(\bx,0) = \phi_0(\bx), \quad \bx \in \Omega,
\end{align}
\end{subequations}
with periodic boundary conditions. The Onsager triplet for the CH equation is
$$
(\phi,\quad \cG, \quad \cE) := \Big( \phi, \quad  -M\Delta, \quad \int_\Omega \Big[ \frac{\varepsilon^2}{2}|\nabla \phi|^2 + \frac{1}{4}(\phi^2 - 1)^2 \Big] d\bx \Big).
$$
Similarly, we can introduce 
$$
q = \frac{\sqrt{2}}{2}(\phi^2-1 -\gamma_0), \quad g(\phi):=\frac{\partial q}{\partial \phi} = \sqrt{2}\phi.
$$
And we will have the equation rewritten as
\begin{subequations}
\begin{align}
& \partial_t \phi = M \Delta \Big(-\varepsilon^2 \Delta \phi + q g(\phi) \Big), \\
& \partial_t q = g(\phi) \partial_t \phi.
\end{align}
\end{subequations}
Denote $\Psi = \begin{bmatrix}
\phi \\ q
\end{bmatrix}$, $\cG_s = -M\Delta $, $\cL_0 = -\varepsilon^2 \Delta + \gamma_0$, $\cL = \begin{bmatrix}
\cL_0  &  0  \\
0 & 1
\end{bmatrix}$ and $\cN_0 = \begin{bmatrix}
\bI  & g(\phi)
\end{bmatrix}$, such that the equation is written as
\beq
\partial_t \Psi = -\cN(\Psi) \cL  \Psi, \quad \mbox{ where } \quad \cN(\Psi) = \cN_0^T\cG_s\cN_0 .
\eeq 
Hence, the POD-ROM method introduced in previous sections can be directly applied.


\textbf{Example 2.}
In this example, we choose the model parameters $M=0.01$ and  $\varepsilon=0.02$. The domain $[0, 1]^2$ is considered. The numerical parameters used are $\gamma_0 = 2$ and $N_x=N_y=128$. The initial condition is chosen the same as \eqref{eq:AC-initial-condition}. This represents seven disks of various sizes in different locations of the domain. Consider $T=15$, and $\delta t = 10^{-3}$. We first generate the data using an accurate numerical solver. And collect the data
$
\begin{bmatrix}
\Phi_1 & \Phi_2 & \cdots & \Phi_m
\end{bmatrix},
$
where $\Phi_k \in \mathbb{R}^n$ with $n=N_xN_y$ are the numerical solution at $t=0.1k$ in a vector form. In this example, we choose $N_x=N_y=128$ and $m=150$. The singular value distribution is summarized in Figure \ref{fig:CH-SVD}.

\begin{figure}[H]
\centering
\includegraphics[width=0.85\textwidth]{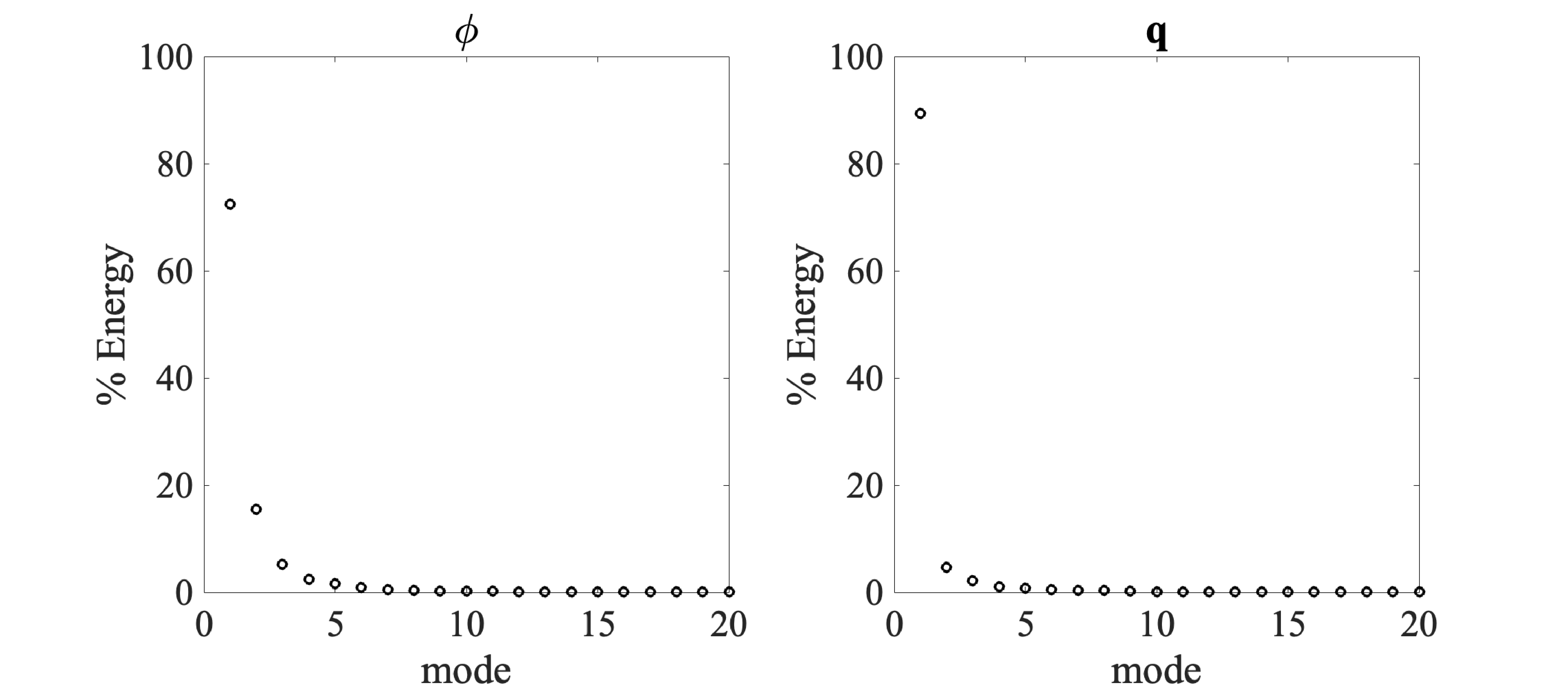}
\caption{Singular value distributions for the data collected from the Cahn-Hilliard equation.}
\label{fig:CH-SVD}
\end{figure}

Then, we follow the POD-ROM numerical framework proposed in previous sections. 
The numerical results are summarized in Figure \ref{fig:CH-Example1-Compare}. It illustrates that the reduced order model with $r=10$ modes can have an accurate approximation of the coarsening dynamics. The numerical results with $r=15$ modes show similar dynamics to the results from the full order model.

\begin{figure}[H]
\centering

\subfigure[Numerical solution from POD-ROM-II with r=10 at $t=10,30,50,90$]{
\includegraphics[width=0.2\textwidth]{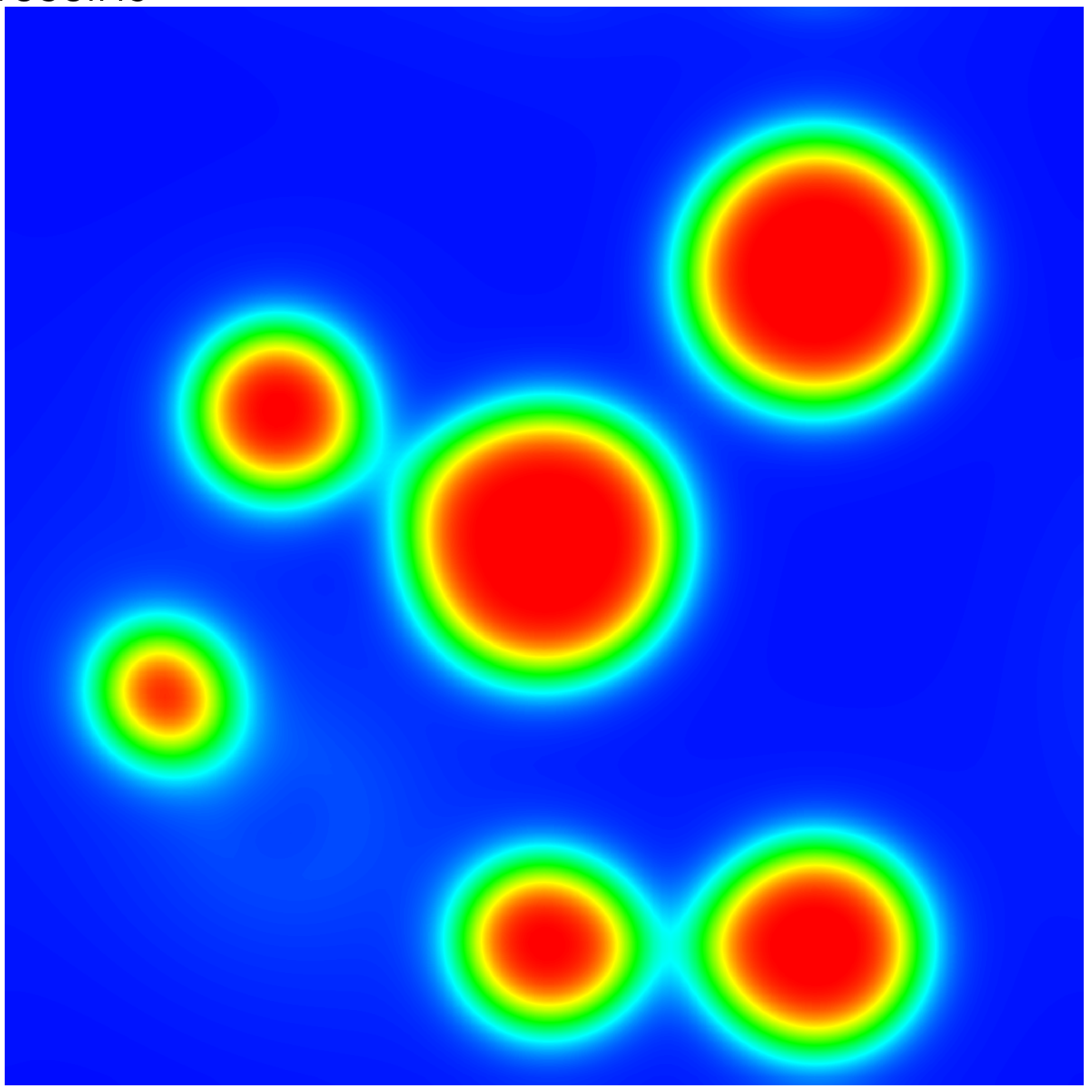}
 \includegraphics[width=0.2\textwidth]{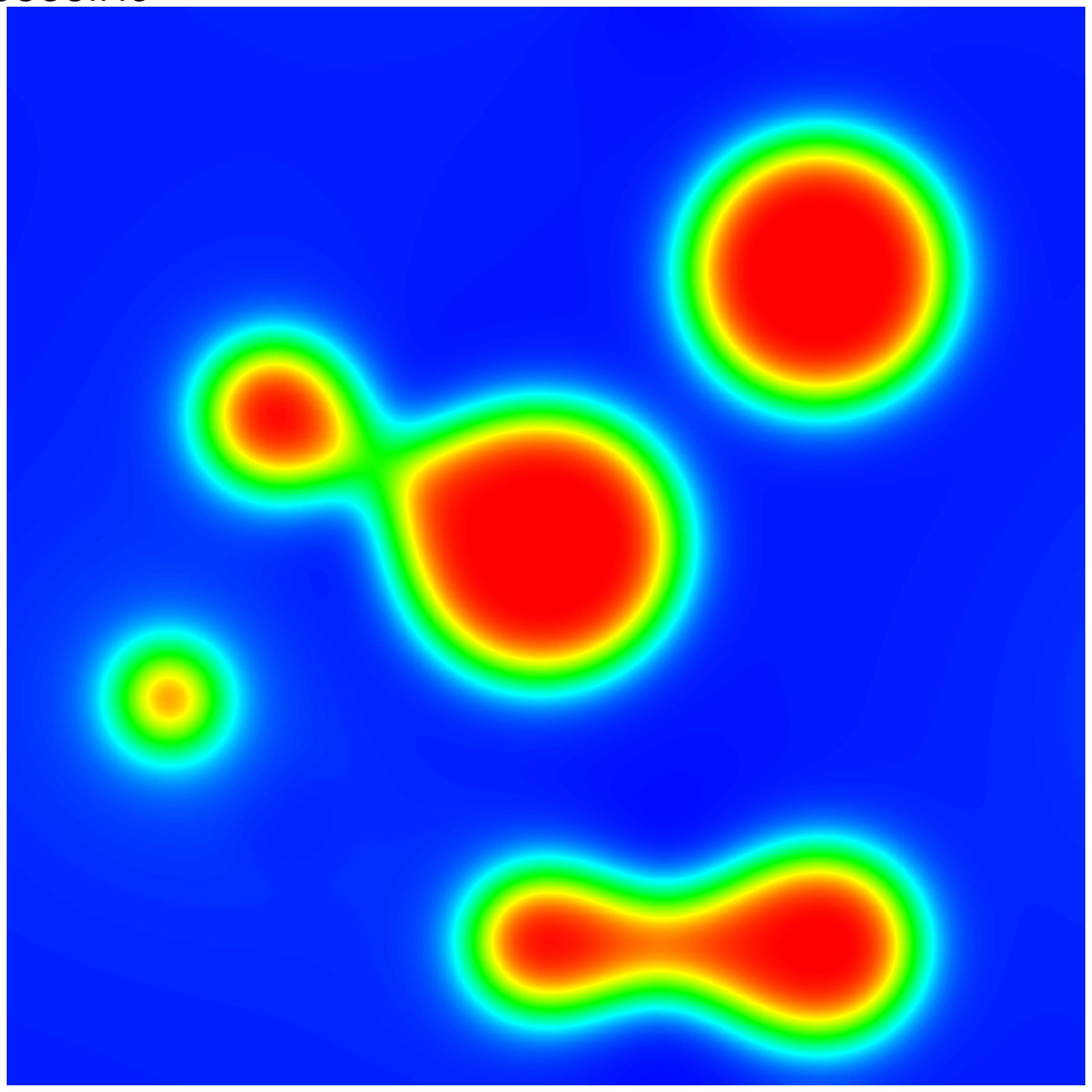}
  \includegraphics[width=0.2\textwidth]{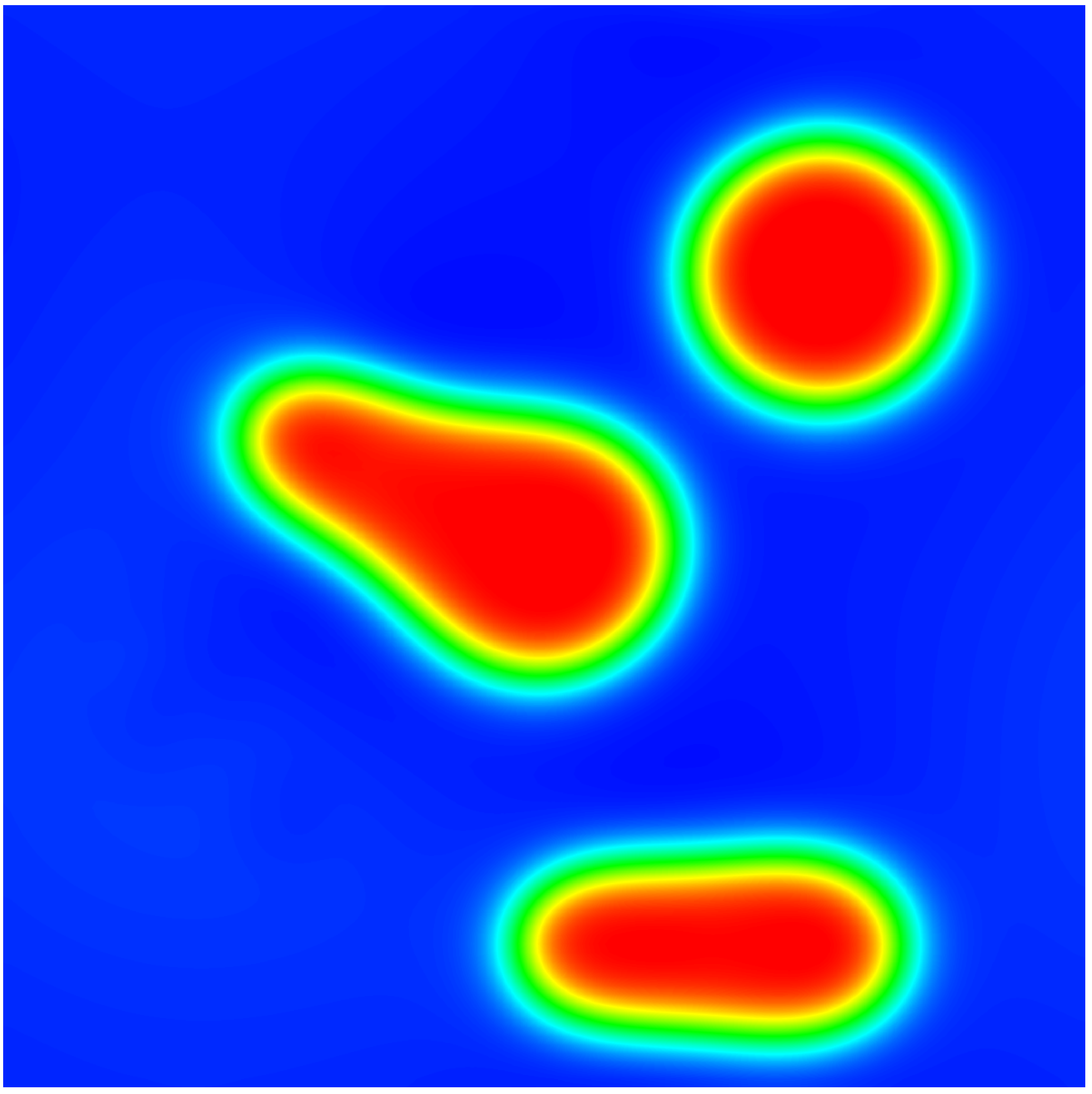}
   \includegraphics[width=0.2\textwidth]{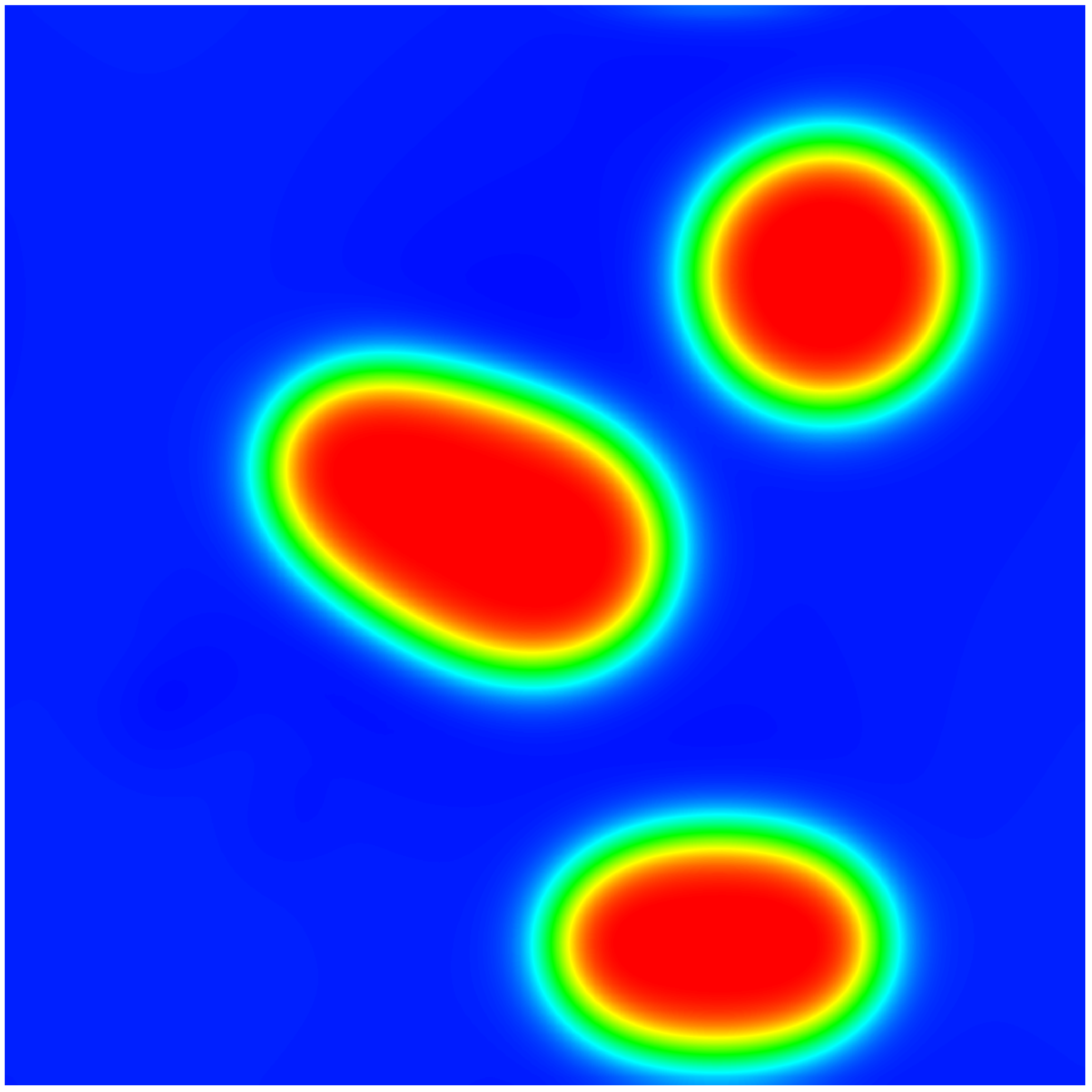}
}

\subfigure[Numerical solution from POD-ROM-II with r=15 at $t=10,30,50,90$]{
\includegraphics[width=0.2\textwidth]{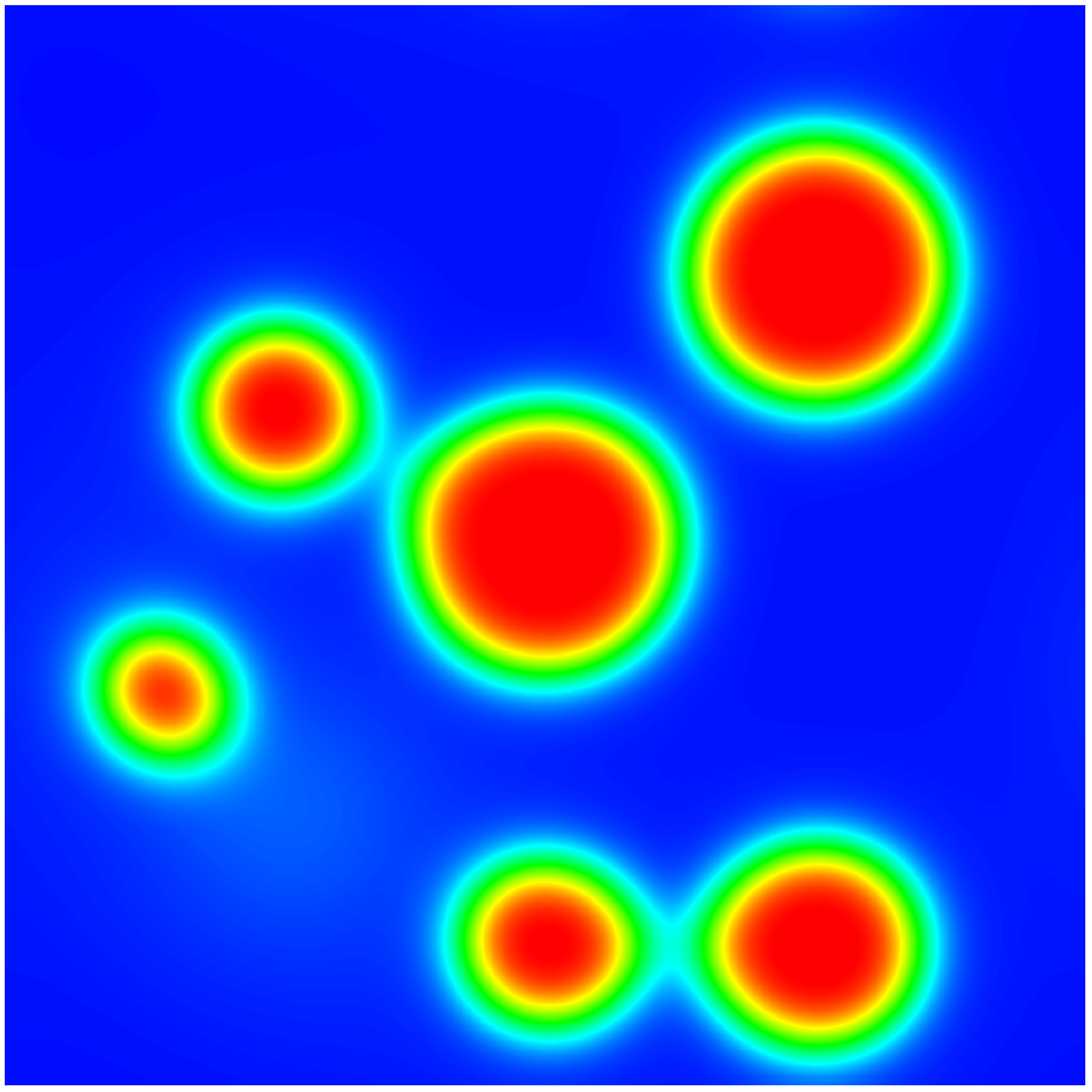}
 \includegraphics[width=0.2\textwidth]{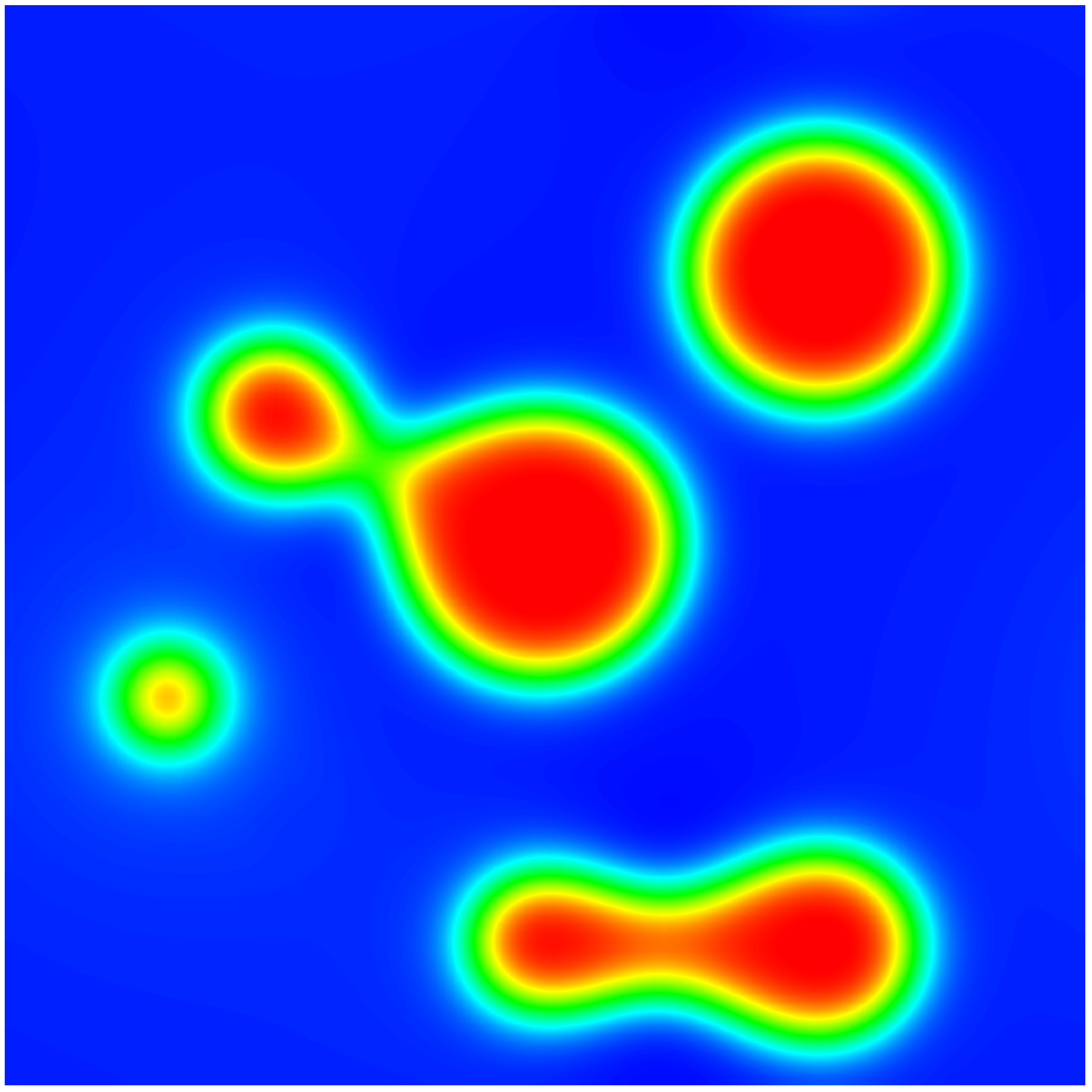}
  \includegraphics[width=0.2\textwidth]{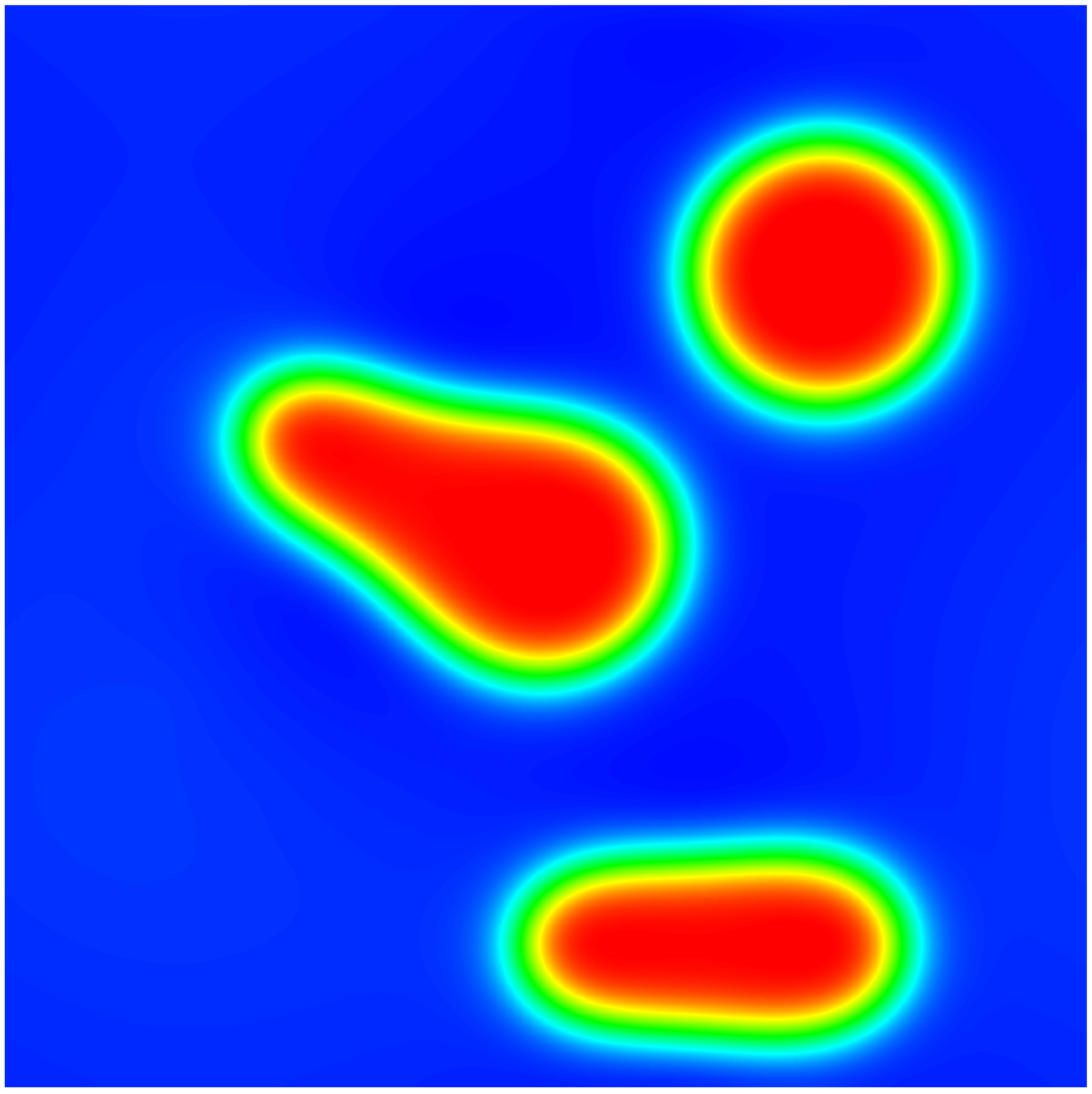}
   \includegraphics[width=0.2\textwidth]{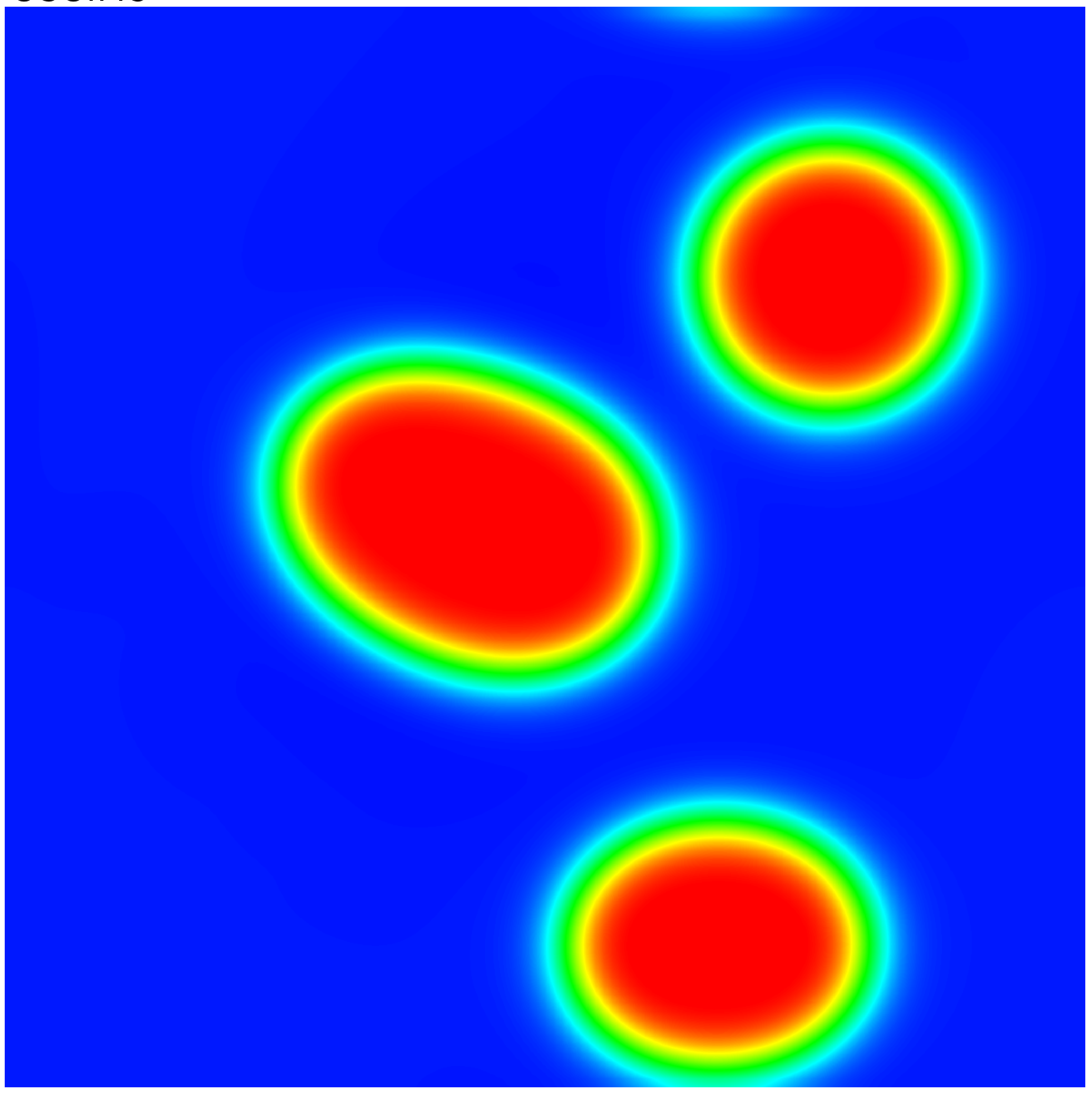}
}

\subfigure[Numerical solution from the full order model at $t=10,30,50,90$]{
\includegraphics[width=0.2\textwidth]{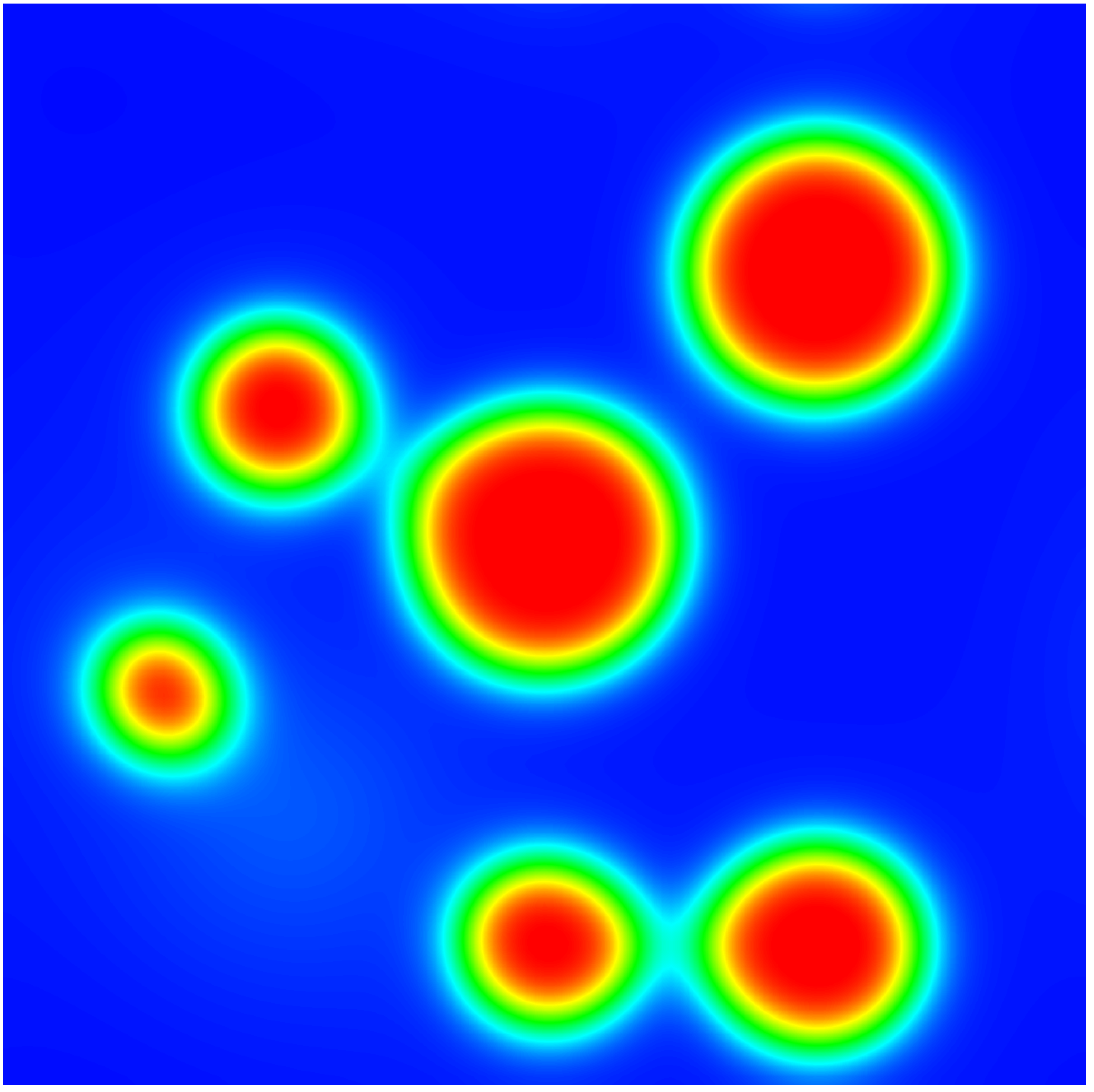}
 \includegraphics[width=0.2\textwidth]{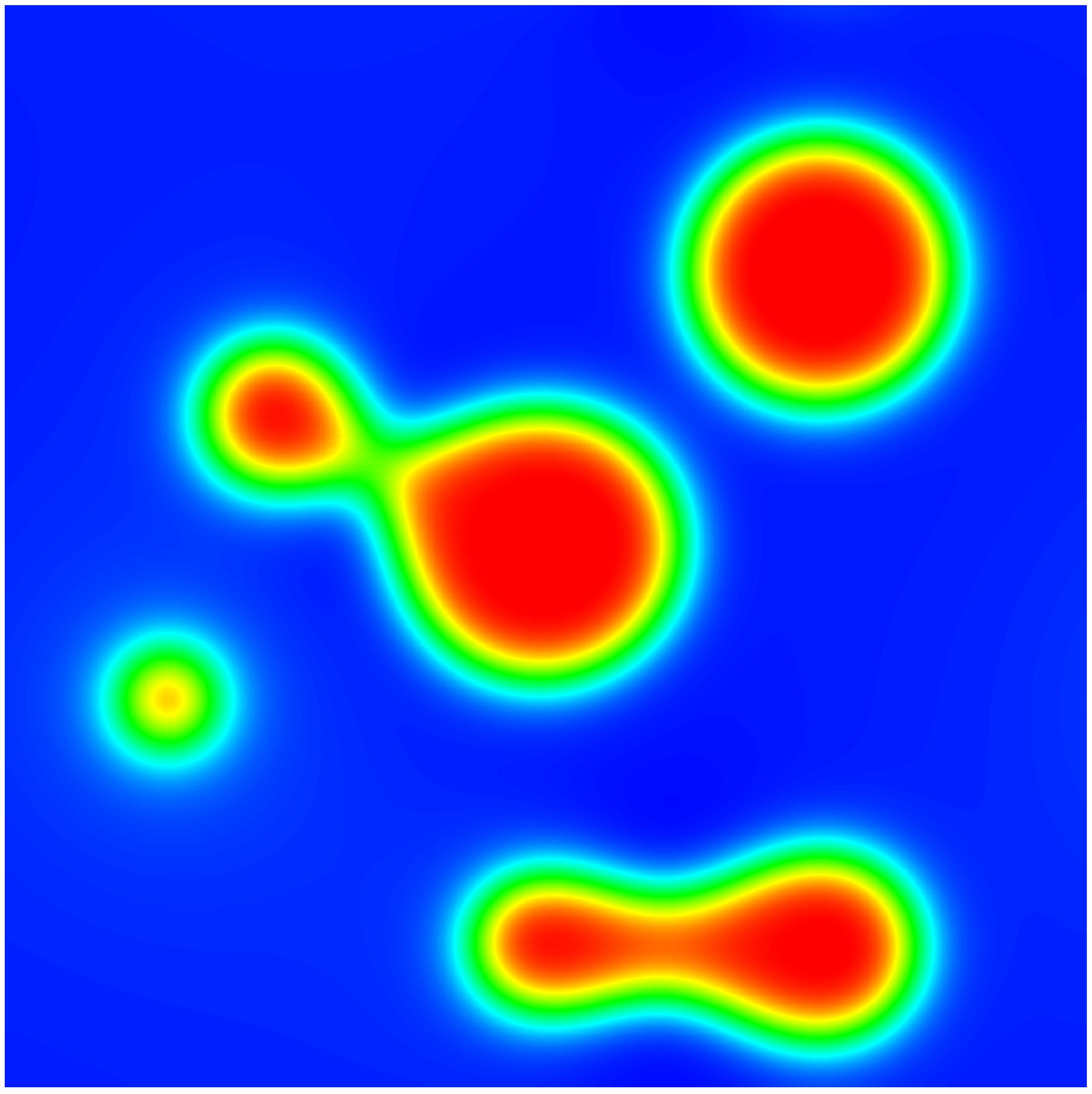}
  \includegraphics[width=0.2\textwidth]{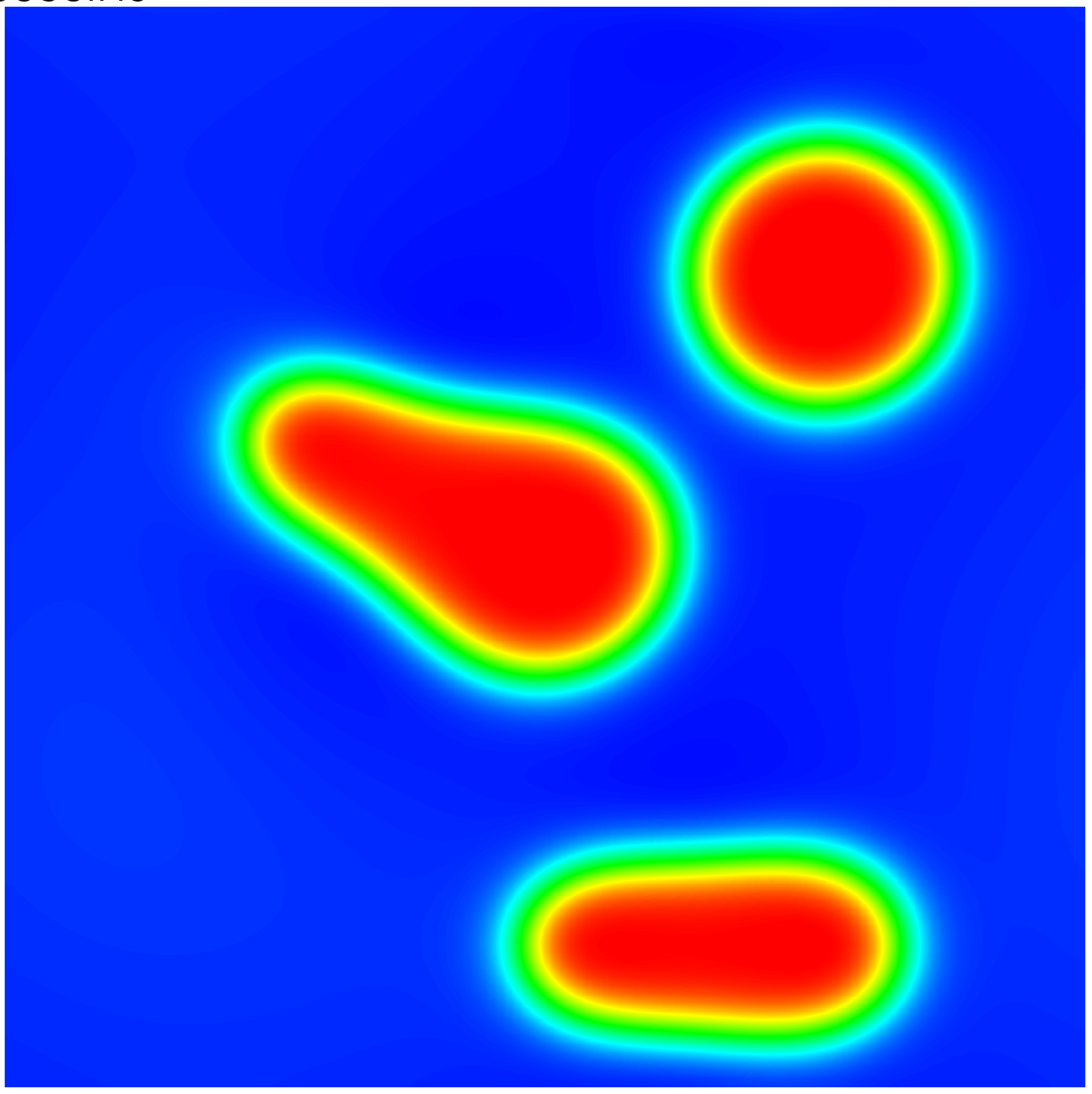}
   \includegraphics[width=0.2\textwidth]{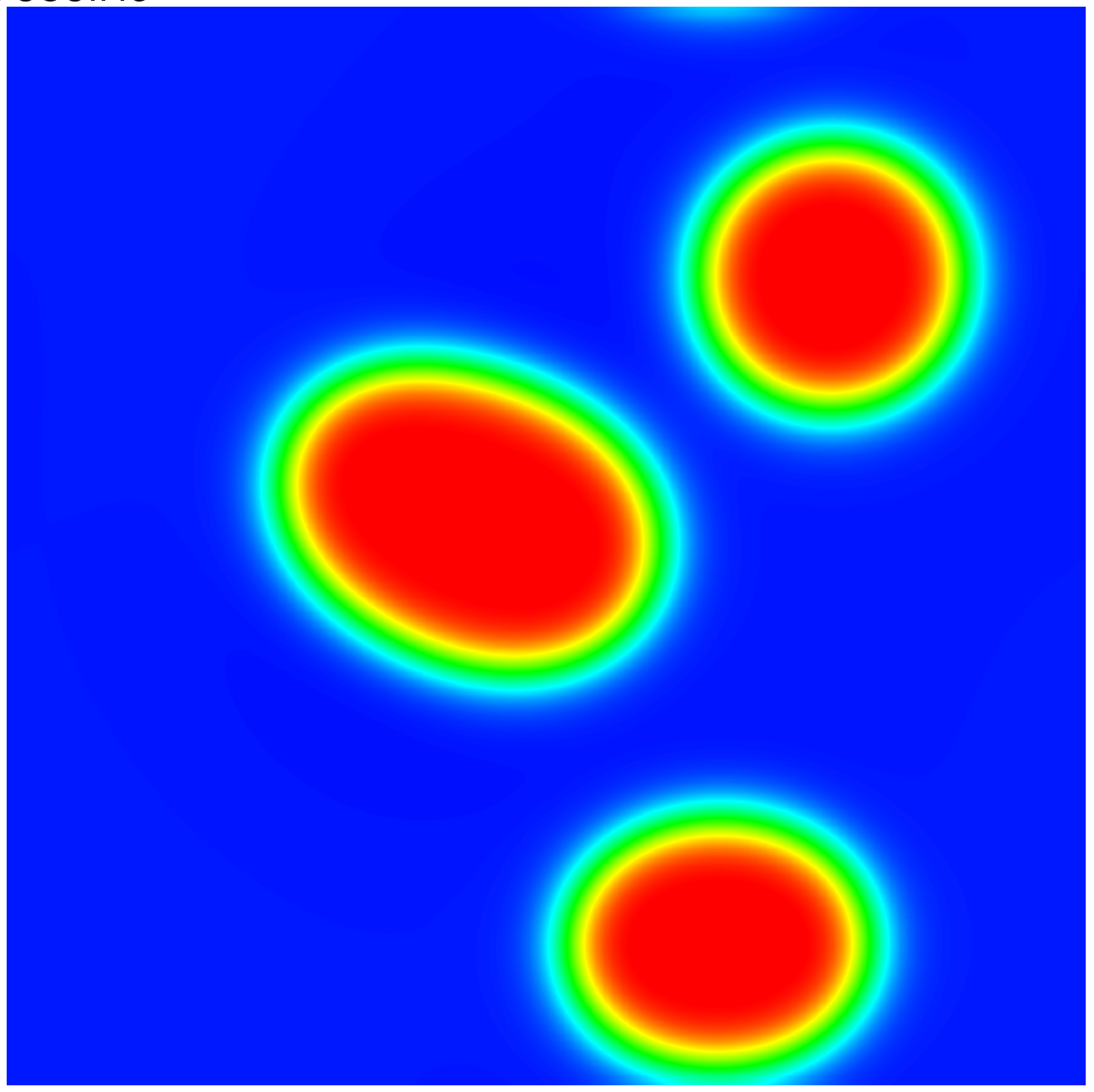}
}

\caption{A comparison between the numerical solutions of the Cahn-Hilliard equation from the full order model and the numerical solutions from the POD-ROM-II with various numbers of modes.}
\label{fig:CH-Example1-Compare}
\end{figure}

Furthermore, we have summarized the results of the energy dissipation for POD-ROM-II using different modes in Figure \ref{fig:CH-Energy-Compare}(a). It illustrates that the reduced order model with $r=15$ mode can accurately capture the Cahn-Hilliard equation's energy dissipation. Additionally, from Figure \ref{fig:CH-Energy-Compare}(b), we observe that POD-ROM-II provides a more accurate prediction than POD-ROM-I for energy dissipation.

\begin{figure}[H]
\center
\subfigure[]{\includegraphics[width=0.45\textwidth]{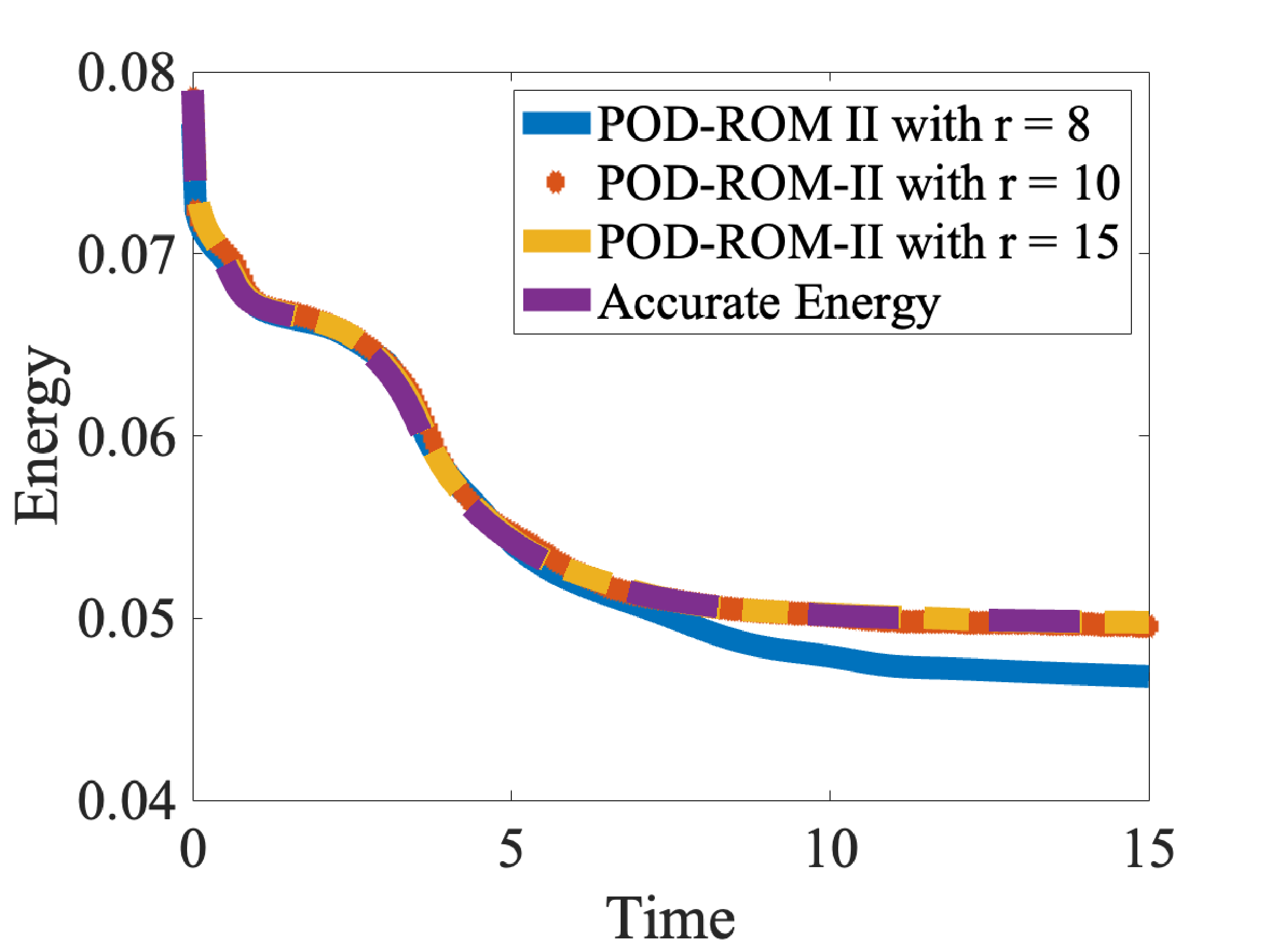}}
\subfigure[]{\includegraphics[width=0.45\textwidth]{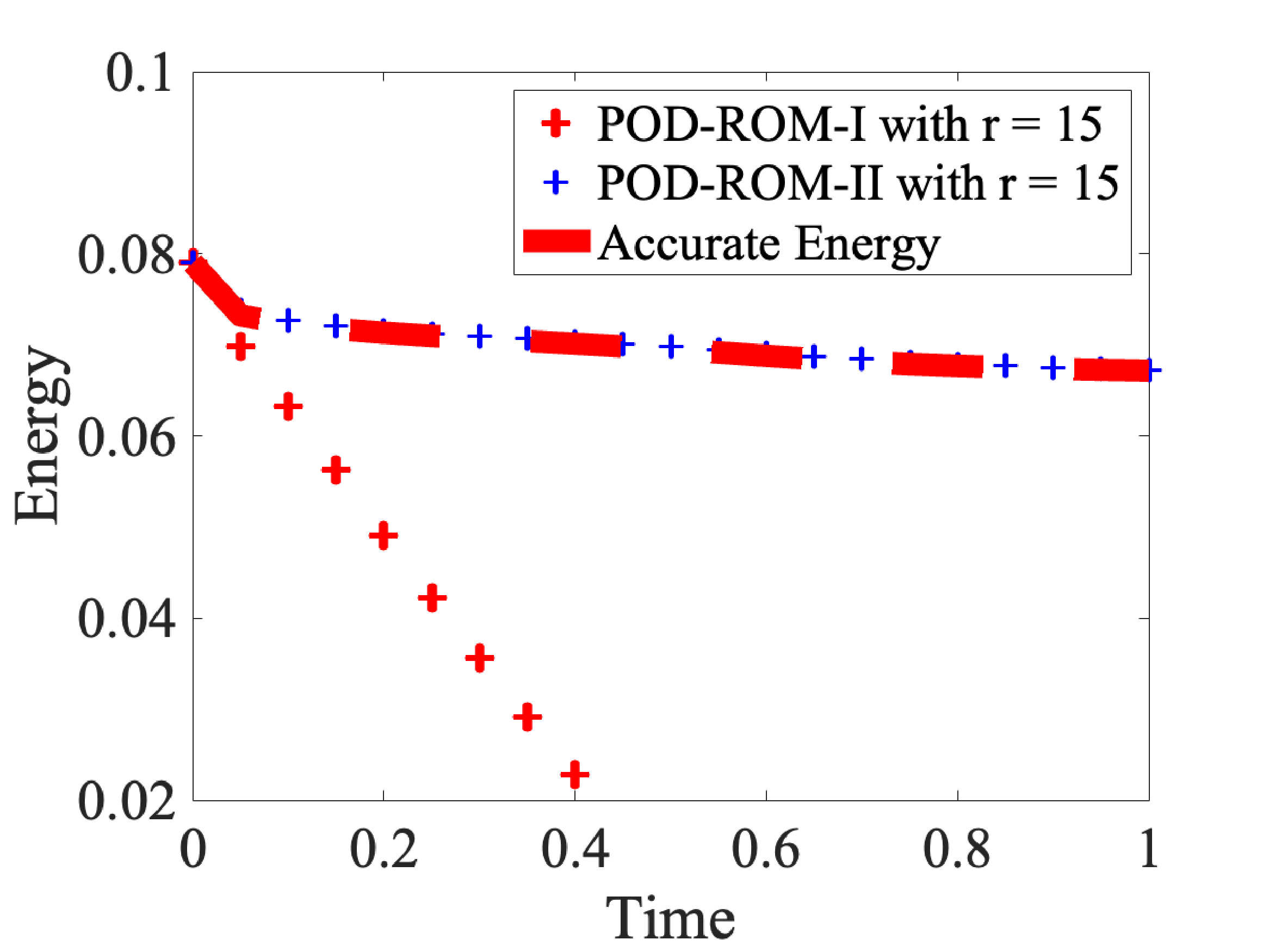}}
\caption{A comparison of the energy dissipation for the Cahn-Hilliard equation. (a) A comparison of the energy dissipation results between the full order model and the POD-ROM-II with various modes using Scheme \ref{sch:Relaxed-POD-ROM-CN}; (b) A comparison of the energy dissipation results between POD-ROM-I using Scheme \ref{sch:Relaxed-POD-ROM-CN-I} and POD-ROM-II using Scheme \ref{sch:Relaxed-POD-ROM-CN} }
\label{fig:CH-Energy-Compare}
\end{figure}

\subsection{Phase field crystal model}
Next, we investigate the phase field crystal (PFC) model with our proposed reduced-order model techniques. The PFC model reads as
\beq
\partial_t \phi = M\Delta \Big[ (a_0 + \Delta)^2 \phi + f'(\phi)\Big],
\eeq 
where $f(\phi) = \frac{1}{4} \phi^4 - \frac{b_0}{2}\phi^2$. Here $a_0$ and $b_0$ are model parameters. The PFC model can also be derived from the generalized Onsager principle with the Onsager triplet
$$
(\phi,\quad \cG, \quad \cE) := (\phi, \quad  -M\Delta, \quad \int_\Omega \Big[ \frac{1}{2}\phi (-b_0 + (a_0 + \Delta)^2 ) \phi  +\frac{1}{4}\phi^4 \Big] d\bx).
$$
Introduce the auxiliary variables
$$
q(\bx,t):= \frac{\sqrt{2}}{2}(\phi^2 - b_0 - \gamma_0), \quad g(\phi): = \frac{\partial q}{\partial \phi} = \sqrt{2} \phi, 
$$
the PFC model could be transformed as
\begin{subequations}
\begin{align}
& \partial_t \phi =  M\Delta \Big( (a_0+\Delta)^2 \phi + \gamma_0 \phi + g(\phi) q \Big), \\
& \partial_t q = g(\phi) \partial_t \phi.
\end{align}
\end{subequations}
Denote $\Psi = \begin{bmatrix}
\phi \\ q
\end{bmatrix}$, $\cG_s = -M\Delta $, $\cL_0 = (a_0+\Delta)^2 + \gamma_0$, $\cL = \begin{bmatrix}
\cL_0  &  0  \\
0 & \bI
\end{bmatrix}$ and $\cN_0 = \begin{bmatrix}
\bI  & g(\phi)
\end{bmatrix}$, such that the equation is written as
\beq
\partial_t \Psi = -\cN(\Psi) \cL  \Psi, \quad \mbox{ where } \quad \cN(\Psi) = \cN_0^T\cG_s\cN_0 .
\eeq 
Hence, the POD-ROM method introduced in previous sections can be directly applied.

In the numerical example, we choose $a_0=1$, $b_0=0.325$, $L_x=L_y=100$, $N_x=N_y=128$, $\gamma_0=1$, $M=1$, and $T=100$. We choose the domain $\Omega=[0, L_x] \times [0, L_y]$. We use $\delta t=10^{-3}$. We initialize three dots in the domain as the initial profile, following the strategy in \cite{Vignal2015}.   The singular value distribution is summarized in Figure \ref{fig:PFC-SVD}.

\begin{figure}[H]
\centering
\includegraphics[width=0.85\textwidth]{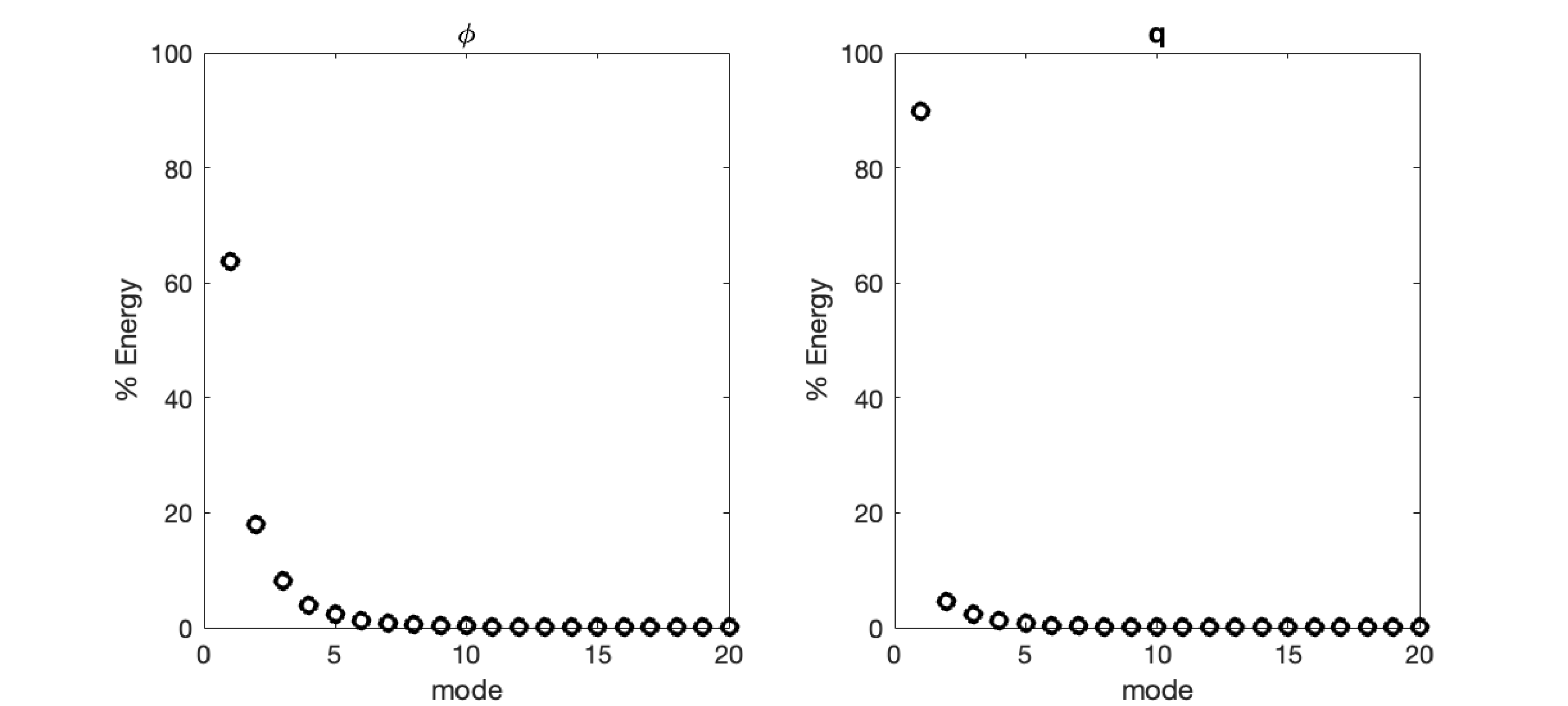}
\caption{Singular value distributions for the data collected from the phase field crystal equation.}
\label{fig:PFC-SVD}
\end{figure}

The results are summarized in Figure \ref{fig:PFC-Example-Compare}. We can observe that with $r=4$ modes, the ROM can already capture the dynamics properly. Furthermore, with $r=8$ modes, the ROM model captures the dynamics accurately.

\begin{figure}[H]
\centering

\subfigure[Numerical solution from POD-ROM-II with r=4 at $t=10,30,50,90$]{
\includegraphics[width=0.2\textwidth]{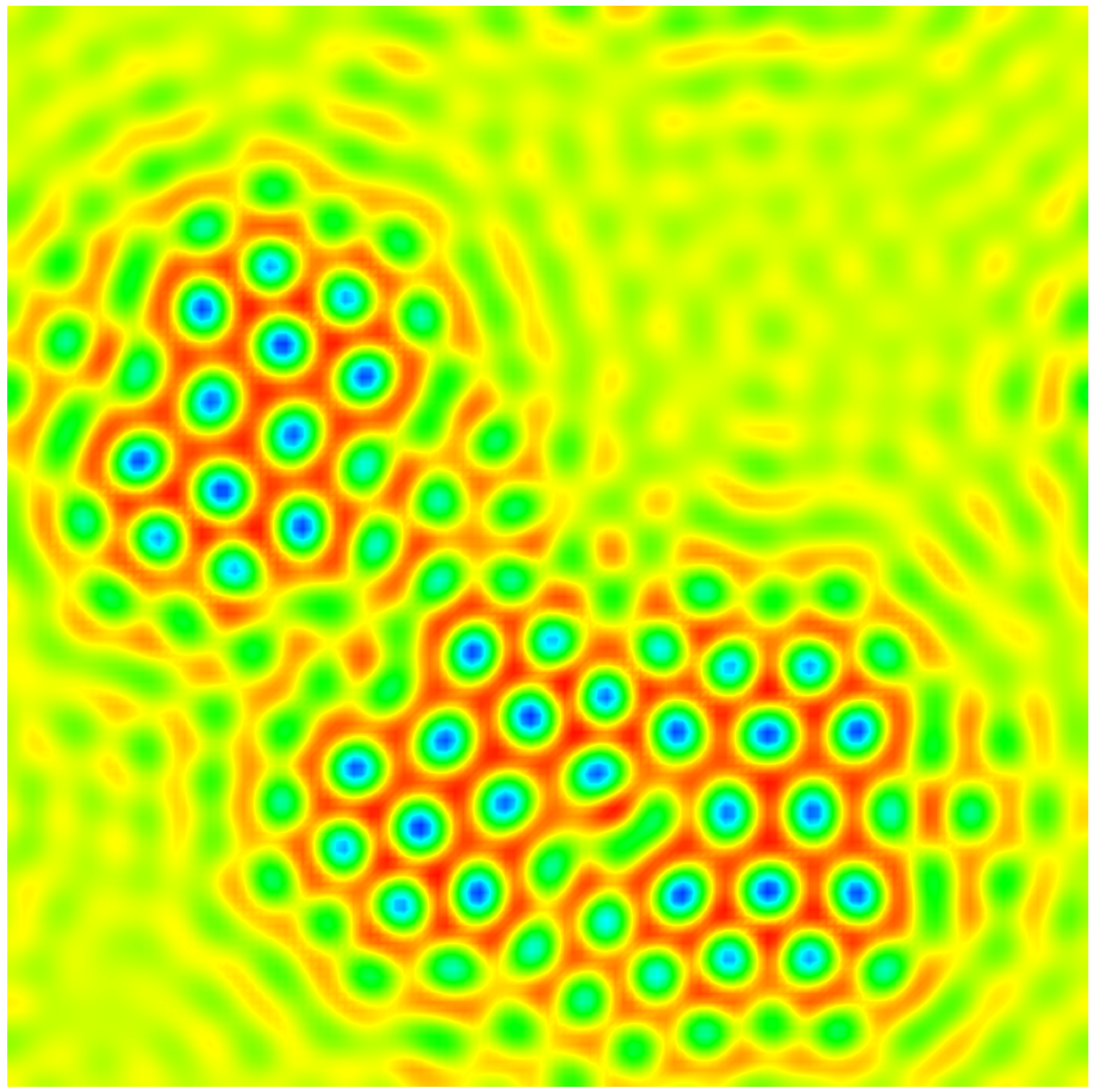}
 \includegraphics[width=0.2\textwidth]{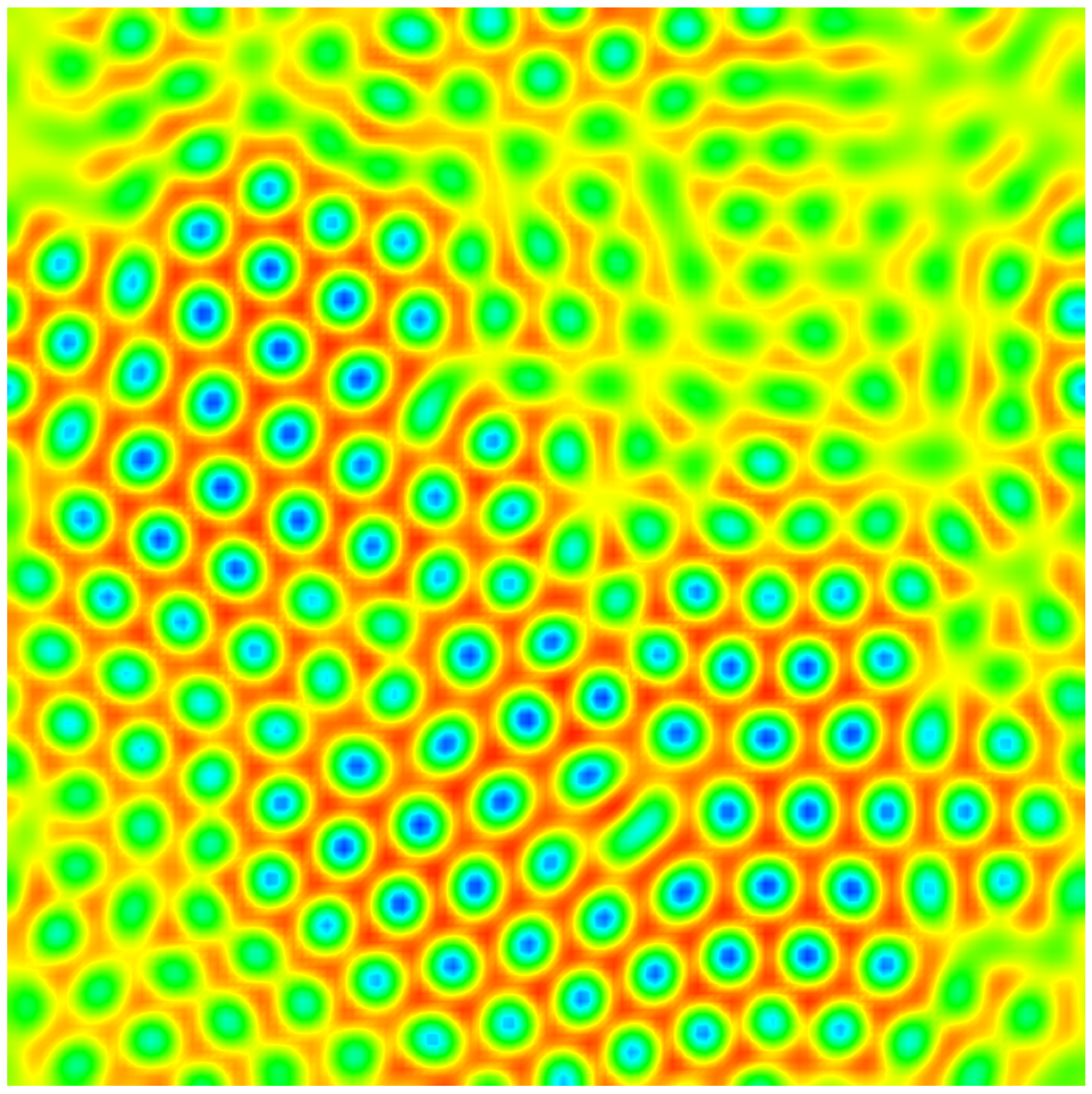}
  \includegraphics[width=0.2\textwidth]{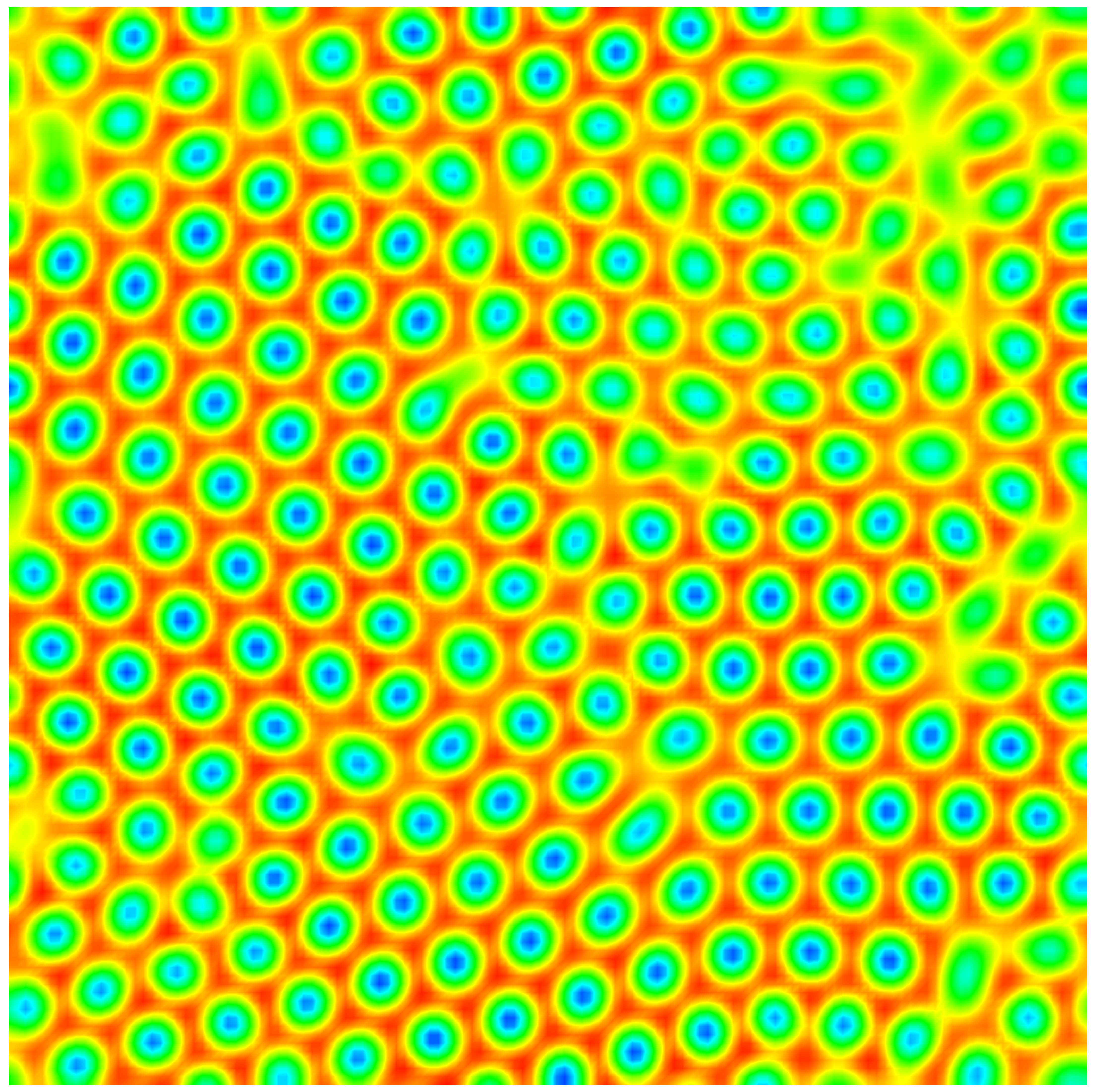}
   \includegraphics[width=0.2\textwidth]{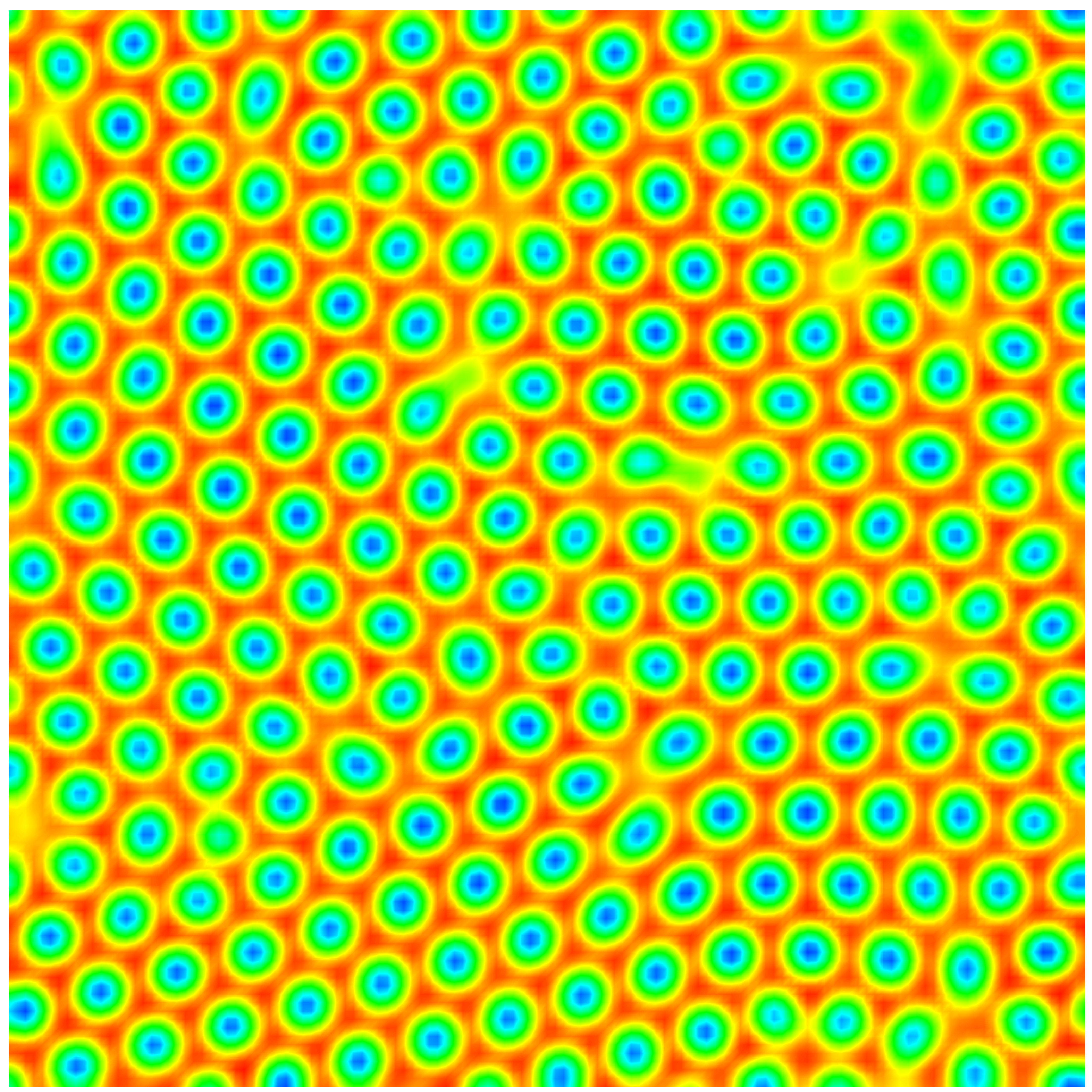}
}

\subfigure[Numerical solution from POD-ROM-II with r=8 at $t=10,30,50,90$]{
\includegraphics[width=0.2\textwidth]{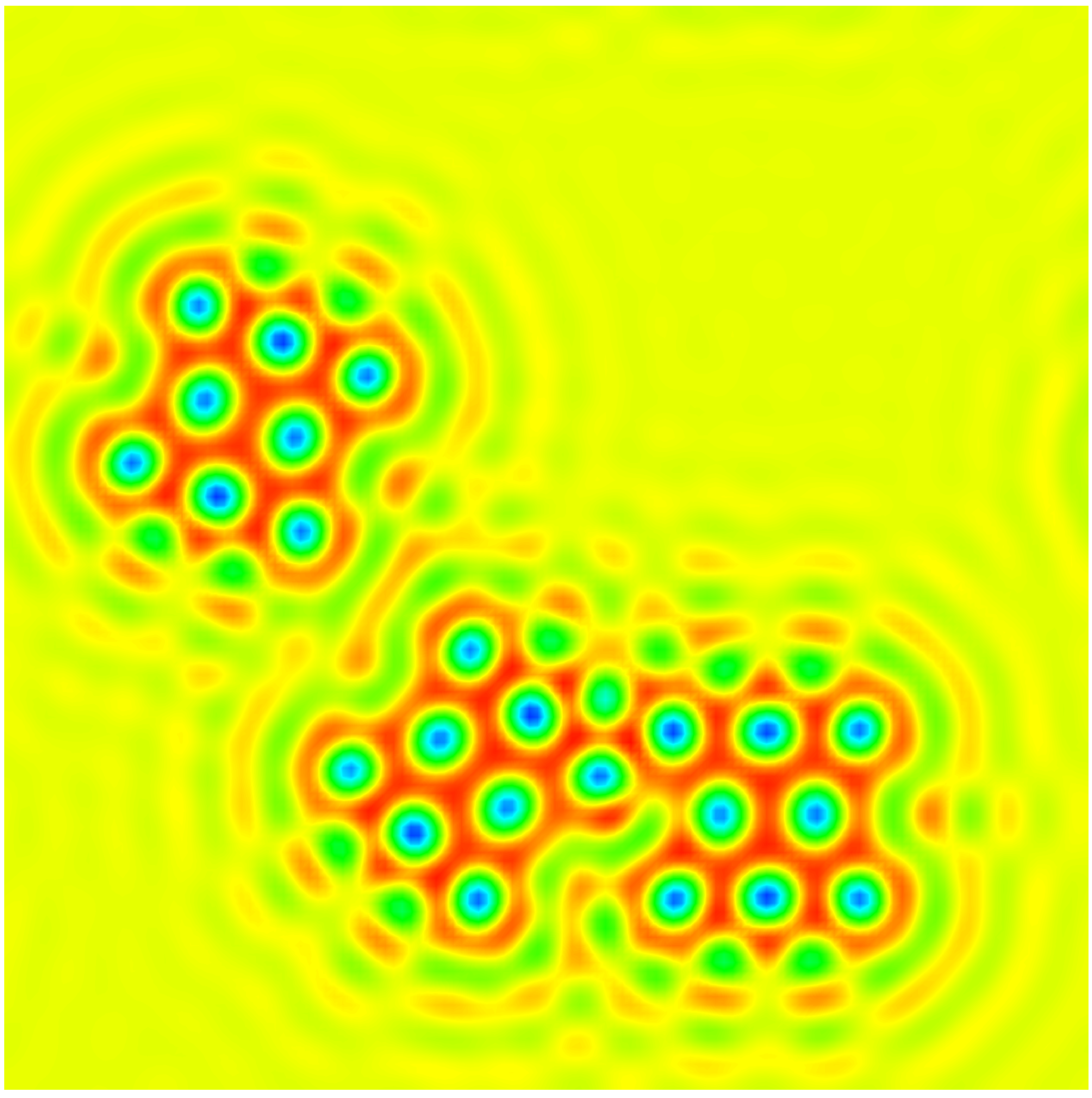}
 \includegraphics[width=0.2\textwidth]{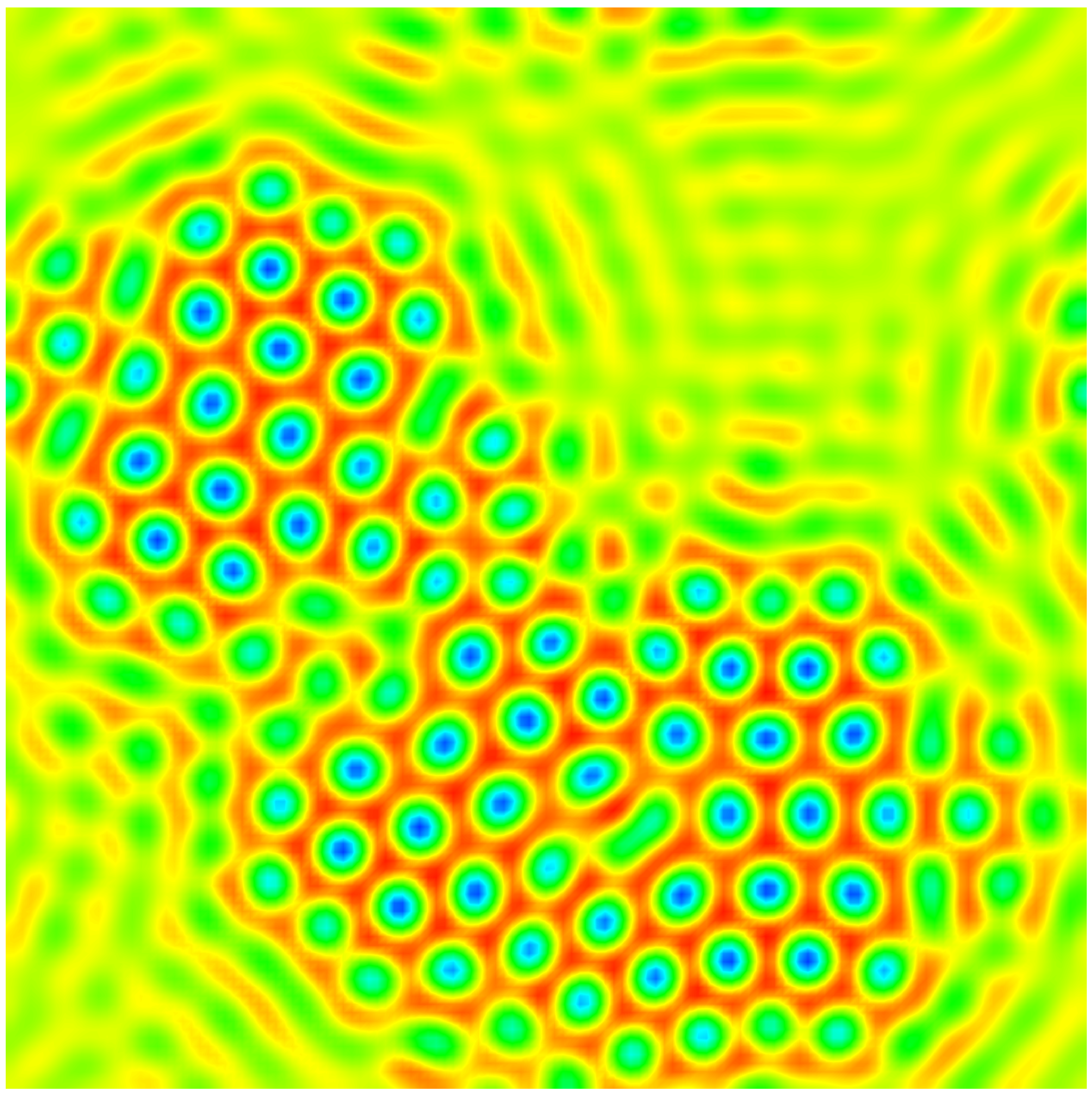}
  \includegraphics[width=0.2\textwidth]{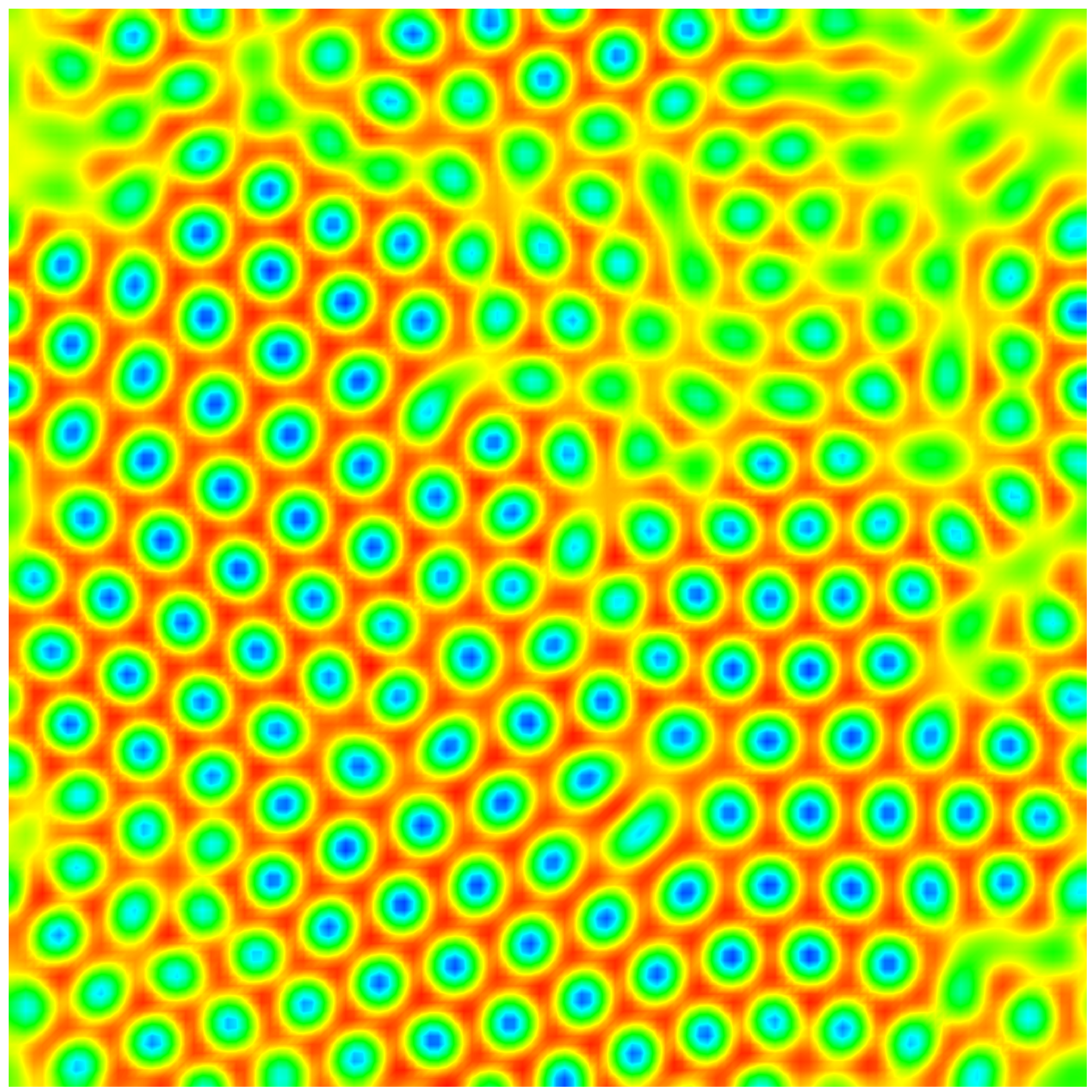}
   \includegraphics[width=0.2\textwidth]{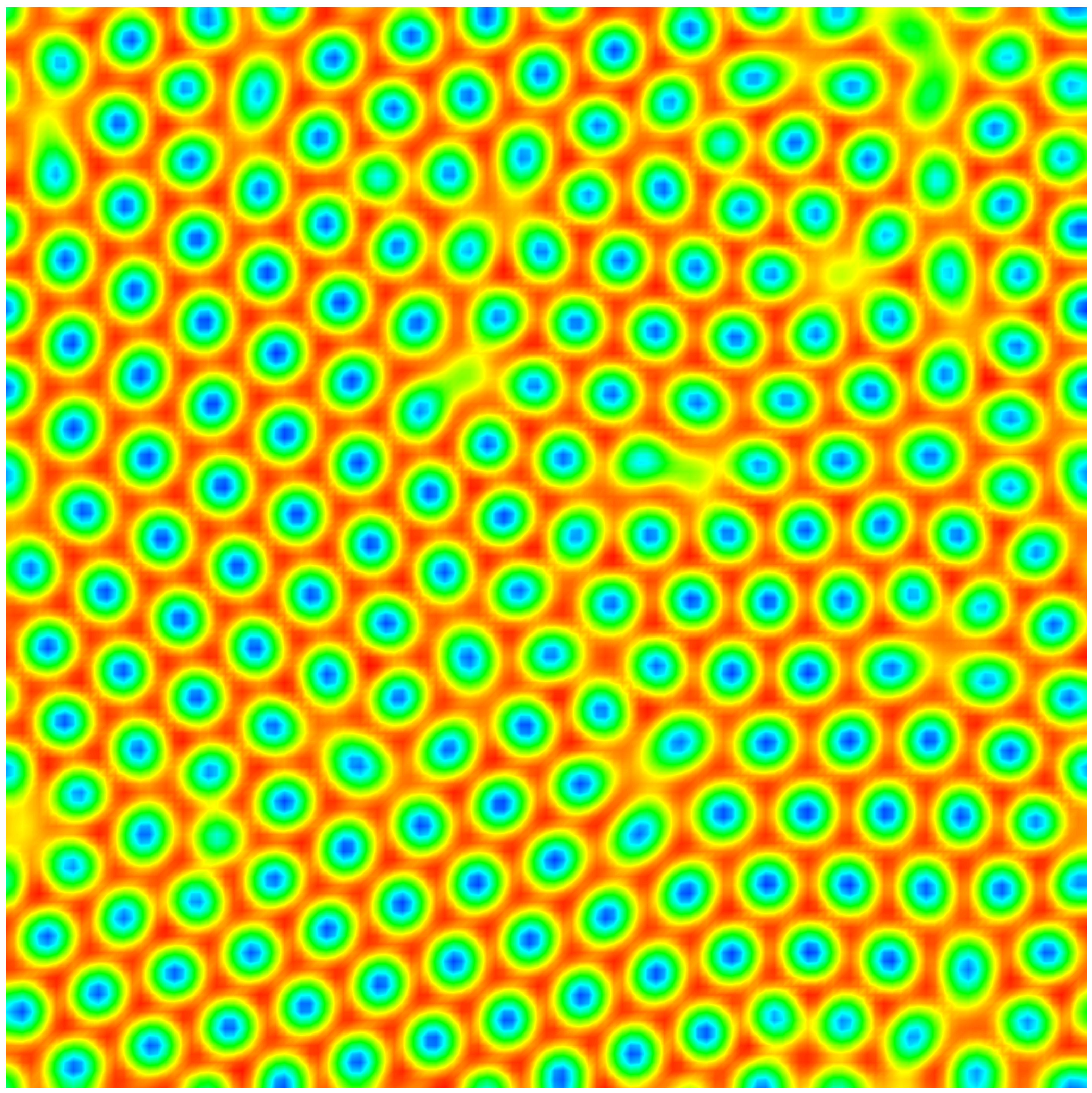}
}

\subfigure[Numerical solution from the full order model at $t=10,30,50,90$]{
\includegraphics[width=0.2\textwidth]{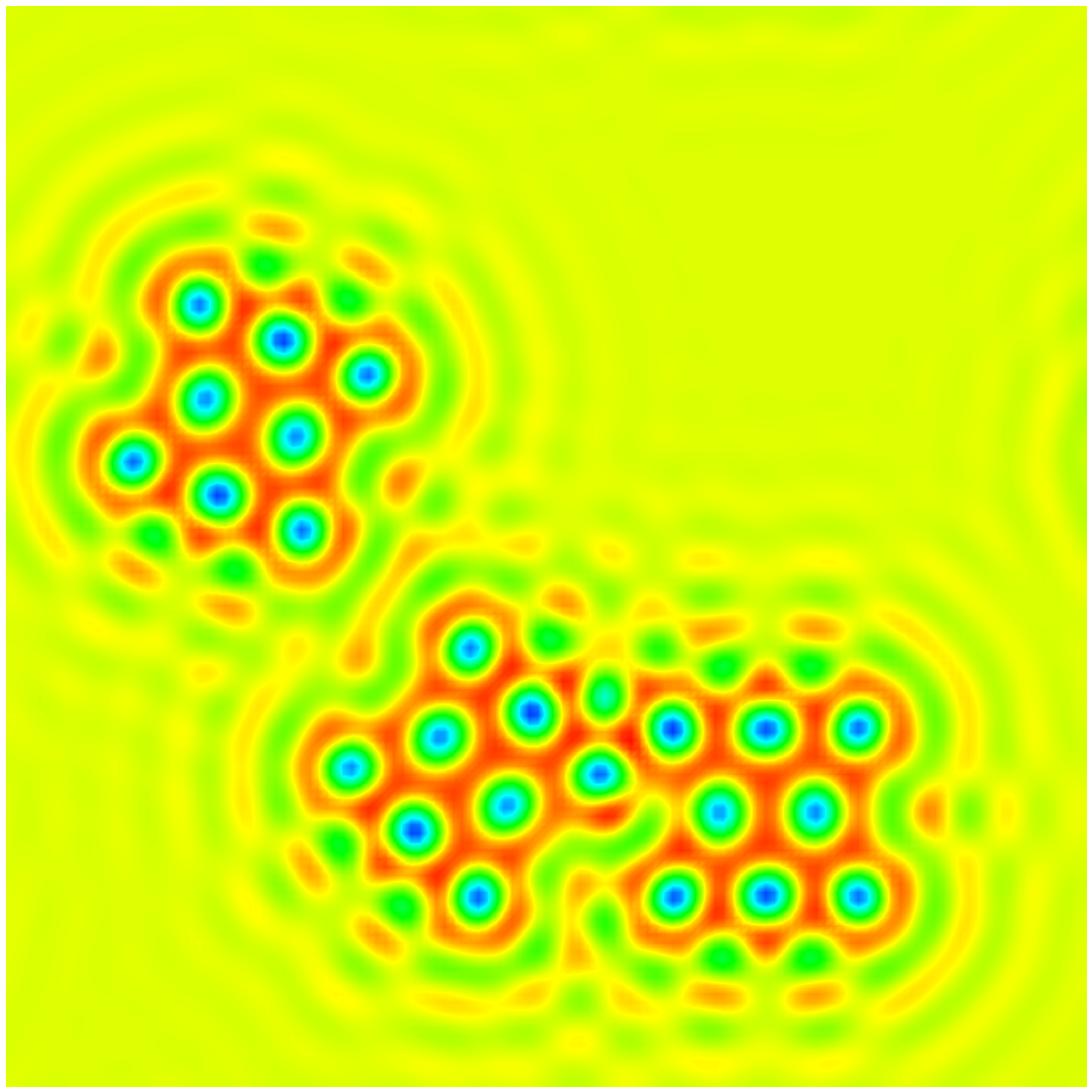}
 \includegraphics[width=0.2\textwidth]{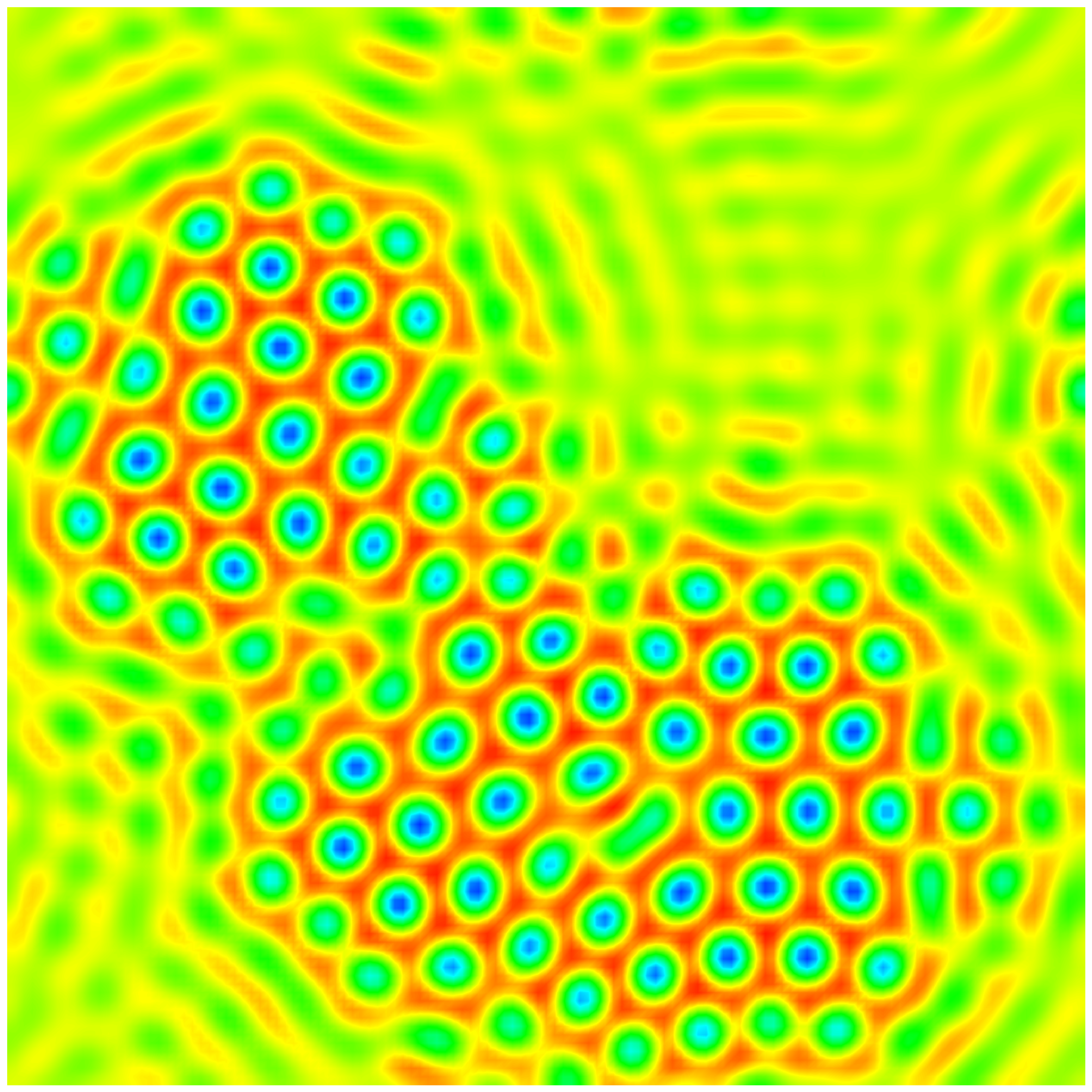}
  \includegraphics[width=0.2\textwidth]{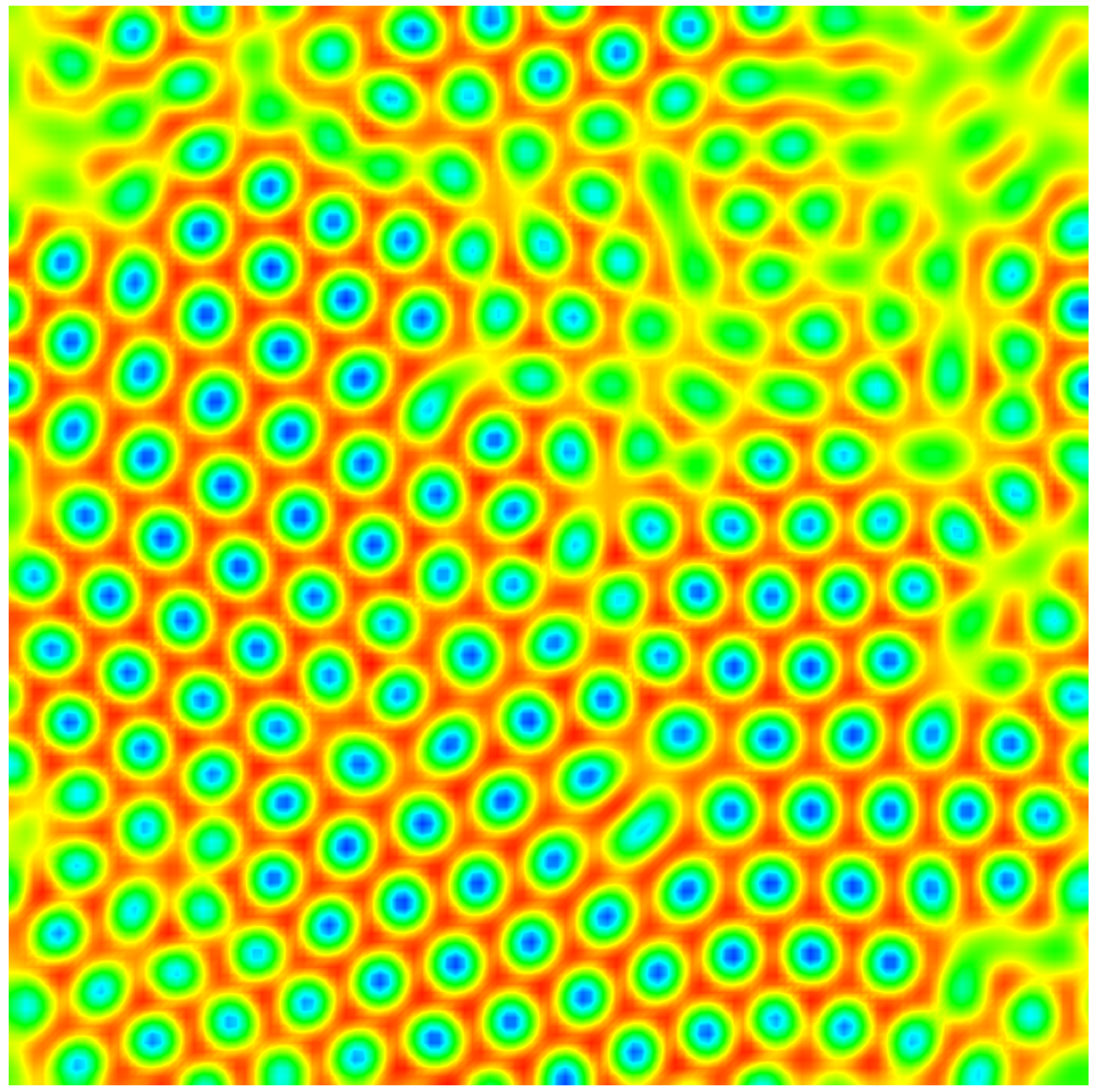}
   \includegraphics[width=0.2\textwidth]{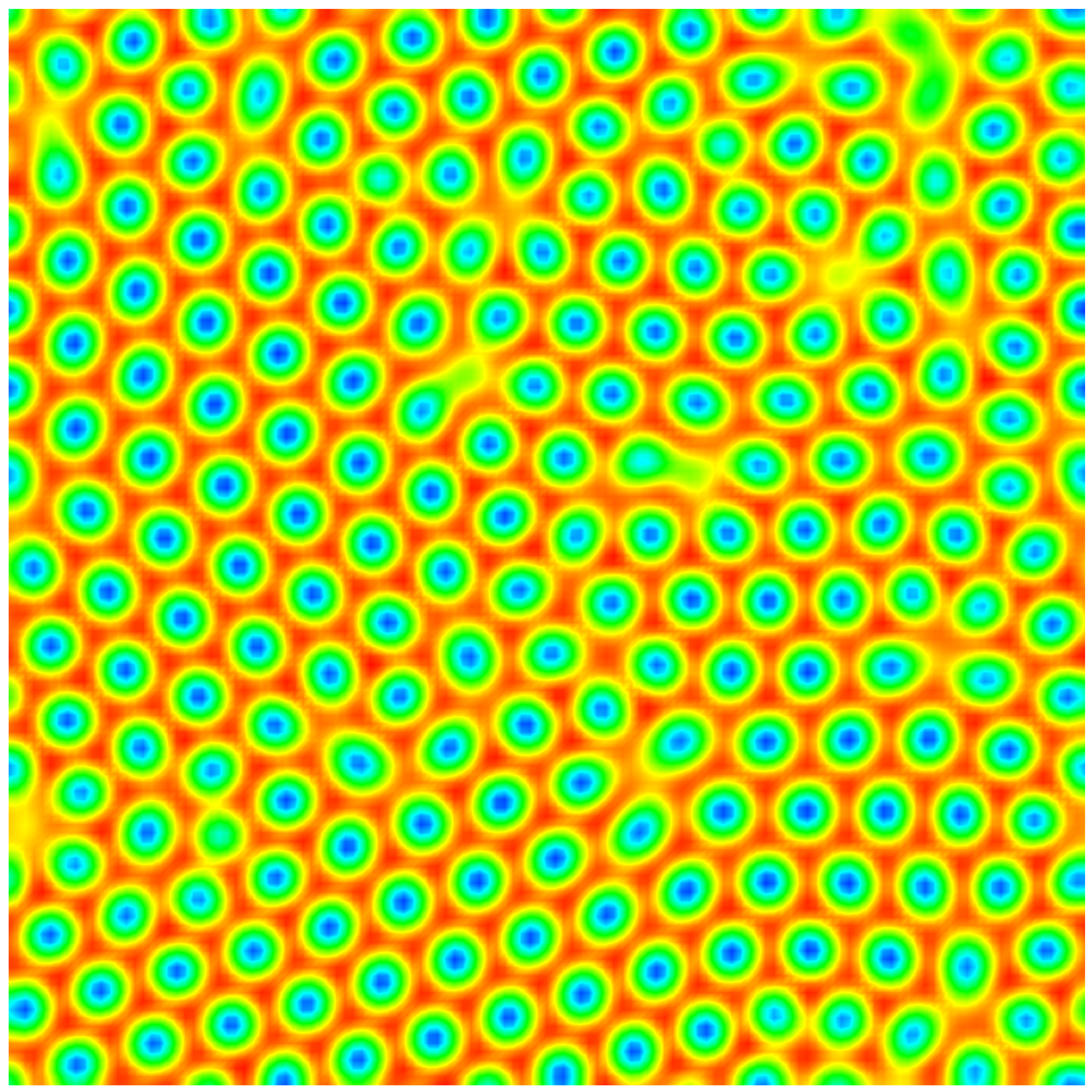}}

\caption{A comparison between the numerical solutions from the full model and the numerical solutions from POD-ROM-II for the phase field crystal equations with various numbers of modes.}
\label{fig:PFC-Example-Compare}
\end{figure}

Then, we compare the energy dissipation curves for POD-ROM-II with different modes, as shown in Figure \ref{fig:PFC-Energy-Compare}(a). It appears that the POD-ROM-II can accurately predict the energy evolution with $r=10$ modes already. Furthermore, we compare the results between POD-ROM-I and POD-ROM-II shown in Figure \ref{fig:PFC-Energy-Compare}(b), and it appears that POD-ROM-II can provide a more accurate prediction for the energy evolution than POD-ROM-I with the same modes. This is reasonable since POD-ROM-I has a modified energy dissipation rate.

\begin{figure}[H]
\center
\subfigure[]{\includegraphics[width=0.45\textwidth]{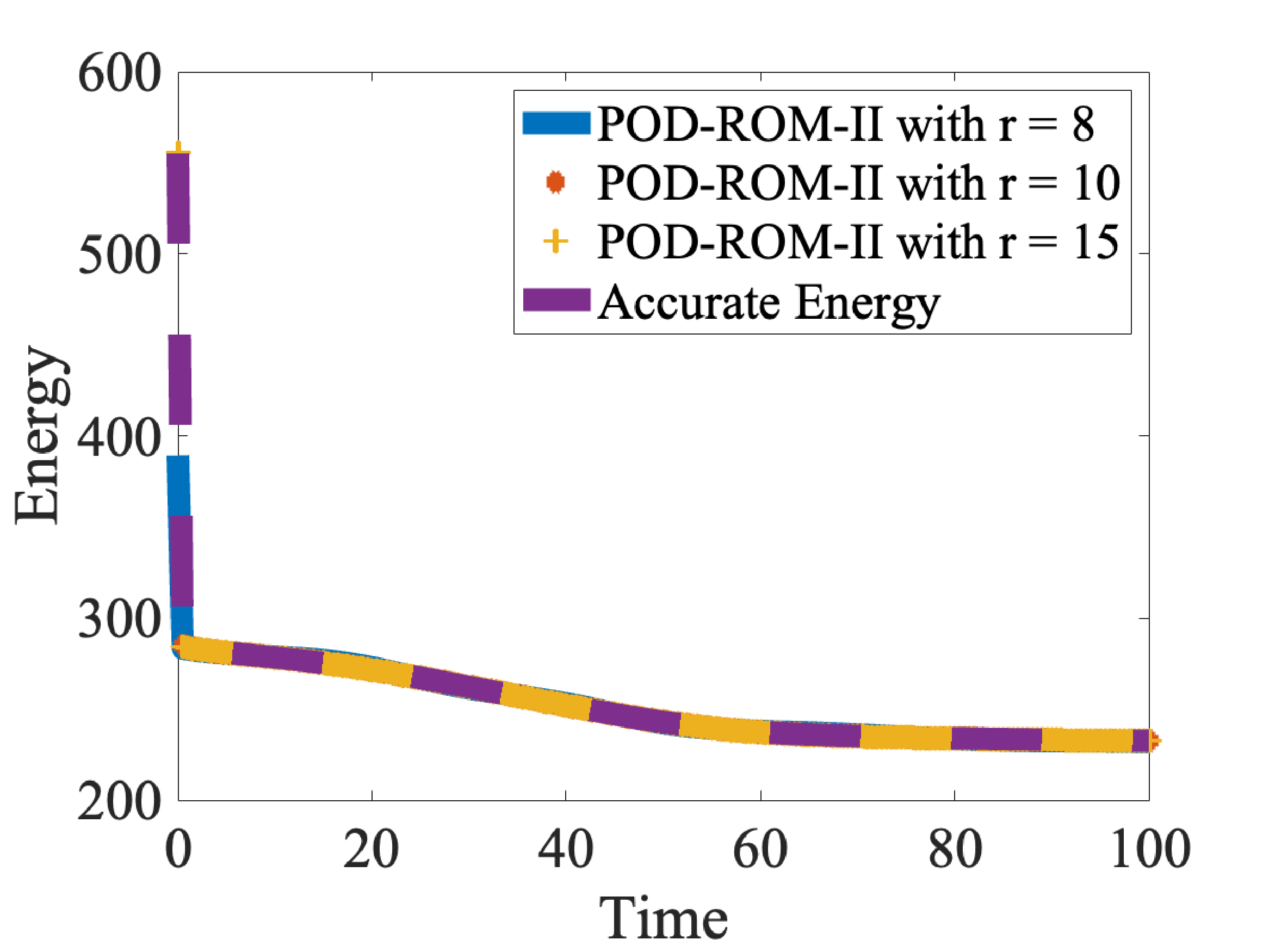}}
\subfigure[]{\includegraphics[width=0.45\textwidth]{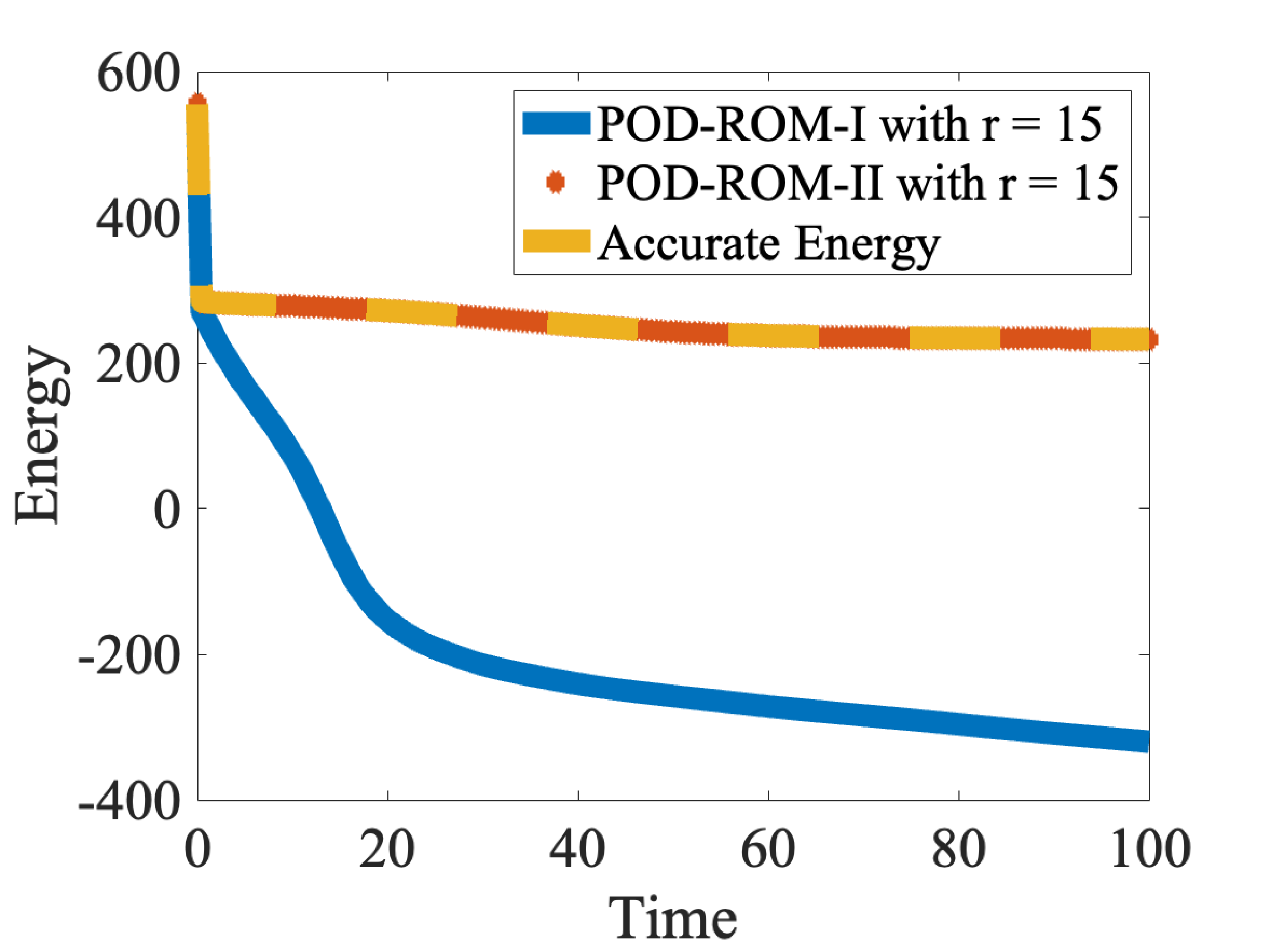}}
\caption{A comparison of the energy dissipation for the phase field crystal equation. (a) A comparison of the energy dissipation results between the full order model and the POD-ROM-II with various modes using Scheme \ref{sch:Relaxed-POD-ROM-CN}; (b) A comparison of the energy dissipation results between POD-ROM-I using Scheme \ref{sch:Relaxed-POD-ROM-CN-I} and POD-ROM-II using Scheme \ref{sch:Relaxed-POD-ROM-CN} }
\label{fig:PFC-Energy-Compare}
\end{figure}

\section{Conclusion and future work}
In this paper, we introduce a general numerical framework to develop structure-preserving reduced order models (ROMs) for thermodynamically consistent reversible-irreversible PDEs. Our framework is rather general in that it provides a unified approach to develop structure-preserving reduced order models for PDE systems that uphold the free energy dissipation laws. An extension of the current approach to investigate models with thermodynamic PDEs with temperature is possible. 
Meanwhile, there are still several open questions. For instance, the maximum principle for the Allen-Cahn equation is well-studied. How to derive reduced order model to preserve the maximum principle while preserving the energy dissipation law is still unclear. Additionally, for some models, mass conservation is essential, saying the Cahn-Hilliard equation, which is the main reason making it different from the Allen-Cahn equation. The current framework can't guarantee mass conservation and energy dissipation simultaneously. These open questions are to be addressed in our subsequent works.

\section*{Acknowledgments}
Zengyan Zhang and Jia Zhao would like to acknowledge the support from the National Science Foundation with grant NSF-DMS-2111479. They would also like to acknowledge NVIDIA Corporation for the donation of GPUs for conducting some of the numerical simulations in this paper.


\end{document}